\documentclass[11pt,reqno]{amsart}
\usepackage[utf8]{inputenc} 
\usepackage[T1]{fontenc}    
\usepackage{lmodern}  

\usepackage{graphics,color}
\usepackage{amssymb, amsmath, amsthm, amscd}
\usepackage{latexsym, verbatim, graphicx, amsfonts}
\usepackage[mathscr]{euscript}
\usepackage{amsmath, amsthm, amssymb}
\usepackage{dsfont}
\usepackage{enumitem}
\usepackage{mathrsfs}
\usepackage{mathtools}
\usepackage{float}

\usepackage{xcolor}                 
\usepackage[colorlinks=true, linkcolor=blue, citecolor=black, urlcolor=black]{hyperref}           

\usepackage{aliascnt}
\usepackage[nameinlink,noabbrev]{cleveref}

\theoremstyle{plain}
\newtheorem{theorem}{Theorem}[section]

\newaliascnt{lemma}{theorem}
\newtheorem{lemma}[lemma]{Lemma}
\aliascntresetthe{lemma}
\crefname{lemma}{lemma}{lemmas}
\Crefname{lemma}{Lemma}{Lemmas}

\newaliascnt{proposition}{theorem}
\newtheorem{proposition}[proposition]{Proposition}
\aliascntresetthe{proposition}
\crefname{proposition}{proposition}{propositions}
\Crefname{proposition}{Proposition}{Propositions}

\newaliascnt{corollary}{theorem}
\newtheorem{corollary}[corollary]{Corollary}
\aliascntresetthe{corollary}
\crefname{corollary}{corollary}{corollaries}
\Crefname{corollary}{Corollary}{Corollaries}

\theoremstyle{definition}
\newaliascnt{definition}{theorem}
\newtheorem{definition}[definition]{Definition}
\aliascntresetthe{definition}
\crefname{definition}{definition}{definitions}
\Crefname{definition}{Definition}{Definitions}

\newaliascnt{remark}{theorem}
\newtheorem{remark}[remark]{Remark}
\aliascntresetthe{remark}
\crefname{remark}{remark}{remarks}
\Crefname{remark}{Remark}{Remarks}

\newaliascnt{question}{theorem}
\newtheorem{question}[question]{Question}
\aliascntresetthe{question}
\crefname{question}{question}{questions}
\Crefname{question}{Question}{Questions}

\newaliascnt{example}{theorem}

\aliascntresetthe{example}
\crefname{example}{example}{examples}
\Crefname{example}{Example}{Examples}

\newcommand{\N}{\mathbb{N}}

\newcommand{\norm}[1]{ \| #1 \| }

\DeclareMathOperator{\supp}{supp}

\DeclareMathOperator{\diag}{diag}

\newcommand{\vertiii}[1]{{\left\vert\kern-0.25ex\left\vert\kern-0.25ex\left\vert #1 
    \right\vert\kern-0.25ex\right\vert\kern-0.25ex\right\vert}}

\font\sstext=ecss1000
\font\sssub=ecss1000 at 7pt
\font\sssubsub=ecss1000 at 5pt

\newfam\ssfam
\textfont\ssfam=\sstext
\scriptfont\ssfam=\sssub
\scriptscriptfont\ssfam=\sssubsub

\makeatletter
\def\@tocline#1#2#3#4#5#6#7{\relax
  \ifnum #1>\c@tocdepth
  \else
    \par \addpenalty\@secpenalty\addvspace{#2}%
    \begingroup
      \hyphenpenalty\@M
      \@ifempty{#4}{%
        \@tempdima\csname r@tocindent\number#1\endcsname\relax
      }{%
        \@tempdima#4\relax
      }%
      \parindent\z@
      \leftskip#3\relax
      \advance\leftskip\@tempdima\relax
      \rightskip\@pnumwidth plus4em
      \parfillskip-\@pnumwidth
      #5\leavevmode\hskip-\@tempdima
        \ifcase #1
        \or\or \hskip 1em \or \hskip 2em \else \hskip 3em \fi
        #6\nobreak\relax
      \dotfill\hbox to\@pnumwidth{\@tocpagenum{#7}}\par
      \nobreak
    \endgroup
  \fi
}
\makeatother

\subjclass[2020]{Primary: 46B03; 
Secondary:
46B25, 
46B20. 
}

\keywords{Primary Banach space, $\ell_p$-sums, primary factorisation property, uniform primary factorisation property, maximal ideals of operator algebras}

\begin{document}

\title[Preservation of primariness under vector-valued sums]{Preservation of primariness under $\ell_1$-, $c_0$-, and $\ell_\infty$-sums of Banach spaces}
\author[A. Acuaviva]{Antonio Acuaviva}
\address{School of Mathematical Sciences,
Fylde College,
Lancaster University,
LA1 4YF,
United Kingdom} \email{ahacua@gmail.com}

\date{\today}

\begin{abstract}
    We prove transfer principles for the uniform primary factorisation property (UPFP) from a Banach space $X$ to the vector-valued sequence spaces $\ell_1(X)$, $c_0(X)$ and $\ell_\infty(X)$. The hypotheses are either finite-cotype assumptions on $X$ or $X^*$, or natural self-similarity assumptions on $X$. Consequently, under these conditions, the resulting vector-valued sequence spaces are primary. As applications, we recover the primariness of $\ell_\infty(L_p)$ for $1\leq p<\infty$ without using Bourgain's localisation method, and obtain the primariness of $c_0(L_1)$. We also show that $\ell_1(\Gamma,L_1[0,1])$ has the UPFP for every set $\Gamma$, and consequently that $C[0,1]^*$ has the UPFP and is primary.
\end{abstract}

\maketitle

\tableofcontents

\bigskip
\section{Introduction and organisation}

A Banach space is called \emph{primary} if every decomposition of it as a direct sum has one summand already isomorphic to the whole space. Equivalently, whenever $P\colon X\to X$ is a projection, at least one of the spaces $PX$ and $(I_X-P)X$ is isomorphic to $X$. The primariness problem for classical Banach spaces has played an important role in the isomorphic theory of Banach spaces, and many of its solutions rely on factorisation methods.

A typical feature of such proofs is that they establish a stronger factorisation dichotomy for arbitrary operators. This is captured by the (uniform) primary factorisation property, abbreviated (U)PFP; see \Cref{def:UPFP}. \break Roughly speaking, the PFP asks that, for every operator $T\colon X\to X$, the identity on $X$ factors through either $T$ or $I_X-T$, while the UPFP asks for uniform control of the corresponding factorisation constants. Thus (U)PFP is a natural property to explore in the study of primariness, and is often the more flexible property in permanence arguments. This point of view was used explicitly in the recent work of the present author and Kania on primariness of uncountable $\ell_p$-sums \cite{AcuavivaKania2026}; we refer to that article for a more detailed discussion of primariness and the PFP.

In this paper, we prove transfer principles showing that, under finite-cotype assumptions on $X$ or $X^*$, or under natural self-similarity assumptions on $X$, the UPFP passes from $X$ to the vector-valued sequence spaces $\ell_1(X)$, $c_0(X)$ and $\ell_\infty(X)$. The proofs are surprisingly elementary and recover, by a unified method, several known primariness results for spaces of the form $\ell_\infty(X)$. Our first main results is the following.

\begin{theorem}\label{th: main1}
    Let $X$ be a Banach space with the UPFP. Suppose that at least one of the following holds:
    \begin{enumerate}[label=(\roman*), ref=(\roman*)]
        \item \label{main1:it1} The space $X$ is self-similar, in the sense that $X \simeq \ell_p(X)$ for some $1 \leq p \leq \infty$, or $X \simeq c_0(X)$.
        \item \label{main1:it2} The dual space $X^*$ has finite cotype.
    \end{enumerate}
    Then $\ell_1(X)$ has the UPFP. In particular, $\ell_1(X)$ is primary.
\end{theorem}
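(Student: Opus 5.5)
Given $T\colon \ell_1(X)\to\ell_1(X)$, write $T$ as a matrix $(T_{ij})_{i,j}$ with $T_{ij}=Q_iTJ_j\colon X\to X$, where $J_j$ is the $j$-th coordinate inclusion and $Q_i$ the $i$-th coordinate projection. Since $\ell_1(X)$ has a $1$-unconditional structure, the operator norm is the supremum over columns: $\|T\|=\sup_j\|\sum_i \|T_{ij}x\|\|$, i.e.\ $\sup_j\sup_{\|x\|\le1}\sum_i\|T_{ij}x\|$. The plan is to use the UPFP of $X$, with constant $C$ say, on suitable diagonal-type operators built from the entries $T_{ij}$. From these we extract infinitely many coordinates on which either $T$ or $I-T$ acts like an isomorphism, uniformly. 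Gluing the coordinates then yields a factorisation of $I_{\ell_1(X)}$ through $T$ or $I-T$ with constants depending only on $C$, $\|T\|$ and the hypothesis constants.

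The first step is a diagonalisation or gliding hump argument. For each fixed $j$ and each finite set $F$, $\sum_{i\notin F}\|T_{ij}x\|$ is small pointwise in $x$, but only pointwise; uniformity over the unit ball of $X$ is the difficulty. This is exactly where the hypotheses enter. Under \ref{main1:it2}, the finite cotype of $X^*$ should give an estimate of the following type: for an operator $S\colon X\to\ell_1(X)$, the tail $\sum_{i>n}\|Q_iS\|$ in operator norm cannot stay large along many disjoint blocks. Dually, $S^*\colon \ell_\infty(X^*)\to X^*$ restricted to disjointly supported pieces yields a sum whose averages over signs are controlled by $\|S\|$. Cotype $q$ of $X^*$ then forces $\bigl(\sum_k\|S^*P_{B_k}\|^q\bigr)^{1/q}\lesssim\|S\|$ over disjoint blocks $B_k$. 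Hence only finitely many blocks carry mass above $\varepsilon$, and one can pass to subsequences $(i_k)$ on which $T$ is $\varepsilon$-close to its "block diagonal" part $\sum_k J_{i_k}T_{i_ki_k}Q_{i_k}$ in the compressed space $\ell_1(X)$ over $\{i_k\}$. Under \ref{main1:it1} the approach is different. We replace $X$ by $\ell_p(X)$ (or $c_0(X)$) and compress each $T_{jj}$ further onto a subsequence of copies of $X$, using a pointwise gliding hump inside $X$ itself. The self-similarity supplies the room to make the off-diagonal terms small in norm, at the cost of passing to a complemented copy of $X$ inside each coordinate.

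Once $T$ is approximately block diagonal on a sub-$\ell_1(X)$, we apply the UPFP of $X$ to each diagonal entry $T_{i_ki_k}$ (or its compressed version). For each $k$, either $I_X$ factors through $T_{i_ki_k}$ or through $I_X-T_{i_ki_k}$ with constant $C$. By pigeonhole, infinitely many $k$ fall on the same side, say $T$. The factorisations $A_kT_{i_ki_k}B_k=I_X$ with $\|A_k\|\|B_k\|\le C$ (normalised so that $\|A_k\|,\|B_k\|\le\sqrt C$) assemble into diagonal operators $A=\operatorname{diag}(A_k)$ and $B=\operatorname{diag}(B_k)$ on $\ell_1(X)$, which are bounded. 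Then $ATB=I+E$ with $\|E\|\le C\varepsilon$, and choosing $\varepsilon<1/(2C)$ makes $I+E$ invertible, giving the UPFP with constant about $2C$. Primariness follows by applying this to a projection, where the standard argument gives that $PX$ or $(I-P)X$ contains a complemented copy of $\ell_1(X)$, combined with Pełczyński decomposition since $\ell_1(X)\simeq\ell_1(\ell_1(X))$.

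The main obstacle is the norm-uniform off-diagonal reduction in the first step. The $\ell_1$-norm of columns only controls pointwise tails, and in general $\ell_1(X)$ admits operators, such as those built from spreading models, which are not close to block diagonal. I would first try to prove the cotype-based lemma by dualising, where columns of $T^*$ live in $\ell_\infty(X^*)$, and using Rademacher averaging over sign choices of blocks. The uniform bound then comes from $\|\sum_k\varepsilon_k S^*P_{B_k}\|\le\|S\|$, and cotype turns this into square-summability of block norms.
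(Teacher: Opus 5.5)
Your architecture matches the paper's. You compress $T$ to a copy of $\ell_1(X)$ on which it is close to its diagonal, apply the $C$-PFP of $X$ entrywise with a pigeonhole, glue diagonal factors, and absorb the error by a Neumann series. In case \ref{main1:it2} your dualised estimate $\bigl(\sum_k\norm{P_{B_k}S}^q\bigr)^{1/q}\leq 2C_q(X^*)\norm{S}$, for $S\colon X\to\ell_1(X)$ and disjoint row blocks $B_k$, is correct. With singleton blocks it is \Cref{lmm:dual-cotype-backward-reduction-ell1}. With interval blocks it gives $\norm{P_{\{i>n\}}TJ_j}\to0$ for each fixed column $j$, which controls the entries below the diagonal without the paper's appeal to Pe{\l}czy\'nski's theorem.

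The gap in this case is the word ``hence''. Knowing that every column has few large entries does not tell you that the rows you have already kept are small in the columns you keep later. A row need not become small along any set of columns: for $T(x_j)_j=(\sum_jx_j,0,0,\ldots)$ one has $T_{1j}=I_X$ for every $j$. So the kept rows must themselves be selected by the estimate. The missing step is the recursion of \Cref{prop:lower-triangular-ell1-dual-cotype}:
\begin{enumerate}
\item at stage $k$, take $N_k$ candidates $a_1<\dots<a_{N_k}$ from the current infinite set, with $N_k^{1/q}\varepsilon2^{-k-1}>2C_q(X^*)\norm{T}$;
\item for each later column $j$, your estimate gives some $s(j)$ with $\norm{T_{a_{s(j)},j}}<\varepsilon2^{-k-1}$;
\item pigeonhole to a single $s_k$ valid on an infinite set of columns, intersected with a tail beyond the cut-off for column $a_{s_k}$;
\item keep $n_k=a_{s_k}$ and recurse.
\end{enumerate}

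In case \ref{main1:it1} you give no mechanism. A ``pointwise gliding hump inside $X$'' cannot make off-diagonal blocks small in norm, which is exactly the uniformity problem you identify yourself. The missing idea is to play the inner $\ell_p$ ($1<p\leq\infty$) or $c_0$ norm against the outer $\ell_1$ norm using random signs. Suppose $S\colon E(X)\to\ell_1(Y)$ satisfied $\norm{P_{M_n}SJ_{A_n}}\geq\varepsilon$ for partitions $A_1,\dots,A_N$ and $M_1,\dots,M_N$ of $\N$ into infinite sets. Test $S$ on $\sum_n\theta_nJ_{A_n}x_n$, which has norm at most $N^{1/p}$ (at most $1$ for $\ell_\infty$ and $c_0$). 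Averaging over $\theta$ gives $N\varepsilon/2<\norm{S}N^{1/p}$, which is impossible for large $N$. This yields infinite $A,M$ with $\norm{P_MSJ_A}<\varepsilon$, as in \Cref{lmm:forward-reduction-from-reprod-ell1}. The same averaging gives, for $S\colon E(X)\to\ell_1^N(Y)$, an index $n$ and an infinite inner set $M$ with $\norm{P_nSJ_M}<\varepsilon$, as in \Cref{lmm:backward-reduction-fromreprod-ell1}. Run the two recursive selections on outer coordinates while shrinking the inner sets, then keep a single inner coordinate $m_j$ in each outer coordinate $n_j$. This produces a compression $P_{\mathbf n,\mathbf m}TJ_{\mathbf n,\mathbf m}$ on $\ell_1(X)$ that is $\varepsilon$-close to its diagonal. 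The case $p=1$ cannot be treated this way, since there is no mismatch of norms. It has to be handled separately, which is trivial because then $\ell_1(X)\simeq X$.
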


We also obtain a similar conclusion in the case of $c_0$ and $\ell_\infty$-sums.

\begin{theorem}\label{th: main2}
    Let $X$ be a Banach space with the UPFP. Suppose that at least one of the following holds:
    \begin{enumerate}[label=(\roman*), ref=(\roman*)]
        \item \label{main2:it1} The space $X$ is self-similar, in the sense that $X \simeq \ell_p(X)$ for some $1 \leq p < \infty$.
        \item \label{main2:it2} The space $X$ has finite cotype.
    \end{enumerate}
    Then both $c_0(X)$ and $\ell_\infty(X)$ have the UPFP. In particular, $c_0(X)$ and $\ell_\infty(X)$ are primary.
\end{theorem}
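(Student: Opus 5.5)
The plan is to reduce an arbitrary operator $T$ on $E=c_0(X)$ or $E=\ell_\infty(X)$ to its diagonal blocks and then apply the UPFP of $X$ coordinatewise. Write $T_{ij}=Q_iTJ_j\colon X\to X$, where $J_j$ is the $j$-th coordinate embedding and $Q_i$ is the $i$-th coordinate projection. The first step is a \emph{diagonalisation lemma}. We will produce an infinite set $M\subseteq\N$ such that the restricted--compressed operator $Q_M T J_M$ is a small perturbation of the diagonal operator $\diag(T_{nn})_{n\in M}$ on $c_0(M,X)$ (respectively on $\ell_\infty(M,X)$). Here $J_M$ and $Q_M$ denote the canonical embedding and the projection onto the $M$-coordinates. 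Once this is done, the UPFP of $X$ with constant $K$ gives, for each $n\in M$, operators $A_n,B_n$ with $\|A_n\|\|B_n\|\le K$ such that $I_X=A_nT_{nn}B_n$ or $I_X=A_n(I_X-T_{nn})B_n$. Passing to an infinite subset $M'\subseteq M$ on which the same alternative occurs, the diagonal operators $A=\diag(A_n)$ and $B=\diag(B_n)$ are bounded on the $c_0$- or $\ell_\infty$-sum. They factor the identity of $c_0(M',X)\simeq E$ through $T$ or through $I_E-T$, up to a small perturbation, which is then absorbed by a Neumann series. The uniform constant yields the UPFP of $E$.

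For the diagonalisation I will use a gliding hump argument. Three facts are needed.
\begin{enumerate}[label=(\alph*)]
\item For each fixed $j$, the column $(T_{ij}x)_i$ lies in $E$.
\item For each fixed $i$, the row $Q_iT$ is an operator $E\to X$.
\item The off-diagonal mass must be made summably small along a subsequence.
\end{enumerate}
In the $c_0$ case, (a) gives $\|T_{ij}\|\to0$ as $i\to\infty$ for each fixed $j$ only pointwise, and uniform smallness fails in general. This is where the hypotheses enter.

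Under \ref{main2:it2} (finite cotype $q$ of $X$) I will show the following. For a fixed $i$ and disjoint blocks, $\sum_j\|T_{ij}x_j\|$ is controlled by the norm of $(x_j)$ in $c_0(X)$. The argument is as follows. A row operator $R=Q_iT\colon c_0(X)\to X$ satisfies
\[
\Bigl(\sum_j\|T_{ij}x_j\|^q\Bigr)^{1/q}\lesssim\mathbb{E}\Bigl\|\sum_j\varepsilon_jT_{ij}x_j\Bigr\|\le\|R\|\sup_j\|x_j\|.
\]
Hence, for each $i$, only finitely many $j$ can have $\|T_{ij}\|>\delta$, with a bound depending only on $\delta$, $\|T\|$ and the cotype constant. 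A symmetric argument applied to columns, dualising or using $\ell_\infty$-boundedness, together with a Ramsey/diagonal extraction, then gives $M$ with $\sum_{i\neq j\in M}\|T_{ij}\|<\varepsilon$. Under \ref{main2:it1} I will instead use $X\simeq\ell_p(X)$ to replace $T_{ij}$ by operators on $\ell_p(X)$. Each copy of $X$ is thereby spread into infinitely many pieces, and the same $\ell_p$ versus $c_0$ incomparability argument (Pitt-type: every operator $c_0\to\ell_p$ is compact) yields the smallness estimate.

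The main obstacle is the $\ell_\infty(X)$ case. There $T$ is not determined by its matrix $(T_{ij})$, since operators can live on $\ell_\infty/c_0$. The plan is to restrict to $\ell_\infty(M,X)$ and to use a Rosenthal-type disjointness lemma for the finitely additive measures $N\mapsto Q_iTJ_N$, $N\subseteq\N$. This provides an infinite $M$ with $\|Q_iTJ_{M\setminus\{i\}}\|<\varepsilon_i$ for all $i\in M$. The finite-cotype (respectively self-similarity) estimate above is what makes these measures uniformly strongly additive. I expect verifying this uniform countable additivity to be the crux of the argument. The rest, namely the diagonal factorisation, the perturbation and the identification $\ell_\infty(M,X)\simeq\ell_\infty(X)$, is routine.
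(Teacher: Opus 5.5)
Your overall architecture (diagonal reduction, pigeonholing the coordinatewise UPFP alternative, absorbing the perturbation by a Neumann series) is the paper's. However, the diagonalisation, which carries all the content, does not work as written.

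In the cotype case there is no column analogue of your row estimate, and the target $\sum_{i\neq j\in M}\norm{T_{ij}}<\varepsilon$ is unattainable. For $X=\ell_2$, the formula $(Tx)_i=\sum_{j<i}j^{-1}\langle x_j,e_i\rangle e_j$ defines an operator on $c_0(\ell_2)$, since every row has norm at most $\pi/\sqrt{6}$. It satisfies $\norm{T_{ij}}=1/j$ for all $i>j$, so $\sum_{i\in M,\,i>j}\norm{T_{ij}}=\infty$ for every infinite $M$ and every $j\in M$. What is needed is only row-wise control: for these sup-norm sums, $\norm{Q_MTJ_M-D}=\sup_{i\in M}\norm{Q_iTJ_{M\setminus\{i\}}}$, where $D$ is the diagonal part of $Q_MTJ_M$. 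The entries below the diagonal need their own device. The paper uses a pigeonhole over $N$ candidate columns (\Cref{lmm:cotype-forward-reduction}): if each of $N$ columns were $\varepsilon$-large on cofinitely many rows, a single row would carry $N$ large entries, which cotype forbids. It controls the part to the right of the diagonal by the block estimate of \Cref{lmm:cotype-backward-reduction}, which also handles the part of $T$ invisible to its matrix on $\ell_\infty(X)$.

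Your Rosenthal idea can in fact give the row-wise bound in one step for both $c_0(X)$ and $\ell_\infty(X)$, but not for the reason you give. Uniform strong additivity of $\nu_i(N)=\norm{Q_iTJ_N}$ fails already for $T=I$, where $\nu_i(\{k\})=\delta_{ik}$. Moreover, Rosenthal's lemma is false for general bounded submeasures (see the example below). The correct input is the block cotype bound $\sum_{k=1}^m\nu_i(A_k)^q\le(2C_q(X)\norm{T})^q$ for pairwise disjoint $A_1,\dots,A_m$, fed into Rosenthal's induction. Suppose every infinite $\Delta\subseteq\N$ contained some $n$ with $\nu_n(\Delta\setminus\{n\})\ge\varepsilon$. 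Split $\Delta$ into infinitely many infinite pieces, apply the $m$-fold statement inside each piece, and apply the hypothesis to the set of points so obtained. This yields one $n$ with $m+1$ pairwise disjoint sets of $\nu_n$-size at least $\varepsilon$, which is impossible for large $m$.

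The self-similar half is essentially missing. Without cotype, no selection of outer coordinates can work, and the $\nu_i$ need not satisfy any disjoint-sum bound. Take $X=\ell_1(\ell_\infty)\simeq\ell_1(X)$ and a norm-one $\phi\in X^*$. Let $Rx\in X$ be the vector whose first coordinate is $(\phi(x_j))_j\in\ell_\infty$ and whose other coordinates vanish. Then the operator $(Tx)_i=Rx$ on $\ell_\infty(X)$ has $\nu_i(N)=1$ for every nonempty $N$, hence $\norm{Q_MTJ_M-D}=1$ whenever $M$ has at least two elements. Pitt's theorem does not help either, since the operators $c_0(Y)\to\ell_p(X)$ that arise are far from compact.

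What is needed are block Pitt-type lemmas, proved by Rademacher averaging: disjointly supported vectors have norm at most $1$ in the sup-norm domain, while disjoint blocks add $p$-th powers in $\ell_p(X)$. Concretely, for $S\colon E(Y)\to\ell_p(X)$ there are infinite $A,M$ with $\norm{P_ASJ_M}<\varepsilon$ (\Cref{lmm:backward-reduction-fromreprod-ellinfty}), and there is a forward analogue for $\ell_\infty^N(Y)\to E(\ell_p(X))$ (\Cref{lmm:forward-reduction-from-reprod-ellinfty}). These must then be run in nested recursions. Inner index sets are shrunk row by row on the range side, transported to the domain side, and finally reduced to one inner coordinate per outer coordinate, i.e.\ to the compression $P_{\mathbf n,\mathbf m}TJ_{\mathbf n,\mathbf m}$. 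Your sentence that ``the same incomparability argument yields the smallness estimate'' supplies none of this.
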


As an immediate consequence, we recover the primariness of $\ell_\infty(L_p)$ for $1\leq p<\infty$ by elementary methods. The original proofs of Wark \cite{Wark2007} for $1<p<\infty$ and M\"uller \cite{Muller2012} for the full range, rely on substantially more technical methods, in particular Bourgain's localisation method. We also record the case of $c_0(L_1)$, which we could not find explicitly stated in the literature, although it is presumably known.

\begin{corollary}
    The spaces $\ell_\infty(L_p)$, $1\leq p<\infty$, and $c_0(L_1)$ have the UPFP and thus are primary.
\end{corollary}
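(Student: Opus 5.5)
The corollary follows by feeding known facts about $L_p$ into \Cref{th: main2}. We only need to check, for each space, that $X=L_p$ has the UPFP and that one of the alternatives \ref{main2:it1} or \ref{main2:it2} holds. Primariness then follows from the UPFP, as recorded in \Cref{th: main2}.

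\textbf{The UPFP for $L_p$.} For $1\le p<\infty$, we invoke the classical factorisation results. For $1<p<\infty$, Enflo's argument, in the Maurey/Alspach--Enflo--Odell form, shows that for every operator $T$ on $L_p$ the identity factors through $T$ or $I-T$. The Haar system approach, via Johnson--Maurey--Schechtman--Tzafriri and more recent quantitative work of Lechner, Motakis, Müller and Schlumprecht, gives uniform constants. For $p=1$, Enflo--Starbird proved that $L_1$ is primary, and the factorisation argument behind it, in the same quantitative form, gives the UPFP. Since this is the one genuinely deep input, we would cite it rather than reprove it. If the literature only records the PFP with non-uniform constants, I would try to upgrade it to the UPFP by a standard gluing argument. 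Take operators $T_n$ on $L_p$ for which the constants degenerate and assemble them as a block-diagonal operator on $\ell_p(L_p)\simeq L_p$. The PFP for that single operator then forces uniform constants, with a pigeonhole over $n$ to decide between $T_n$ and $I-T_n$. This step is the main obstacle.

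\textbf{$\ell_\infty(L_p)$.} For $1\le p<\infty$, the space $L_p$ has cotype $\max(p,2)<\infty$, so \ref{main2:it2} applies. Alternatively, $L_p\simeq \ell_p(L_p)$: decompose $[0,1]$ into countably many disjoint intervals and rescale each to $[0,1]$. This is \ref{main2:it1}. Either way, \Cref{th: main2} gives the UPFP for $\ell_\infty(L_p)$.

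\textbf{$c_0(L_1)$.} The same two verifications apply with $p=1$: $L_1$ has cotype $2$ and satisfies $L_1\simeq\ell_1(L_1)$. Hence \Cref{th: main2} gives the UPFP for $c_0(L_1)$, and in fact for every $c_0(L_p)$.

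Apart from securing the UPFP (not merely primariness) for $L_p$, every step is a direct check of hypotheses.
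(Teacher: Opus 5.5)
Your reduction is the paper's: apply \Cref{th: main2} with $X=L_p$. The paper checks \ref{main2:it1} via $L_p\simeq\ell_p(L_p)$. Your remark that \ref{main2:it2} also holds (cotype $\max(p,2)$, in particular cotype $2$ for $L_1$) is correct, so either branch of the theorem applies, and your extension to every $c_0(L_p)$ is a valid consequence.

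The only substantive input is the UPFP of $L_p$, and here your proposal is weaker than it needs to be. For $1<p<\infty$ the paper, like you, extracts it from Alspach--Enflo--Odell. For $p=1$, however, the paper does not rely on the unverified claim that the Enflo--Starbird argument is uniform. It proves the UPFP in \Cref{cor:L1-has-UPFP}:
\begin{itemize}
\item If $(\nu_t)$ is Kalton's representing family for $T$, then $I-T$ is represented by $(\delta_t-\nu_t)$.
\item Since $\nu_t(\{t\})+(\delta_t-\nu_t)(\{t\})=1$, one of $|\nu_t(\{t\})|$ and $|(\delta_t-\nu_t)(\{t\})|$ exceeds $1/3$ on a set of positive measure.
\item Capon's localisation then gives $B$ of positive measure such that $M_BT\jmath_B$ (or $M_B(I-T)\jmath_B$) is an isomorphism of $L_1(B)\cong L_1$ whose inverse has norm at most $6$. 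This is the $6$-PFP.
\end{itemize}

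You should drop the fallback gluing argument, because it does not work as sketched. First, the witnesses $T_n$ to the failure of uniform constants need not satisfy $\sup_n\norm{T_n}<\infty$. The dichotomy ``$T$ versus $I-T$'' is not scale invariant, so you cannot normalise, and $\diag(T_n)$ need not be bounded. Second, even if it is bounded, a factorisation $A\,\diag(T_n)\,B=I_{\ell_p(L_p)}$ only yields $I_{L_p}=\sum_n P_1AJ_nT_nP_nBJ_1$. That is a sum over $n$, and no pigeonhole extracts from it a factorisation of the identity through a single $T_n$ or $I-T_n$. Doing so would require knowing that operators through which the identity does not factor are stable under such sums, which you have not shown.
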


\begin{proof}
    By \Cref{th: main2}, it is enough to note that $L_p$, $1\leq p<\infty$, has the UPFP and satisfies the self-similarity condition $L_p\simeq \ell_p(L_p)$. For $1<p<\infty$, the UPFP can be extracted from the primariness argument of Alspach, Enflo and Odell \cite{AEO1977}. For $p=1$, the corresponding quantitative statement can be extracted from Capon's argument \cite{Capon1980}, see \Cref{cor:L1-has-UPFP}. It is plausible that a quantitative statement of this kind can also be extracted from Enflo's proof, via Maurey's exposition, of the primariness of $L_p$ \cite{Maurey1975}, although we have not verified this. Therefore \Cref{th: main2} applies and gives the UPFP. Primariness follows directly from the UPFP together with Pe{\l}czy{\'n}ski's decomposition method.
\end{proof}

\begin{remark}
    Lechner and Speckhofer recently proved a much broader factorisation theorem for Haar system Hardy spaces \cite[Theorem 3.1]{LechnerSpeckhofer2025}. In particular, their result also implies the UPFP for $L_p$, $1\leq p<\infty$, and for $\ell_\infty(L_p)$ when $1<p<\infty$. The point of the corollary above is that, once the UPFP of a Banach space X is known, the passage to $\ell_\infty(X)$ follows from the elementary triangular-reduction argument developed here.
\end{remark}

We also record a consequence for spaces of continuous functions on ordinal intervals. Combining our $\ell_1$-sum transfer theorem with the known factorisation properties of $C(\alpha)$, we obtain the remaining $\ell_1$-case in the classification of primariness of the spaces $\ell_p(C(\alpha))$ begun in \cite{Acuaviva2026PrimarinesslpCK}.

\begin{corollary}
    For every ordinal $\alpha$, the space $\ell_1(C(\alpha))$ has the UPFP. In particular, $\ell_1(C(\alpha))$ is primary.
\end{corollary}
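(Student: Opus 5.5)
The plan is to apply \Cref{th: main1} with $X=C(\alpha)$. This requires two inputs: that $C(\alpha)$ has the UPFP, and that $C(\alpha)$ satisfies one of the hypotheses \ref{main1:it1} or \ref{main1:it2}. Hypothesis \ref{main1:it2} is available for every $\alpha$, since $C(\alpha)^*\cong \ell_1(|\alpha|)$ (the Riesz representation for scattered compacta) has cotype $2$. It is therefore natural to route everything through \ref{main1:it2}; the self-similarity route would also require a case analysis on $\alpha$.

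\textbf{Step 1 (reductions).} If $\alpha$ is finite, then $C(\alpha)$ is finite-dimensional and $\ell_1(C(\alpha))\simeq \ell_1$. The space $\ell_1$ has the UPFP: this is classical, via Pe{\l}czy{\'n}ski's argument that for $T$ on $\ell_1$ either $T$ or $I-T$ is bounded below on a block subspace spanned by a subsequence of the unit vector basis with uniform constants. Alternatively, it follows from \Cref{th: main1} applied to $X=\mathbb{K}$. For infinite $\alpha$, Semadeni's classification shows that it is harmless to replace $\alpha$ by an isomorphic representative. In particular, $C(\alpha)\simeq C(\omega^{\omega^\beta})$ in the countable case; for uncountable $\alpha$ one uses the analogous Kislyakov/Gul'ko--Oskin type normal forms.

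\textbf{Step 2 (UPFP for $C(\alpha)$).} Here I would invoke the known factorisation results for $C(\alpha)$. For countable $\alpha$ these are the Bourgain/Alspach-type results that the identity of $C(\omega^{\omega^\beta})$ factors through $T$ or $I-T$. For general ordinals they are the results used in \cite{Acuaviva2026PrimarinesslpCK}, where the UPFP of $C(\alpha)$ is presumably established or quoted, and I would cite that. If only the PFP is recorded there, I would upgrade it to the UPFP by a standard gliding hump/contradiction argument: failure of uniformity yields operators $T_n$ with bad constants, and these are assembled diagonally on $C(\alpha)\simeq c_0(C(\alpha))$, which holds for infinite $\alpha$ of the normal form. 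This produces a single operator violating the PFP.

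\textbf{Step 3 (conclusion).} With the UPFP of $C(\alpha)$ and cotype $2$ of $C(\alpha)^*$ in hand, \Cref{th: main1}\ref{main1:it2} gives the UPFP of $\ell_1(C(\alpha))$, and primariness follows from Pe{\l}czy{\'n}ski's decomposition method since $\ell_1(C(\alpha))\simeq \ell_1(\ell_1(C(\alpha)))$.

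The main obstacle is Step 2, namely making sure that the UPFP, and not merely primariness, of $C(\alpha)$ is available for every ordinal, including uncountable ones. It is not clear that $C(\alpha)$ is even primary for all ordinals. For instance, $C(\omega_1\cdot 2)\simeq C(\omega_1)$, but there are ordinals where $C(\alpha)$ decomposes non-trivially. If such $\alpha$ exist, I would need the convention of \cite{Acuaviva2026PrimarinesslpCK} to restrict to the relevant representatives, or to show directly that $\ell_1$ absorbs the defect: if $C(\alpha)\simeq C(\beta)\oplus C(\gamma)$, then $\ell_1(C(\alpha))$ is the $\ell_1$-sum of the corresponding $\ell_1$-sums. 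In that case I would treat such spaces via the uncountable/finite-sum permanence results of \cite{AcuavivaKania2026}.
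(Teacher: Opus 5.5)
Your overall route matches the paper's main case: apply \Cref{th: main1}\ref{main1:it2} with $X=C(\alpha)$, using that $C(\alpha)^*\simeq\ell_1([0,\alpha])$ has cotype~$2$. Handling finite $\alpha$ separately via $\ell_1(C(\alpha))\simeq\ell_1$ is also correct.

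The gap is Step~2. $C(\alpha)$ does \emph{not} have the UPFP for every ordinal, even after passing to isomorphic representatives. The exceptions are exactly the spaces $C(\alpha)\simeq C(\xi\cdot n)$ with $\xi$ an uncountable regular ordinal and $n\geq 2$. Your claim $C(\omega_1\cdot 2)\simeq C(\omega_1)$ is false, and this space is in fact the prototypical counterexample. We have $C(\omega_1\cdot 2)\simeq C(\omega_1)\oplus C(\omega_1)$, which by Semadeni's theorem is not isomorphic to $C(\omega_1)$. These spaces are not primary, and they fail the PFP. For $C(\omega_1)\oplus C(\omega_1)$, the coordinate projection $P$ onto the first summand is a witness: applying the Loy--Willis character of $\mathscr{B}(C(\omega_1))$ entrywise sends $P$ and $I-P$ to rank-one $2\times 2$ matrices, so neither can factor the identity. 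Hence \Cref{th: main1} cannot be applied with $X=C(\alpha)$ in this case, and the paper treats it separately.

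Your proposed repair does not work as stated. The UPFP is not preserved by finite direct sums in general: for $p\neq q$, the space $\ell_p\oplus\ell_q$ fails the PFP. So splitting $\ell_1(C(\alpha))$ as $\ell_1(C(\beta))\oplus\ell_1(C(\gamma))$ proves nothing.

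The missing observation is that in the exceptional case all summands are the same space. Indeed $C(\xi\cdot n)\simeq C(\xi)^n$, hence $\ell_1(C(\alpha))\simeq\ell_1(C(\xi)^n)\simeq\ell_1(C(\xi))$. Moreover, $C(\xi)$ itself falls under the Alspach--Benyamini theorem, so it has the UPFP and \Cref{th: main1}\ref{main1:it2} applies to $X=C(\xi)$.

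Your fallback upgrade from the PFP to the UPFP is also unsound, for two reasons. First, it uses $C(\alpha)\simeq c_0(C(\alpha))$, which fails for $\alpha=\omega_1$, since it would force $C(\omega_1)\simeq C(\omega_1)\oplus C(\omega_1)$. Second, even where that isomorphism holds, the $T_n$ need not be uniformly bounded. A factorisation of $I_{c_0(X)}$ through $\diag(T_n:n\in\N)$ also does not directly give a factorisation of $I_X$ through a single $T_n$ with controlled constant. The paper instead obtains uniformity by noting that the estimates in the Alspach--Benyamini proof are quantitative.
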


\begin{proof}
    Suppose first that $C(\alpha)$ is not isomorphic to $C(\xi\cdot n)$ for any uncountable regular ordinal $\xi$ and any integer $n\geq2$. By the Alspach--Benyamini theorem \cite{AlspachBenyamini1977}, in the form stated in \cite[Proposition~1.9]{Acuaviva2025}, the space $C(\alpha)$ has the PFP; observing that the estimates in the proof are quantitative gives the UPFP. Since the ordinal interval $[0,\alpha]$ is scattered, we have $C(\alpha)^*\simeq \ell_1([0, \alpha])$.
    In particular, $C(\alpha)^*$ has finite cotype and therefore \Cref{th: main1} implies that $\ell_1(C(\alpha))$ has the UPFP.

    Suppose now that $C(\alpha)\simeq C(\xi\cdot n)$ for some uncountable regular ordinal $\xi$ and some integer $n\geq2$. Since $C(\xi\cdot n)$ is isomorphic to the direct sum of $n$ copies of $C(\xi)$, we have $\ell_1(C(\alpha))\simeq \ell_1(C(\xi))$. By the first part of the proof, applied to $\xi$, and by the isomorphic invariance of the UPFP, $\ell_1(C(\alpha))$ has the UPFP. The primariness follows from the UPFP together with Pe{\l}czy{\'n}ski's decomposition method, since $\ell_1(C(\alpha))\simeq \ell_1(\ell_1(C(\alpha)))$.
\end{proof}
Finally, although the transfer principles in \Cref{th: main1,th: main2} apply only under the stated abstract hypotheses on the summand $X$, the diagonal-reduction method can sometimes be implemented beyond those hypotheses by using additional structure of $X$. We illustrate this phenomenon for the summand $L_1[0,1]$, obtaining the following result. It generalises a previous result of the author and Kania \cite{AcuavivaKania2026}, which assumed the negation of $\mathrm{CH}$.

\begin{theorem}\label{th:main3}
   For every set $\Gamma$, the space $\ell_1(\Gamma,L_1[0,1])$ has the UPFP. Consequently, $C[0,1]^*$ has the UPFP and, in particular, is primary.
\end{theorem}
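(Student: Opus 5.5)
The plan is to run the same diagonal-reduction scheme that underlies \Cref{th: main1}, now for an arbitrary index set $\Gamma$, replacing the abstract hypotheses on $X$ with concrete structure of $L_1=L_1[0,1]$. Write $Y=\ell_1(\Gamma,L_1)$. Any operator $T\colon Y\to Y$ is given by a matrix $(T_{\gamma\delta})_{\gamma,\delta\in\Gamma}$ of operators on $L_1$. Because the domain is an $\ell_1$-sum, each column satisfies $\sum_\gamma \|T_{\gamma\delta}x\|\le \|T\|\|x\|$. The aim is to find an infinite (in fact countable suffices) set of coordinates $\gamma_1,\gamma_2,\dots$ on which $T$ is almost diagonal. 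This means $\sum_{j\ne i}\|T_{\gamma_j\gamma_i}\|$ is small in a suitable sense, after possibly passing to complemented copies of $L_1$ inside each coordinate.

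\textbf{Step 1 (off-diagonal smallness).} Given a coordinate $\delta$, only countably many rows $\gamma$ can carry non-negligible mass of $T_{\gamma\delta}$ on a norming set. The natural device is the Haar system $(h_n)$, or a sequence of independent copies of $L_1$ inside $L_1$. For each $\delta$, the set $\{\gamma : T_{\gamma\delta}h_n\ne0 \text{ for some } n\}$ is countable, since $L_1$ is separable and the column sums converge. A free-set argument (Hajnal's set mapping theorem, or simply an inductive choice when only countably many coordinates are needed) then gives an infinite sequence $(\gamma_i)$ with $T_{\gamma_j\gamma_i}$ small on the relevant Haar vectors. Here one must be careful, because operators on $L_1$ are not compact and smallness on a countable set of vectors is not smallness in norm. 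To get genuine norm control I would use the $L_1$-specific fact that any operator on $L_1$ can be made small on a block subsystem of the Haar basis spanning a complemented copy of $L_1$. This is Enflo--Starbird/Capon-type stabilisation, which I would assume in the quantitative form underlying \Cref{cor:L1-has-UPFP}. Concretely, I would apply it iteratively: choose $\gamma_1$, then shrink into a copy $Z_1\subseteq L_1$ on which the finitely many relevant off-diagonal entries are $\varepsilon_1$-small, and continue. The $\ell_1$-summability of columns is what makes the tails controllable uniformly.

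\textbf{Step 2 (diagonal dichotomy and assembly).} On the resulting subspace $\ell_1(Z_i)\cong\ell_1(L_1)\cong L_1$-type sum, with each $Z_i\simeq L_1$ complemented, $T$ is a small perturbation of the diagonal operator $\diag(T_{\gamma_i\gamma_i}|_{Z_i})$. Apply the UPFP of $L_1$ (\Cref{cor:L1-has-UPFP}) to each diagonal entry. For each $i$, the identity of $L_1$ factors through $T_{\gamma_i\gamma_i}$ or through $I-T_{\gamma_i\gamma_i}$, with a uniform constant $C$. Infinitely many $i$ fall in the same alternative, and by uniformity the factorisations glue to a factorisation of $I_{\ell_1(L_1)}$ through $T$ or $I-T$ with constant about $2C$. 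It then remains to factor $I_Y$ through $I_{\ell_1(L_1)}$ when $\Gamma$ is uncountable. If the factorisation through a countable block is all one obtains, this is impossible by size, so the sequence must be replaced by a family indexed by $\Gamma$. I would therefore run Step 1 transfinitely: the free-set lemma gives a subset $\Gamma'\subseteq\Gamma$ with $|\Gamma'|=|\Gamma|$ on which off-diagonal entries vanish on the chosen copies $Z_\gamma$. Then, among $\gamma\in\Gamma'$, $|\Gamma|$ many share an alternative, and $\ell_1(\Gamma',L_1)\simeq Y$.

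For the consequence, $C[0,1]^*\simeq \ell_1(\mathfrak c, L_1)\oplus\ell_1(\mathfrak c)$, and $\ell_1(\mathfrak c)$ embeds complementably into $\ell_1(\mathfrak c,L_1)$ absorbing it, so $C[0,1]^*\simeq\ell_1(\mathfrak c,L_1)$. The UPFP then follows, and primariness follows by Pe{\l}czy\'nski decomposition.

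The main obstacle is Step 1 in the uncountable setting. One needs norm-small off-diagonal entries on $|\Gamma|$ many coordinates simultaneously, without CH. Exact vanishing via countable supports only holds on countably many vectors, while $L_1$-stabilisation gives norm smallness only after passing to subspaces, and this must be done coherently for $|\Gamma|$ many coordinates. I would first try to show that for each $\delta$ and each fixed countable Haar-type system, the set of $\gamma$ with $T_{\gamma\delta}$ nonzero on that system is countable. Combined with the fact that an operator on $L_1$ which vanishes on a Haar system spanning a copy of $L_1$ is zero there, this gives exact vanishing on a common copy of $L_1$ for all coordinates, and then Hajnal's theorem produces $\Gamma'$.
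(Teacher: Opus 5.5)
Your plan has a genuine gap, and it sits exactly at the point you flag as the main obstacle. The mechanism you finally propose has two steps. First, each column $TJ_\delta$ has countable support $\Lambda_\delta\subseteq\Gamma$. This is true by separability of $L_1[0,1]$, and $T_{\gamma\delta}=0$ identically for $\gamma\notin\Lambda_\delta$, so the Haar-system discussion adds nothing. Second, Hajnal's theorem gives a free set $\Gamma'$ with $|\Gamma'|=|\Gamma|$ on which $T$ is exactly diagonal.

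But Hajnal's theorem for set mappings with \emph{countable} values produces free sets of size $|\Gamma|$ only when $|\Gamma|\geq\aleph_2$, and it is false at $\aleph_1$. Take $\Gamma=\omega_1$ and let column $\alpha$ have entries $c_{\beta\alpha}I$ for $\beta<\alpha$, with $c_{\beta\alpha}>0$ and $\sum_{\beta<\alpha}c_{\beta\alpha}\leq1$, and zeros elsewhere. Since column norms are at most $1$ and the domain is an $\ell_1$-sum, this is a bounded operator. Yet the support of column $\alpha$ is $\alpha$, so not even a two-element set is free. So your argument covers only $|\Gamma|\geq\aleph_2$. For $C[0,1]^*\simeq\ell_1(\mathfrak c,L_1[0,1])$ it needs $\neg\mathrm{CH}$, which is precisely the earlier result this theorem improves.

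The fallbacks in Step 1 do not repair this:
\begin{itemize}
\item It is false that an arbitrary operator on $L_1$ can be made small on a complemented copy of $L_1$; an off-diagonal entry equal to $I$ is a counterexample.
\item Capon's localisation gives invertibility of a diagonal compression, not smallness of off-diagonal entries.
\item $\ell_1$-summability of a column gives tail smallness only pointwise, not in norm. For example, $Sf=(\mathbf 1_{I_n}f)_n$, for a partition of $[0,1]$ into intervals $I_n$, satisfies $\norm{P_{\N\setminus\{1,\ldots,N\}}S}=1$ for every $N$.
\end{itemize}

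The missing idea is to aim for \emph{finitely} many exceptional coordinates per column with a norm-small remainder, rather than countably many with exact vanishing. You obtain this by shrinking the domain of each column to some $L_1(B_\gamma)$. This is the paper's finite interference lemma: for $S\colon L_1[0,1]\to\ell_1(\Lambda,L_1[0,1])$ and $\eta>0$ there are a finite $F\subseteq\Lambda$ and a set $B$ of positive measure with $\norm{P_{\Lambda\setminus F}S\jmath_B}<\eta$. Its proof goes as follows:
\begin{itemize}
\item Reduce to countable $\Lambda$.
\item Represent $S$ by Kalton's theorem as a measurable family of measures $(\nu_y)_{y\in\N\times[0,1]}$.
\item Set $\rho_N(E)=\int_{\Omega_N}|\nu_y|(E)\,dm(y)$ with $\Omega_N=(\N\setminus\{1,\ldots,N\})\times[0,1]$. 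Then $\rho_N\leq\norm{S}\mu$ and $\rho_N([0,1])\to0$.
\item Hence, for some $N$, the density $d\rho_N/d\mu$ is below $\eta/2$ on a set $B$ of positive measure.
\end{itemize}

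Apply this to $P_{\Gamma\setminus\{\gamma\}}TJ_\gamma$ to get finite sets $F_\gamma$ and sets $B_\gamma$. Hajnal's theorem for \emph{finite} set mappings holds for every uncountable $\Gamma$, including $\aleph_1$, and gives $H$ with $|H|=|\Gamma|$ and $F_\gamma\cap H=\varnothing$ for $\gamma\in H$. Compress to $\bigl(\bigoplus_{\gamma\in H}L_1(B_\gamma)\bigr)_{\ell_1}$ and estimate column by column; this yields an $\varepsilon$-perturbation of a diagonal operator. From there your Step 2 goes through as you describe: $|H|$ many indices fall into one alternative of the diagonal dichotomy, and the perturbation lemma finishes.
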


\subsection{Organisation and proof strategy}

In \Cref{sec:notation}, we introduce the notation and basic facts used throughout the paper, including the matrix representation of operators on vector-valued sequence spaces. Once this framework is in place, the proofs of \Cref{th: main1,th: main2} are fairly straightforward, we present them in \Cref{sec:diagonal-and-proofs}. The main idea is to reduce, up to a small perturbation, an operator $T\colon E(X)\to E(X)$ to diagonal form. We then use the UPFP of $X$ to factor the identity through either the resulting diagonal operator $D$ or through $I_{E(X)}-D$; see \Cref{prop:diagonal-dichotomy}. Finally, we use the fact that factorisation of the identity is stable under sufficiently small perturbations; see \Cref{lmm:factorisation-perturbation}.

The diagonal reduction is obtained in two steps: an upper-triangular reduction, proved in \Cref{sec:upper-triangular}, and a lower-triangular reduction, proved in \Cref{sec:lower-triangular}. There are two types of hypotheses. In the finite-cotype case, the relevant finite-dimensional geometry is sufficiently far from that of the scalar sequence spaces involved: finite cotype of $X^*$ is used for $\ell_1(X)$, while finite cotype of $X$ is used for $c_0(X)$ and $\ell_\infty(X)$. In the self-similar case, the reductions are obtained by contrasting incompatible sequence-space geometries: for $\ell_1$-sums, the internal $\ell_p$, $1<p\leq\infty$, or $c_0$ structure is contrasted with the external $\ell_1$ structure, while for $c_0$- and $\ell_\infty$-sums, the internal $\ell_p$ structure is contrasted with the external $\ell_\infty$ norm structure.

In \Cref{sec:uncountable-l1-sum}, we apply a variant of the same diagonal method to uncountable $\ell_1$-sums with summand $L_1[0,1]$. The additional ingredients are a finite-coordinate localisation lemma for operators into $\ell_1(\Gamma,L_1[0,1])$ and a finite free-set theorem of Hajnal. This gives the UPFP for $\ell_1(\Gamma,L_1[0,1])$ for every set $\Gamma$, and hence the UPFP and primariness of $C[0,1]^*$.

Finally, \Cref{sec:remarks-and-questions} contains some additional remarks concerning the relationship between the PFP and UPFP of $X$ and those of $\ell_p(X)$, $c_0(X)$ and $\ell_\infty(X)$, together with some open questions.

\bigskip
\section{Notation and preliminary results}\label{sec:notation}

We use standard notation and conventions, unless explicitly stated other\-wise. The scalar field is denoted by $\mathbb{K}$ and may be either $\mathbb{R}$ or $\mathbb{C}$. By an \emph{operator} we mean a bounded linear map between Banach spaces. If $X$ is a Banach space, then $I_X \colon X \to X$ denotes the identity operator on $X$, $B_X$ denotes its closed unit ball, and $\mathscr{B}(X)$ denotes the space of operators on $X$. If $X$ has cotype $q$, we write $C_q(X)$ for its cotype $q$ constant. Further notation will be introduced as needed. We shall use the following terminology.

\begin{definition}
    Let $T \colon X \to Y$ and $S \colon Z \to W$ be operators between Banach spaces. We say that $S$ \emph{factors through} $T$ with constant $C$ if there are operators $V \colon Z \to X$ and $U \colon Y \to W$ such that
    \begin{equation*}
    UTV = S
    \quad \text{and} \quad
    \norm{U}\norm{V} \leq C.
    \end{equation*}
    If such a constant $C \geq 1$ exists, we simply say that $S$ factors through $T$.
\end{definition}

The uniform primary factorisation property will play a central role throughout the paper. We recall its definition.

\begin{definition}\label{def:UPFP}
    Let $X$ be a Banach space.
    \begin{enumerate}[label = (\alph*), ref = (\alph*)]
    \item We say that $X$ has the \emph{primary factorisation property} (PFP) if, for every operator $T \colon X \to X$, the identity operator $I_X$ factors through either $T$ or $I_X - T$.
    \item Let $C \geq 1$. We say that $X$ has the \emph{$C$-primary factorisation property} ($C$-PFP) if, for every operator $T \colon X \to X$, the identity operator $I_X$ factors through either $T$ or $I_X - T$ with constant $C$.
    \item We say that $X$ has the \emph{uniform primary factorisation property} (UPFP) if $X$ has the $C$-PFP for some $C \geq 1$.
    \end{enumerate}
\end{definition}

Throughout the paper, we shall use without further comment that the UPFP is invariant under isomorphisms; see, for instance, \cite[Proposition~2.5]{AcuavivaKania2026}.

\subsection{Vector-valued sequence spaces}

Given a Banach space $X$, we shall often consider vector-valued sequence spaces. Let $E$ be a Banach space with a fixed normalized $1$-unconditional basis $(e_n)_{n\in\N}$. We define $E(X)$ to be the Banach space of all sequences $x=(x_n)_{n\in\N}$ in $X$ such that
\begin{equation*}
    \sum_{n=1}^{\infty}\norm{x_n}e_n
\end{equation*}
norm converges in $E$, equipped with the norm
\begin{equation*}
    \norm{x}=\left\|\sum_{n=1}^{\infty}\norm{x_n}e_n\right\|.
\end{equation*}
This construction depends on the chosen unconditional basis of $E$, which will be fixed throughout and left implicit in the notation.

For $x=(x_n)_{n\in\N}\in E(X)$, we define the \emph{support} of $x$ by
\begin{equation*}
    \supp(x)=\{n\in\N:x_n\neq 0\}.
\end{equation*}
If $M\subseteq\N$, we set
\begin{equation*}
    E(M,X)=\{x\in E(X):\supp(x)\subseteq M\}.
\end{equation*}
We denote by
\begin{equation*}
    J_M\colon E(M,X)\to E(X) \quad \text{and} \quad P_M\colon E(X)\to E(M,X)
\end{equation*}
the canonical inclusion and projection, respectively. Thus $P_MJ_M=I_{E(M,X)}$. If $M=\{n\}$ is a singleton, we write $J_n$ and $P_n$ in place of $J_{\{n\}}$ and $P_{\{n\}}$; under the natural identification of $E(\{n\},X)$ with $X$, these are precisely the canonical coordinate embedding and projection.

If $E$ and $F$ are Banach spaces with normalized $1$-unconditional bases, and $T\colon E(X)\to F(Y)$ is an operator, then for each $n,m\in\N$ we set
\begin{equation*}
    T_{n,m}=P_nTJ_m\in\mathscr{B}(X,Y).
\end{equation*}
We call $(T_{n,m})_{n,m\in\N}$ the \emph{matrix of} of $T$. Since the finitely supported vectors are dense in $E(X)$, this matrix determines $T$. More precisely, for every $x=(x_m)_{m\in\N}\in E(X)$ and every $n\in\N$, we have
\begin{equation*}
    P_nTx=\sum_{m=1}^{\infty}T_{n,m}x_m,
\end{equation*}
where the series converges in $Y$. If $(T_n)_{n\in\N}$ is a uniformly bounded sequence in $\mathscr{B}(X)$, then the formula
\begin{equation*}
    \diag(T_n:n\in\N)(x_n)_{n\in\N}=(T_nx_n)_{n\in\N}
\end{equation*}
defines an operator on $E(X)$, called a \emph{diagonal operator}.

\begin{definition}
    Let $T \colon E(X) \to E(X)$ be an operator with matrix of $(T_{n,m})_{n,m\in\N}$. We define the \emph{diagonal part} of $T$ to be the diagonal operator $D\colon E(X)\to E(X)$ given by
    \begin{equation*}
        D(x_n)_{n\in\N}=(T_{n,n}x_n)_{n\in\N}.
    \end{equation*}
    Equivalently, $D=\diag(T_{n,n}:n\in\N)$. 
\end{definition}

Observe that since $\sup_{n\in\N}\norm{T_{n,n}}\leq\norm{T}$, the diagonal part is indeed always an operator.

Let $c_{00}(X)$ denote the subspace of $E(X)$ consisting of finitely supported vectors.

\begin{definition}
    Let $T \colon E(X) \to E(X)$ be an operator with matrix of $(T_{n,m})_{n,m \in \N}$. We define the \emph{upper triangular part} and the \emph{lower triangular part} of $T$ to be the linear maps initially defined on $c_{00}(X)$ by the matrices
    \begin{equation*}
        U_{n,m}=\begin{cases} T_{n,m}, & n\leq m, \\ 0, & n>m, \end{cases}
        \qquad
        L_{n,m}=\begin{cases} T_{n,m}, & n\geq m, \\ 0, & n<m. \end{cases}
    \end{equation*}
\end{definition}

These maps are well defined on $c_{00}(X)$ and take values in $E(X)$. They need not, in general, extend to operators on $E(X)$. Since $c_{00}(X)$ is dense in $E(X)$, when one of them is bounded on $c_{00}(X)$, we use the same notation for its unique bounded extension to an operator on $E(X)$; in that case the other one also extends boundedly, since on $c_{00}(X)$ we have
\begin{equation*}
    T=U+L-D,
\end{equation*}
where $D$ denotes the diagonal part of $T$. Analogous considerations apply to the strictly upper and strictly lower triangular parts, obtained by keeping only the entries with $n<m$ and $n>m$, respectively.

The same terminology applies to operators on $E(M,X)$, with the order inherited from $\N$. Thus, if $M=\{m_j:j\in\N\}$ is written in increasing order, the upper triangular part keeps only the entries $P_{m_i}TJ_{m_j}$ with $i\leq j$, while the lower triangular part keeps only those with $i\geq j$.

\begin{remark}
    Throughout the paper, we leave the extension from $c_{00}(X)$ to $E(X)$ implicit and focus on the only problematic point, namely the boundedness of the corresponding triangular truncation. Equivalently, one may first regard these truncations as coordinatewise defined linear maps from $E(X)$ into $\prod_{n\in\N}X$. In each case where such a truncation is used, the estimates show that it actually takes values in $E(X)$ and, in fact, defines a bounded operator on $E(X)$.
\end{remark}

We shall also use the notation $\ell_\infty(X)$ in the usual sense, with norm
\begin{equation*}
    \norm{(x_n)_{n\in\N}}_{\ell_\infty(X)}=\sup_{n\in\N}\norm{x_n}.
\end{equation*}
For $\ell_\infty(X)$ we keep the notation $P_n,J_n,P_M,J_M$ and $T_{n,m}=P_nTJ_m$. However, one must be more careful: the matrix $(T_{n,m})_{n,m\in\N}$ does not, in general, determine the operator $T\in\mathscr{B}(\ell_\infty(X))$; see \cite[Example 2.3]{Acuaviva2026PrimarinesslpCK} for an easy example.

\begin{definition}
    Let $T\colon \ell_\infty(X) \to \ell_\infty(X)$ be an operator with operator matrix $(T_{n,m})_{n,m \in \N}$. We say that $T$ \emph{acts according to its matrix} if, for every $x=(x_n)_{n\in\N}\in\ell_\infty(X)$ and every $m\in\N$, the series
    \begin{equation*}
        \sum_{n=1}^{\infty}T_{m,n}x_n
    \end{equation*}
    norm converges in $X$ and
    \begin{equation*}
        P_mTx=\sum_{n=1}^{\infty}T_{m,n}x_n.
    \end{equation*}
\end{definition}

The same coordinatewise definition of diagonal operators will be used on $\ell_\infty(X)$. Thus, if $(T_n)_{n\in\N}$ is a uniformly bounded sequence in $\mathscr{B}(X)$, then
\begin{equation*}
    \diag(T_n:n\in\N)(x_n)_{n\in\N}=(T_nx_n)_{n\in\N}
\end{equation*}
defines an operator on $\ell_\infty(X)$, which acts according to its matrix.

We have a similar notion of diagonal and lower triangular parts for operators on $\ell_\infty(X)$. In this case these parts are genuine operators and act according to their matrices in the sense just defined.

\begin{definition}
    Let $T\colon\ell_\infty(X)\to\ell_\infty(X)$ be an operator, and let $(T_{n,m})_{n,m\in\N}$ be its matrix of. We define the \emph{diagonal part} $D$ of $T$ to be the operator $D\colon \ell_\infty(X)\to \ell_\infty(X)$ defined, for each $x=(x_n)_{n\in\N}\in\ell_\infty(X)$ and each $m\in\N$, by
    \begin{equation*}
        P_mDx=T_{m,m}x_m.
    \end{equation*}
    Similarly, we define the \emph{lower triangular part} $L$ of $T$ to be the operator $L\colon \ell_\infty(X)\to \ell_\infty(X)$ defined, for each $x=(x_n)_{n\in\N}\in\ell_\infty(X)$ and each $m\in\N$, by
    \begin{equation*}
        P_mLx=\sum_{n=1}^{m}T_{m,n}x_n.
    \end{equation*}
\end{definition}

Indeed, these formulae define operators, with $\norm{D}\leq\norm{T}$ and $\norm{L}\leq\norm{T}$. For the latter, observe that, for every $x\in\ell_\infty(X)$ and every $m\in\N$,
\begin{equation*}
    P_mLx=P_mTJ_{\{1,\ldots,m\}}P_{\{1,\ldots,m\}}x.
\end{equation*}

Moreover, both $D$ and $L$ act according to their matrices in the sense defined above. We shall not use the upper triangular part of an arbitrary operator on $\ell_\infty(X)$.

\subsection{Iterated vector-valued sequence spaces} When working with iterated sequence spaces $\ell_1(E(X))$ or $E(\ell_p(X))$, we distinguish the outer and inner coordinates. The coordinate projections and embeddings associated with the outer sequence space will be denoted by
\begin{equation*}
    \widehat{P}_{n},\widehat{P}_{M},\widehat{J}_{n},\widehat{J}_{M},
\end{equation*}
whereas the unadorned symbols $P_n,P_M,J_n,J_M$ refer to the inner sequence space. Thus, for example, if $T\colon \ell_1(E(X))\to\ell_1(E(X))$, then $\widehat{P}_{m}T\widehat{J}_{n}$ is an operator from $E(X)$ into $E(X)$.

Let $E$ and $F$ be Banach spaces belonging to the family $\{c_0\} \cup \{\ell_p: 1 \leq p \leq \infty\}$. If $\mathbf{n}=(n_j)_{j\in\N}$ and $\mathbf{m}=(m_j)_{j\in\N}$ are increasing sequences of positive integers, we define
\begin{equation*}
    J_{\mathbf{n},\mathbf{m}}\colon F(X)\to F(E(X))
\end{equation*}
as follows. Given $x=(x_j)_{j\in\N}\in F(X)$, the vector $J_{\mathbf{n},\mathbf{m}}x$ has outer coordinate $J_{m_j}x_j$ in position $n_j$, and zero in all other outer coordinates. That is,
\begin{equation*}
    \widehat{P}_{n_j}J_{\mathbf{n},\mathbf{m}}x=J_{m_j}x_j
\end{equation*}
for every $j\in\N$, while $\widehat{P}_{n}J_{\mathbf{n},\mathbf{m}}x=0$ whenever $n\notin\{n_j:j\in\N\}$.

We also define
\begin{equation*}
    P_{\mathbf{n},\mathbf{m}}\colon F(E(X))\to F(X)
\end{equation*}
by extracting the corresponding coordinates. Thus, if $y=(y_n)_{n\in\N}\in F(E(X))$, then
\begin{equation*}
    P_{\mathbf{n},\mathbf{m}}y=(P_{m_j}y_{n_j})_{j\in\N}.
\end{equation*}
Equivalently,
\begin{equation*}
    P_{\mathbf{n},\mathbf{m}}y=(P_{m_j}\widehat{P}_{n_j}y)_{j\in\N}.
\end{equation*}
\bigskip
\section{Upper triangular reduction}\label{sec:upper-triangular}

In this section we prove the upper-triangular reductions needed later for the diagonal reductions. The general mechanism is the same throughout: first one proves a forward reduction lemma, which allows us, after passing to suitable infinite subsets, to choose a coordinate whose image has arbitrarily small projection onto future coordinates; then a standard recursive selection turns these one-step estimates into an upper-triangular form.

We begin with the case of $\ell_1(X)$. The finite dual-cotype case is recorded in \Cref{prop:upper-triangular-ell1-dual-cotype}, while the self-similar case is obtained from the forward reduction in \Cref{lmm:forward-reduction-from-reprod-ell1} and the inductive construction in \Cref{prop:upper-triangular-reduction-from-reprod-ell1}. We then treat the $c_0(X)$ and $\ell_\infty(X)$ cases: finite cotype is handled in \Cref{lmm:cotype-forward-reduction} and \Cref{prop:upper-triangular-reduction-from-cotype-ellinfty}, and self-similarity in \Cref{lmm:forward-reduction-from-reprod-ellinfty} and \Cref{prop:upper-triangular-reduction-from-reprod-ellinfty}.

\subsection{The \texorpdfstring{$\ell_1$}{ell_1} case}

\subsubsection{Finite cotype of the dual space}

We first consider operators on $\ell_1(X)$ under the assumption that $X^*$ has finite cotype. The required upper-triangular reduction follows from the same compactness mechanism used in \cite{Acuaviva2026PrimarinesslpCK}; for completeness, we record the precise form needed here.

\begin{proposition}[Reduction to upper-triangular operators from dual cotype of the dual space]\label{prop:upper-triangular-ell1-dual-cotype}
    Let $X$ be a Banach space such that $X^*$ has finite cotype. Then, for every operator $T \colon \ell_1(X) \to \ell_1(X)$ and every $\varepsilon > 0$, there exists an infinite subset $M \subseteq \N$ such that
    \begin{equation*}
        \norm{P_M T J_M - U} < \varepsilon,
    \end{equation*}
    where $U \colon \ell_1(M, X) \to \ell_1(M, X)$ is the upper triangular part of $P_M T J_M$.
\end{proposition}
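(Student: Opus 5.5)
The idea is to show that the strictly lower triangular part $P_MTJ_M-U$ has small norm. It keeps the entries $P_{m_i}TJ_{m_j}$ with $i>j$. On $\ell_1$-sums the norm of an operator is controlled column by column, and by finite cotype of $X^*$ each single column has uniformly small tails. So $M$ can be chosen greedily, one element at a time, without passing to further subsequences.

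\emph{Step 1 (column reduction).} For an operator $R$ on $\ell_1(M,X)$ and $x=\sum_j J_{m_j}x_j$ finitely supported, we have $\norm{Rx}\le\sum_j\norm{RJ_{m_j}x_j}$. Hence $\norm{R}\le\sup_j\norm{RJ_{m_j}}$. For $R=P_MTJ_M-U$ and fixed $m\in\N$, set
\begin{equation*}
    f_m(N)=\sup_{\norm{x}\le1}\sum_{n\ge N}\norm{T_{n,m}x}.
\end{equation*}
This is finite, since it is at most $\norm{T}$, and it is nonincreasing in $N$. Then $\norm{RJ_{m_j}}\le f_{m_j}(m_{j+1})$. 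So it suffices to find an increasing sequence $(m_j)$ with $f_{m_j}(m_{j+1})<\varepsilon/2$ for all $j$. This also shows that the truncation is bounded on $c_{00}(X)$, so $U$ extends to an operator.

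\emph{Step 2 (the key claim: $f_m(N)\to0$ as $N\to\infty$ for each fixed $m$).} This is the only step where the cotype hypothesis enters, and I expect it to be the main point. Suppose instead that $f_m(N)\ge\delta>0$ for all $N$. Write $S=TJ_m\colon X\to\ell_1(X)$.

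Since $\sum_n\norm{T_{n,m}x}$ converges for each $x$, we can build recursively disjoint finite intervals $F_1<F_2<\cdots$ and unit vectors $x_k$ with $\norm{P_{F_k}Sx_k}\ge\delta/2$. Here $P_{F_k}$ is the coordinate projection onto $\ell_1(F_k,X)$. To do this, take $x_k$ witnessing $f_m(N_k)\ge\delta$ up to a small loss, truncate the tail at some $N_{k+1}$, and restart from $N_{k+1}$.

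Dualising, pick $y_k^*\in\ell_\infty(X^*)$ supported on $F_k$ with $\norm{y_k^*}\le1$ and $\langle y_k^*,Sx_k\rangle\ge\delta/2$. Then $\norm{S^*y_k^*}\ge\delta/2$. Because the supports are disjoint, $\norm{\sum_{k\le K}\pm y_k^*}_{\ell_\infty(X^*)}\le1$ for every choice of signs. Let $X^*$ have cotype $q<\infty$ with constant $C_q(X^*)$. Then
\begin{equation*}
    K^{1/q}\frac{\delta}{2}\le\Bigl(\sum_{k\le K}\norm{S^*y_k^*}^q\Bigr)^{1/q}\le C_q(X^*)\,\mathbb{E}\Bigl\|S^*\sum_{k\le K}\pm y_k^*\Bigr\|\le C_q(X^*)\norm{T}.
\end{equation*}
Letting $K\to\infty$ gives a contradiction.

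\emph{Step 3 (recursive choice of $M$).} Take $m_1=1$. Given $m_k$, Step 2 gives $N>m_k$ with $f_{m_k}(N)<\varepsilon/2$; put $m_{k+1}=N$. Then $M=\{m_j:j\in\N\}$ satisfies $\norm{P_MTJ_M-U}\le\sup_j f_{m_j}(m_{j+1})\le\varepsilon/2<\varepsilon$ by Step 1.
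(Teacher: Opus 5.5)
Your proof is correct, but it takes a different route from the paper's. The paper argues softly. Finite cotype of $X^*$ excludes a copy of $c_0$ in $X^*$, so by Pe{\l}czy\'nski's theorem every operator $\ell_\infty\to X^*$ is weakly compact. By Gantmacher's theorem, every operator $S\colon X\to\ell_1$ is then weakly compact, and hence compact by the Schur property. The reduction itself is quoted from \cite[Proposition~3.5]{Acuaviva2026PrimarinesslpCK}.

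You instead prove directly that each column $TJ_m\colon X\to\ell_1(X)$ has uniformly vanishing tails, $\norm{P_{[N,\infty)}TJ_m}\to0$. You do this by the same Rademacher-averaging mechanism as in \Cref{lmm:dual-cotype-backward-reduction-ell1}, applied to disjointly supported functionals in $\ell_\infty(X^*)=\ell_1(X)^*$. (Whichever normalisation of the cotype constant you use, the averaged quantity is bounded pointwise by $\norm{T}$, so nothing is lost there.) You then use that on $\ell_1$-sums the norm of the strictly lower triangular part is the supremum of its column norms. So a greedy, sufficiently sparse choice of $M$ suffices, with no nested subsequences or pigeonholing.

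Your argument is self-contained, quantitative, and in keeping with the methods used elsewhere in the paper. The paper's route buys a weaker hypothesis, since it only uses that $X^*$ contains no copy of $c_0$. The two approaches are linked: compactness of all operators $X\to\ell_1$ already implies your Step~2. If the tails did not vanish, composing $TJ_m$ coordinatewise with norming functionals for $T_{n,m}x_k$, $n\in F_k$, on your blocks would produce an operator $X\to\ell_1$ with $\norm{P_{[N,\infty)}R}\ge\delta/2$ for all $N$, which cannot be compact. Hence your Steps~1 and~3 would go through verbatim under the paper's weaker assumption.
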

\begin{proof}
    Since $X^*$ has finite cotype, it contains no copy of $c_0$. Hence, by a classical theorem of Pe{\l}czy\'nski \cite{pelczynski1965strictly}, every operator from $\ell_\infty = C(\beta\mathbb{N})$ into $X^*$ is weakly compact. Therefore $S^*\colon \ell_\infty\to X^*$ is weakly compact for any $S\colon X\to\ell_1$. By Gantmacher's theorem, $S$ is weakly compact. Since $\ell_1$ has the Schur property, weakly compact subsets of $\ell_1$ are norm compact. Hence $S$ is compact. The result is now a particular instance of \cite[Proposition 3.5]{Acuaviva2026PrimarinesslpCK}.
\end{proof}

\subsubsection{Self-similarity of the space}

Now we move to the case when $X \simeq \ell_p(X)$ for some $1 < p \leq \infty$ or $X \simeq c_0(X)$. Before we can prove the upper-triangular reduction, we will need the following lemma.

\begin{lemma}[Forward reduction from self-similarity]\label{lmm:forward-reduction-from-reprod-ell1}
    Let $E = \ell_p$ for $1 < p \leq \infty$ or $E = c_0$, $X$ and $Y$ be Banach spaces and $T\colon E(X) \to \ell_1(Y)$ be an operator. Then, for every $\varepsilon > 0$, there exist a pair of infinite sets $A, M \subseteq \N$ such that $\norm{P_M T J_A}<\varepsilon$.
\end{lemma}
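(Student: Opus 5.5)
The plan is to argue by contradiction. We exploit the incompatibility between the upper $\ell_p$-estimate on disjointly supported vectors in $E(X)$ and the additivity of the $\ell_1(Y)$-norm over disjoint blocks of output coordinates. So suppose that there is $\varepsilon>0$ such that $\norm{P_M T J_A}\geq\varepsilon$ for every pair of infinite sets $A,M\subseteq\N$. Set $r=p$ if $E=\ell_p$ with $1<p<\infty$, and $r=\infty$ if $E\in\{c_0,\ell_\infty\}$.

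First, I fix once and for all a partition $\N=\bigcup_{k\in\N}A_k$ into infinitely many pairwise disjoint infinite sets. I then construct, recursively, unit vectors $x_k\in E(A_k,X)$ and finite sets $F_1<F_2<\cdots$ in $\N$ with
\begin{equation*}
\norm{P_{F_k}Tx_k}_{\ell_1(Y)}\geq \varepsilon/2 .
\end{equation*}
Suppose $F_1,\ldots,F_{k-1}$ have been chosen. Let $M_k=\{n\in\N:n>\max F_{k-1}\}$, with $M_1=\N$. By hypothesis $\norm{P_{M_k}TJ_{A_k}}\geq\varepsilon$, so we can pick a unit vector $x_k\in E(A_k,X)$ with $\norm{P_{M_k}Tx_k}>3\varepsilon/4$. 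Since $P_{M_k}Tx_k$ lies in $\ell_1(M_k,Y)$, its norm is the sum of the norms of its coordinates. Hence some finite initial segment $F_k\subseteq M_k$ satisfies $\norm{P_{F_k}Tx_k}\geq\varepsilon/2$. The argument never needs the $x_k$ to be finitely supported; only disjointness of supports is used. This matters for $E=\ell_\infty$, where finitely supported vectors are not dense.

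Now fix $N\in\N$ and signs $\theta=(\theta_j)_{j=1}^N\in\{\pm1\}^N$. Since the $x_j$ have pairwise disjoint supports and the basis is $1$-unconditional,
\begin{equation*}
\Bigl\|\sum_{j=1}^N\theta_jx_j\Bigr\|_{E(X)}\leq N^{1/r}.
\end{equation*}
Since the $F_k$ are pairwise disjoint,
\begin{equation*}
\norm{T}N^{1/r}\;\geq\;\Bigl\|T\sum_{j}\theta_jx_j\Bigr\|_{\ell_1(Y)}\;\geq\;\sum_{k=1}^N\Bigl\|P_{F_k}T\sum_{j}\theta_jx_j\Bigr\|.
\end{equation*}
Next we average over all $\theta$. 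For each fixed $k$, we pair $\theta$ with the sign vector obtained by flipping every $\theta_j$ with $j\neq k$. The triangle inequality then gives
\begin{equation*}
\mathbb{E}_\theta\Bigl\|P_{F_k}T\sum_{j}\theta_jx_j\Bigr\|\geq\norm{P_{F_k}Tx_k}\geq\varepsilon/2.
\end{equation*}
Summing over $k$ yields $N\varepsilon/2\leq\norm{T}N^{1/r}$ for every $N$. This is impossible for large $N$ because $r>1$, which is the desired contradiction.

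The step I expect to need care is the averaging step. It is what removes the need to control the cross terms $P_{F_k}Tx_j$ for $j\neq k$. Without it one would need some gliding-hump smallness of these terms, and that is not available when $Y$ is infinite dimensional and $T$ is only bounded. The other delicate point is the recursive choice: at stage $k$ the hypothesis is applied to the pair $(A_k,M_k)$, where $M_k$ is cofinite and hence infinite. This is what makes the blocks $F_k$ disjoint while keeping the input vectors disjointly supported.
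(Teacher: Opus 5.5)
Your proof is correct and uses the same mechanism as the paper's: sign-averaging over disjointly supported inputs, additivity of the $\ell_1(Y)$-norm over disjoint output blocks, and a comparison of $N\varepsilon/2$ with $\norm{T}N^{1/p}$ (or with $\norm{T}$ for $c_0$ and $\ell_\infty$). The paper avoids your recursion and truncation by fixing $N$ first and applying the hypothesis directly to pairs $(A_n,M_n)$ from two partitions of $\N$ into $N$ infinite sets, and it isolates the diagonal terms with norming functionals rather than your sign-flip pairing; your recursive version shows in addition that $M$ may be taken to be a tail $\{n\in\N:n>n_0\}$.
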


\begin{proof}
    We first do the case $E = \ell_p$ for $1 < p < \infty$. Proceed by contradiction and suppose that the statement is false. Then there is $\varepsilon>0$ such that, for every pair of infinite sets $A, M\subseteq\N$, we have $\norm{P_MTJ_A}\geq \varepsilon$. Choose $N \in \N$ such that $N^{(p-1)/p} \varepsilon > 2\norm{T}$, and partition $\N$ into infinite pairwise disjoint sets $A_1, \dots, A_N$ and $M_1,\dots,M_N$ respectively.

    By assumption, for each $n=1,\dots,N$ we have $\norm{P_{M_n}TJ_{A_n}} \geq \varepsilon$. Hence, we may choose $x_n \in B_{\ell_p(A_n, X)}$ and $y^*_n\in B_{\ell_1(M_n,Y)^*}$ such that 
    \begin{equation*}
        |y^*_nP_{M_n} TJ_{A_n} x_n|> \varepsilon/2.
    \end{equation*}

    For each choice of signs $\theta = (\theta_1,\dots,\theta_N) \in \{-1,1\}^N$, set
    \begin{equation*}
        x_\theta=\sum_{n=1}^N \theta_{n} J_{A_n}x_{n}.
    \end{equation*}
    Since the vectors $J_{A_n}x_{n}$ have disjoint supports in $\ell_p(X)$, we have $\norm{x_\theta}\leq N^{1/p}$. Thus $\norm{Tx_\theta}\leq \norm{T} N^{1/p}$ for every choice of signs, and averaging gives
    \begin{equation*}
        \norm{T} N^{1/p} \geq \mathbb{E}_\theta [\norm{Tx_\theta}].
    \end{equation*}
    Since the sets $M_1,\dots,M_N$ form a partition of $\N$, we have
    \begin{equation*}
        \mathbb{E}_\theta [\norm{Tx_\theta}] = \sum_{n=1}^N \mathbb{E}_\theta [ \norm{P_{M_n}Tx_\theta}] \geq \sum_{n=1}^N |\mathbb{E}_\theta [\theta_n y_n^*P_{M_n}Tx_\theta]|.
    \end{equation*}
    For each fixed $n$, since $\mathbb{E}_\theta[\theta_n \theta_m] = 0$ if $n \not = m$ and $\mathbb{E}_\theta[\theta_n^2] = 1$, we have
    \begin{equation*}
        |\mathbb{E}_\theta [\theta_n y_n^*P_{M_n}Tx_\theta]| = |y^*_nP_{M_n} TJ_{A_n} x_n|>\varepsilon/2.
    \end{equation*}
    Therefore
    \begin{equation*}
        \norm{T} N^{1/p}  \geq \mathbb{E}_\theta [\norm{Tx_\theta}] > N \varepsilon/2,
    \end{equation*}
    contradicting the choice of $N$. This contradiction proves the result.

    The proofs for the cases $E = \ell_\infty$ and $E = c_0$ are identical, now choosing $N$ so that $N \varepsilon > 2 \norm{T}$ and observing that $\norm{x_\theta} \leq 1$ in this case.
\end{proof}

We are now ready to prove an upper-triangular reduction in the case of $\ell_p$ or $c_0$ self-similarity.

\begin{proposition}[Reduction to upper-triangular operators from self-similarity]\label{prop:upper-triangular-reduction-from-reprod-ell1}
    Let $E = \ell_p$ for $1 < p \leq \infty$ or $E = c_0$, $X$ be a Banach space and $T\colon \ell_1(E(X))\to \ell_1(E(X))$ be an operator. Then, for every $\varepsilon > 0$, there exist an increasing sequence $\mathbf{n}=(n_j)_{j\in\N}$ and infinite sets $(A_j)_{j \in \N}$ so that for all $j \in \N$ we have
    \begin{equation*}
        \norm{\widehat{P}_{M_j} T \widehat{J}_{n_j} J_{A_j}} < \varepsilon/2^{j+1},
    \end{equation*}
    where $M_j = \{n_i: i > j \}$.
\end{proposition}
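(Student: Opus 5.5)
We build $\mathbf{n}$ and the sets $A_j$ recursively. At each step we apply \Cref{lmm:forward-reduction-from-reprod-ell1} to a suitable restriction of $T$, keeping an infinite reservoir of outer indices from which all future $n_i$ will be drawn. The point is that the condition at stage $j$ only involves outer coordinates $n_i$ with $i>j$. It therefore suffices to fix $n_j$ and $A_j$, and to shrink the reservoir to an infinite set $N_j$ with $\norm{\widehat{P}_{N_j} T\widehat{J}_{n_j}J_{A_j}}<\varepsilon/2^{j+1}$. All later $n_i$ are then chosen from $N_j$. Since $\widehat{P}_{M}$ is a norm-one coordinate restriction whenever $M\subseteq N_j$, the final estimate follows by monotonicity, using $\norm{\widehat{P}_{M}S}\le\norm{\widehat{P}_{N}S}$ for $M\subseteq N$ in $\ell_1$-sums.

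\textbf{Base step.} Put $N_0=\N$ and $n_1=\min N_0$. Consider the operator
\begin{equation*}
S_1=\widehat{P}_{N_0\setminus\{n_1\}}T\widehat{J}_{n_1}\colon E(X)\to \ell_1(N_0\setminus\{n_1\},E(X)).
\end{equation*}
After relabelling, its codomain is $\ell_1(Y)$ with $Y=E(X)$, indexed by the infinite set $N_0\setminus\{n_1\}$. We apply \Cref{lmm:forward-reduction-from-reprod-ell1} to $S_1$ with tolerance $\varepsilon/2^{2}$. This gives infinite sets $A_1\subseteq\N$ and $N_1\subseteq N_0\setminus\{n_1\}$ with $\norm{\widehat{P}_{N_1}S_1J_{A_1}}<\varepsilon/2^2$, that is, $\norm{\widehat{P}_{N_1}T\widehat{J}_{n_1}J_{A_1}}<\varepsilon/2^{2}$. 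In the lemma, the set $M$ lives in the index set of the $\ell_1$-sum. Because that index set is any countably infinite set, the conclusion transfers to subsets of $N_0\setminus\{n_1\}$.

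\textbf{Inductive step.} Given $n_1<\dots<n_j$, sets $A_1,\dots,A_j$, and a decreasing chain of infinite sets $N_1\supseteq\dots\supseteq N_j$ with $n_{i+1}\in N_i$, we set $n_{j+1}=\min N_j$. We then apply the lemma to $\widehat{P}_{N_j\setminus\{n_{j+1}\}}T\widehat{J}_{n_{j+1}}$ with tolerance $\varepsilon/2^{j+2}$. This produces $A_{j+1}$ and an infinite set $N_{j+1}\subseteq N_j\setminus\{n_{j+1}\}$. The sequence is increasing, since $n_{j+1}=\min N_j$ and the sets $N_j$ are nested with the previous minima removed. Moreover, $M_j=\{n_i:i>j\}\subseteq N_j$, because each later $n_i$ is the minimum of some $N_{i-1}\subseteq N_j$. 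Hence
\begin{equation*}
\norm{\widehat{P}_{M_j}T\widehat{J}_{n_j}J_{A_j}}\le\norm{\widehat{P}_{N_j}T\widehat{J}_{n_j}J_{A_j}}<\varepsilon/2^{j+1}.
\end{equation*}

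\textbf{Main obstacle.} The only nonroutine ingredient is already contained in \Cref{lmm:forward-reduction-from-reprod-ell1}, where the cotype-type contrast between $E$ and $\ell_1$ is used. The remaining care is bookkeeping: the lemma must be applied with the outer $\ell_1$-sum indexed by an arbitrary infinite subset of $\N$, which is harmless by relabelling. We must also keep the reservoir $N_j$ nested so that later choices never spoil earlier estimates. The sets $A_j$ need no compatibility across different $j$, since each one lives in the inner space attached to a different outer coordinate $n_j$.
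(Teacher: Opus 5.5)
Your proof is correct and is essentially the paper's argument. Both build $\mathbf{n}$ recursively, at each stage applying \Cref{lmm:forward-reduction-from-reprod-ell1} to $\widehat{P}_{L}T\widehat{J}_{n_j}$, where $L$ is the nested reservoir of indices beyond $n_j$, and both conclude from $M_j\subseteq N_j$ together with monotonicity of the coordinate projections. The only cosmetic difference is that you fix $n_{j+1}=\min N_j$, whereas the paper allows any element of the current reservoir.
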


\begin{proof}
    We recursively construct an increasing sequence $(n_j)_{j\in\N}$ and infinite subsets $(A_j)_{j\in\N}$ and $(\widetilde M_j)_{j\in\N}$ of $\N$ such that, for every $j\in\N$,
    \begin{equation*}
        n_j\in \widetilde M_{j-1}, \qquad \widetilde M_j\subseteq \widetilde M_{j-1}\cap\{n\in\N:n>n_j\},
    \end{equation*}
    and
    \begin{equation*}
        \norm{\widehat{P}_{\widetilde M_j}T\widehat{J}_{n_j}J_{A_j}} < \varepsilon/2^{j+1},
    \end{equation*}
    where $\widetilde M_0=\N$.

    Suppose that the construction has been carried out to stage $j-1$, in particular $\widetilde M_{j-1}$ has been constructed. Choose $n_j\in \widetilde M_{j-1}$, and set
    \begin{equation*}
        L_j=\widetilde M_{j-1}\cap\{n\in\N:n>n_j\}.
    \end{equation*}
    Then $L_j$ is infinite. Applying \Cref{lmm:forward-reduction-from-reprod-ell1} to the operator
    \begin{equation*}
        \widehat{P}_{L_j}T\widehat{J}_{n_j}\colon E(X)\to \ell_1(L_j, E(X)),
    \end{equation*}
    after identifying $\ell_1(L_j,E(X))$ with $\ell_1(E(X))$, we obtain infinite sets $A_j\subseteq\N$ and $\widetilde M_j\subseteq L_j$ such that
    \begin{equation*}
        \norm{\widehat{P}_{\widetilde M_j}T\widehat{J}_{n_j}J_{A_j}} < \varepsilon/2^{j+1}.
    \end{equation*}
    This completes the recursive construction.

    By construction, $(n_j)_{j\in\N}$ is strictly increasing. Moreover, if
    \begin{equation*}
        M_j=\{n_i:i>j\},
    \end{equation*}
    then $M_j\subseteq \widetilde M_j$ for every $j\in\N$. Hence
    \begin{equation*}
        \widehat{P}_{M_j}T\widehat{J}_{n_j}J_{A_j} = \widehat{P}_{M_j}\widehat{P}_{\widetilde M_j}T\widehat{J}_{n_j}J_{A_j},
    \end{equation*}
    and therefore
    \begin{equation*}
        \norm{\widehat{P}_{M_j}T\widehat{J}_{n_j}J_{A_j}} \leq \norm{\widehat{P}_{\widetilde M_j}T\widehat{J}_{n_j}J_{A_j}} < \varepsilon/2^{j+1}.
    \end{equation*}
    This proves the result.
\end{proof}

\subsection{The \texorpdfstring{$\ell_\infty$ and $c_0$}{ell-infinity and c0} cases}

\subsubsection{Finite cotype}

We consider now the upper-triangular reduction in the case of $c_0$ and $\ell_\infty$ sums, we start with the case of finite cotype. We shall need the following lemma.

\begin{lemma}[Forward reduction from cotype]\label{lmm:cotype-forward-reduction}
    Let $X$ be a Banach space with cotype $q<\infty$, $E$ denote either $c_0$ or $\ell_\infty$, $\varepsilon>0$, and $T\colon \ell_\infty^N(X)\to E(X)$ be an operator. Denote by $C_q(X)$ the cotype constant of X and suppose that
    \begin{equation*}
        N^{1/q}\varepsilon> 2 C_q(X)\norm{T}.
    \end{equation*}
    Then there exist $1\leq n\leq N$ and an infinite subset $M\subseteq \mathbb{N}$ such that
    \begin{equation*}
        \norm{P_m T J_n}<\varepsilon \qquad \text{for all } m\in M.
    \end{equation*}
\end{lemma}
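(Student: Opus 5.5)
The argument is by contradiction, mirroring the Rademacher averaging in the proof of \Cref{lmm:forward-reduction-from-reprod-ell1}, with cotype of $X$ replacing the external $\ell_1$ geometry. Suppose the conclusion fails. Then for every $1\leq n\leq N$ the set $\{m\in\N:\norm{P_mTJ_n}<\varepsilon\}$ is finite. Consequently the set $F_n=\{m:\norm{P_mTJ_n}\geq\varepsilon\}$ is cofinite for each $n$, and the intersection $F=\bigcap_{n=1}^N F_n$ is cofinite, in particular nonempty. Fix a single $m\in F$. Then $\norm{P_mTJ_n}\geq\varepsilon$ for all $n=1,\dots,N$ simultaneously, so we may choose $x_n\in B_X$ with $\norm{P_mTJ_nx_n}>\varepsilon/2$ for each $n$.

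For signs $\theta\in\{-1,1\}^N$, set $x_\theta=\sum_{n=1}^N\theta_nJ_nx_n\in\ell_\infty^N(X)$. Since we are in $\ell_\infty^N(X)$, we have $\norm{x_\theta}\leq 1$. Hence
\begin{equation*}
\norm{P_mTx_\theta}\leq\norm{T}
\end{equation*}
for every $\theta$, since $\norm{P_m}\leq 1$ on $E(X)$. On the other hand, $P_mTx_\theta=\sum_n\theta_n y_n$ with $y_n=P_mTJ_nx_n\in X$. The cotype $q$ inequality, in its averaged form with Rademacher signs, gives
\begin{equation*}
N^{1/q}\varepsilon/2<\Big(\sum_{n=1}^N\norm{y_n}^q\Big)^{1/q}\leq C_q(X)\,\mathbb{E}_\theta\Big\|\sum_{n}\theta_ny_n\Big\|\leq C_q(X)\norm{T}.
\end{equation*}
This contradicts $N^{1/q}\varepsilon>2C_q(X)\norm{T}$.

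The only point needing care is the convention for the cotype constant, namely whether it is defined with the $L_1$ or the $L_2$ average of $\norm{\sum\theta_ny_n}$. Since $\norm{\sum_n\theta_ny_n}\leq\norm{T}$ holds pointwise in $\theta$, any $L_r$ average is bounded by $\norm{T}$, so the estimate holds regardless of which normalisation is used. The key observation, and the only genuinely non-routine step, is that finiteness of each exceptional set lets us find one common row $m$ where all $N$ columns are large. The external sup-norm then lets the $N$ signed vectors coexist with norm at most $1$, while cotype forces their images in that row to be large.
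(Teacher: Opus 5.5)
Your proof is correct and is essentially the paper's argument: by contradiction you find a single row $m$ where all $N$ columns have norm at least $\varepsilon$, average the signed vectors $x_\theta$ in $\ell_\infty^N(X)$, and apply the cotype inequality in that row. The paper uses the $L_q$-average $\mathbb{E}_\theta\norm{\cdot}^q$ rather than your $L_1$-average, but, as you note, the pointwise bound $\norm{P_mTx_\theta}\leq\norm{T}$ makes the choice of normalisation irrelevant.
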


\begin{proof}
    We proceed by contradiction and assume that the statement is false. Then, for every $1\leq n\leq N$, the set
    \begin{equation*}
        \{m\in \mathbb{N}:\norm{P_m T J_n} < \varepsilon\}
    \end{equation*}
    is finite. Hence we may choose $m\in \mathbb{N}$ such that for all $1 \leq n \leq N$ we have $\norm{P_m T J_n}\geq\varepsilon$. For each $n$, choose $x_n\in B_X$ such that
    \begin{equation*}
        \norm{P_mTJ_nx_n}>\varepsilon/2.
    \end{equation*}
    For each choice of signs $\theta=(\theta_1,\ldots,\theta_N)\in\{-1,1\}^N$, put
    \begin{equation*}
        x_\theta=\sum_{n=1}^N\theta_nJ_nx_n.
    \end{equation*}
    Naturally $\norm{x_\theta}_{\ell_\infty^N(X)}\leq 1$ so that
    \begin{equation*}
        \norm{T}^q\geq \mathbb{E}_\theta[\norm{Tx_\theta}^q]\geq \mathbb{E}_\theta[\norm{P_mTx_\theta}^q].
    \end{equation*}
    Using the cotype $q$ estimate for $X$ we obtain
    \begin{align*}
        \mathbb{E}_\theta [\norm{P_mTx_\theta}^q] &=\mathbb{E}_\theta \left[\left\|\sum_{n=1}^N\theta_nP_mTJ_nx_n\right\|^q \right]\geq C_q(X)^{-q}\sum_{n=1}^N\norm{P_mTJ_nx_n}^q \\
        &> C_q(X)^{-q}N(\varepsilon/2)^q.
    \end{align*}
    Thus
    \begin{equation*}
        N^{1/q}\varepsilon<2C_q(X)\norm{T},
    \end{equation*}
    contradicting the hypothesis. This contradiction proves the result.
\end{proof}

\begin{remark}
    The point of the previous lemma is not merely the coordinatewise conclusion, but the fact that for $E=c_0$ or $E=\ell_\infty$ one has
    \begin{equation*}
        \norm{P_M T J_n}
        =
        \sup_{m\in M}\norm{P_m T J_n}.
    \end{equation*}
    Thus, coordinatewise smallness on $M$ gives smallness of $P_MTJ_n$. Similar coordinatewise forward reductions can be proved for other $\ell_p$-sums, but they do not by themselves give the required estimate for $P_MTJ_n$, since the target norm is no longer a supremum norm.
\end{remark}

We can now give the upper triangular reduction in the case of finite cotype.

\begin{proposition}[Reduction to upper-triangular operators from cotype]\label{prop:upper-triangular-reduction-from-cotype-ellinfty}
    Let $E=c_0$ or $E=\ell_\infty$, $X$ be a Banach space with finite cotype, and $T\colon E(X)\to E(X)$ be an operator. Then, for every $\varepsilon>0$, there exists an infinite subset $M\subseteq\N$ such that, for every $m\in M$,
    \begin{equation*}
        \sum_{\substack{n<m\\ n\in M}}\norm{P_mTJ_n}<\varepsilon.
    \end{equation*}
\end{proposition}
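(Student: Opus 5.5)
The plan is a diagonal recursion driven by \Cref{lmm:cotype-forward-reduction}. The lemma lets us pick, out of any $N$ candidate columns with $N$ large enough, one column $n$ whose entries $P_mTJ_n$ are small for all rows $m$ in some infinite set. We build $M=\{n_1<n_2<\cdots\}$ so that each chosen column $n_j$ has entries smaller than $\varepsilon/2^{j+1}$ in every later row. The required row sums over earlier columns are then dominated by a geometric series.

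Let $q<\infty$ be a cotype of $X$. For each $j$ put $\varepsilon_j=\varepsilon/2^{j+1}$ and choose $N_j\in\N$ with $N_j^{1/q}\varepsilon_j>2C_q(X)\norm{T}$. Set $L_0=\N$. Suppose an infinite set $L_{j-1}\subseteq\N$ has been constructed.
\begin{enumerate}
\item Let $F_j$ consist of the first $N_j$ elements of $L_{j-1}$. We may arrange $\min L_{j-1}>n_{j-1}$, so every element of $F_j$ exceeds $n_{j-1}$.
\item Identify $E(F_j,X)$ with $\ell_\infty^{N_j}(X)$; this is an isometry for both $E=c_0$ and $E=\ell_\infty$. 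Identify $E(L_{j-1},X)$ with $E(X)$ via the increasing enumeration of $L_{j-1}$.
\item Apply \Cref{lmm:cotype-forward-reduction} to $S_j=P_{L_{j-1}}TJ_{F_j}\colon \ell_\infty^{N_j}(X)\to E(X)$. Since $\norm{S_j}\le\norm{T}$, the hypothesis $N_j^{1/q}\varepsilon_j>2C_q(X)\norm{S_j}$ holds.
\item The lemma gives $n_j\in F_j$ and an infinite $M'_j\subseteq L_{j-1}$ with $\norm{P_mTJ_{n_j}}<\varepsilon_j$ for all $m\in M'_j$. Set $L_j=M'_j\cap\{m:m>n_j\}$, which is infinite.
\end{enumerate}

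By construction $n_j<n_{j+1}$, since $n_{j+1}\in L_j$. Also $L_0\supseteq L_1\supseteq\cdots$, so $n_k\in L_{k-1}\subseteq L_i\subseteq M'_i$ whenever $i<k$. Put $M=\{n_j:j\in\N\}$. For $m=n_k\in M$ we get
\begin{equation*}
    \sum_{\substack{n<m\\ n\in M}}\norm{P_mTJ_n}=\sum_{i<k}\norm{P_{n_k}TJ_{n_i}}<\sum_{i<k}\varepsilon/2^{i+1}<\varepsilon.
\end{equation*}

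The only delicate step is the bookkeeping that makes the lemma's output usable. We apply it to $P_{L_{j-1}}TJ_{F_j}$, not to $TJ_{F_j}$, so that the good infinite row set lies inside the current reservoir $L_{j-1}$. We take $F_j\subseteq L_{j-1}$ so that the chosen column $n_j$ belongs to every earlier good set $M'_i$, $i<j$. Finally, $N_j$ must grow with $j$ to compensate for the shrinking tolerances $\varepsilon_j$. No boundedness of triangular truncations is needed here: the statement only concerns individual matrix entries.
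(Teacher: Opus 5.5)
Your proof is correct and matches the paper's: the same recursion applies \Cref{lmm:cotype-forward-reduction} to a block of $N_j$ candidate columns drawn from the current reservoir, with tolerances $\varepsilon/2^{j+1}$ summed geometrically. The only difference is bookkeeping. The paper restricts the target rows to those beyond the whole candidate block, whereas you keep the block rows and then trim to $m>n_j$; either way one gets the nesting $n_k\in M'_i$ for all $i<k$.
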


\begin{proof}
    Let $q<\infty$ be such that $X$ has cotype $q$, and let $C_q(X)$ denote the cotype $q$ constant of $X$. For each $j\in\N$, choose $N_j\in\N$ such that
    \begin{equation*}
        N_j^{1/q}\varepsilon/2^{j+1}>2C_q(X)\norm{T}.
    \end{equation*}

    Using \Cref{lmm:cotype-forward-reduction}, we recursively construct an increasing sequence $(n_j)_{j\in\N}$ and a decreasing sequence $(M_j)_{j\in\N_0}$ of infinite subsets of $\N$, with $M_0=\N$, such that $n_j\in M_{j-1}$, $M_j\subseteq M_{j-1}\cap\{m\in\N:m>n_j\}$, and for all $m \in M_j$ we have
    \begin{equation*}
        \norm{P_mTJ_{n_j}}<\varepsilon/2^{j+1}.
    \end{equation*}

    Suppose that $M_{j-1}$ has been constructed. Choose distinct elements $a_1<\ldots<a_{N_j}$ of $M_{j-1}$, and set
    \begin{equation*}
        L_j=M_{j-1}\cap\{m\in\N:m>a_{N_j}\}.
    \end{equation*}
    Put $A=\{a_1,\ldots,a_{N_j}\}$, and define $J_A\colon \ell_\infty^{N_j}(X)\to E(X)$ by
    \begin{equation*}
        J_A(x_1,\ldots,x_{N_j})=\sum_{s=1}^{N_j}J_{a_s}x_s.
    \end{equation*}
    Apply \Cref{lmm:cotype-forward-reduction} to the operator
    \begin{equation*}
        P_{L_j}TJ_A\colon \ell_\infty^{N_j}(X)\to E(L_j,X),
    \end{equation*}
    where we identify $E(L_j,X)$ with $E(X)$. Since $\norm{P_{L_j}TJ_A}\leq\norm{T}$, we obtain $1\leq s_j\leq N_j$ and an infinite set $M_j\subseteq L_j$ such that, for all $m\in M_j$, we have
    \begin{equation*}
        \norm{P_mTJ_{a_{s_j}}}<\varepsilon/2^{j+1}.
    \end{equation*}
    Set $n_j=a_{s_j}$. This completes the recursive construction.

    Let $M=\{n_j:j\in\N\}$. Fix $j\in\N$. If $i<j$, then $n_j\in M_i$, and therefore
    \begin{equation*}
        \norm{P_{n_j}TJ_{n_i}}<\varepsilon/2^{i+1}.
    \end{equation*}
    Hence
    \begin{equation*}
        \sum_{\substack{n<n_j\\ n\in M}}\norm{P_{n_j}TJ_n} = \sum_{i<j}\norm{P_{n_j}TJ_{n_i}} < \sum_{i<j}\varepsilon/2^{i+1} < \varepsilon.
    \end{equation*}
    Since $j\in\N$ was arbitrary, the proof is complete.
\end{proof}

The conclusion of \Cref{prop:upper-triangular-reduction-from-cotype-ellinfty} should be compared with the upper-triangular reduction obtained for $\ell_1(X)$ in \Cref{prop:upper-triangular-ell1-dual-cotype}. There, one obtains norm approximation of a suitable compression by its upper triangular part. In the present setting, and especially for $\ell_\infty(X)$, matrix representations require more care; for this reason we only record the weaker entry-wise estimate above. This is the form of upper-triangular reduction that will be needed.

\subsubsection{Self-similarity of the space}

We now argue that we can get a similar upper-triangular reduction in the case of $\ell_p$-self-similarity. We need the following preliminary result.

\begin{lemma}[Forward reduction from self-similarity]\label{lmm:forward-reduction-from-reprod-ellinfty}
    Let $1 \leq p < \infty$, $E = c_0$ or $E = \ell_\infty$, $X$ and $Y$ be Banach spaces, and $T\colon \ell_\infty^N(Y) \to E(\ell_p(X))$ be an operator. Suppose that
    \begin{equation*}
        N^{1/p}\varepsilon>2\norm{T}.
    \end{equation*}
    Then there exist $1\leq n\leq N$, an infinite set $M\subseteq\N$, and, for each $m\in M$, an infinite set $A_m\subseteq\N$ such that
    \begin{equation*}
        \norm{P_{A_m}\widehat{P}_mTJ_n}<\varepsilon \qquad \text{for all } m\in M.
    \end{equation*}
\end{lemma}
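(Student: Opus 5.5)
We argue by contradiction, combining the sign-averaging mechanism of \Cref{lmm:forward-reduction-from-reprod-ell1} with the coordinate-selection pattern of \Cref{lmm:cotype-forward-reduction}. The point is that the inner $\ell_p$ norm grows like $N^{1/p}$ on $N$ disjointly supported unit vectors, whereas the domain $\ell_\infty^N(Y)$ only sees norm $1$.

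Suppose the statement fails. Then for every $1\le n\le N$, the set of $m\in\N$ for which there is an infinite $A\subseteq\N$ with $\norm{P_A\widehat P_mTJ_n}<\varepsilon$ is finite; otherwise that set would serve as $M$, with the corresponding $A$'s as the $A_m$. Hence we may fix one outer coordinate $m$ such that, for every $n\le N$ and every infinite $A\subseteq\N$,
\begin{equation*}
    \norm{P_A\widehat P_mTJ_n}\ge\varepsilon .
\end{equation*}
Set $S_n=\widehat P_mTJ_n\colon Y\to\ell_p(X)$.

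Next, partition $\N$ into pairwise disjoint infinite sets $B_1,\dots,B_N$. By the displayed inequality, for each $n$ we may pick $y_n\in B_Y$ with $\norm{P_{B_n}S_ny_n}>\varepsilon/2$. For signs $\theta\in\{-1,1\}^N$ put
\begin{equation*}
    y_\theta=\sum_{n=1}^N\theta_nJ_ny_n,
\end{equation*}
so that $\norm{y_\theta}_{\ell_\infty^N(Y)}\le1$ and hence $\norm{\widehat P_mTy_\theta}\le\norm{T}$.

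We then estimate from below using the disjointness of the $B_n$ in $\ell_p(X)$:
\begin{equation*}
    \norm{T}^p\ge\mathbb E_\theta\norm{\widehat P_mTy_\theta}^p\ge\sum_{n=1}^N\mathbb E_\theta\norm{P_{B_n}\widehat P_mTy_\theta}^p .
\end{equation*}
For each fixed $n$, the map $z\mapsto P_{B_n}z$ is contractive, and averaging against $\theta_n$ isolates the $n$-th term. Indeed, $\mathbb E_\theta[\theta_n P_{B_n}\widehat P_mTy_\theta]=P_{B_n}S_ny_n$, so by convexity of the norm and Jensen's inequality,
\begin{equation*}
    \mathbb E_\theta\norm{P_{B_n}\widehat P_mTy_\theta}^p\ge\norm{P_{B_n}S_ny_n}^p>(\varepsilon/2)^p .
\end{equation*}
Therefore $\norm{T}^p>N(\varepsilon/2)^p$, that is, $N^{1/p}\varepsilon<2\norm{T}$, contradicting the hypothesis.

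The main obstacle is getting the quantifiers right in the first step. We need the failure of the statement to produce a single outer coordinate $m$ that is bad simultaneously for all $n\le N$ and all infinite $A$. This works because each $n$ excludes only finitely many $m$, and there are only finitely many $n$. The rest is the routine averaging; one should check that using $P_{B_n}$, rather than a functional, is harmless, which it is by the contractivity and Jensen step above.
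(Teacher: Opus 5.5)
Your proof is correct and follows essentially the same route as the paper: contradiction, choosing a single outer coordinate $m$ outside the finitely many good ones for each $n\le N$, partitioning $\N$ into $B_1,\dots,B_N$, and sign-averaging over $y_\theta$ against the $\ell_p$-disjointness of the $B_n$. The only cosmetic difference is that you apply Jensen directly to the vector $\mathbb{E}_\theta[\theta_n P_{B_n}\widehat{P}_mTy_\theta]$ via convexity of the norm, whereas the paper first picks norming functionals $x_n^*\in B_{\ell_p(B_n,X)^*}$ and applies Jensen to scalars.
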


\begin{proof}
    We proceed by contradiction and suppose that the conclusion fails. For each $1\leq n\leq N$, set
    \begin{equation*}
        M(n)=\{m\in\N: \norm{P_A\widehat{P}_mTJ_n}<\varepsilon \text{ for some infinite } A\subseteq\N\}.
    \end{equation*}
    By assumption each $M(n)$ is finite so that we may choose $m\in\N\setminus\bigcup_{n=1}^N M(n)$. By the choice of $m$, for every $1\leq n\leq N$ and every infinite set $A\subseteq\N$ we have
    \begin{equation*}
        \norm{P_A\widehat{P}_mTJ_n}\geq\varepsilon.
    \end{equation*}

    Let $B_1,\ldots,B_N$ be a partition of $\N$ into infinite sets. For each $1\leq n\leq N$, choose $y_n\in B_Y$ and $x_n^*\in B_{\ell_p(B_n,X)^*}$ such that
    \begin{equation*}
        |x_n^*P_{B_n}\widehat{P}_mTJ_ny_n|>\varepsilon/2.
    \end{equation*}
    For each choice of signs $\theta=(\theta_1,\ldots,\theta_N)\in\{-1,1\}^N$, put
    \begin{equation*}
        y_\theta=\sum_{n=1}^N\theta_nJ_ny_n.
    \end{equation*}
    Since the domain is $\ell_\infty^N(Y)$, we have $\norm{y_\theta}\leq 1$. Therefore
    \begin{equation*}
        \norm{T}^p\geq \mathbb{E}_\theta[\norm{\widehat{P}_mTy_\theta}_{\ell_p(X)}^p].
    \end{equation*}
    Since the sets $B_1,\ldots,B_N$ are disjoint, we obtain
    \begin{equation*}
        \mathbb{E}_\theta[\norm{\widehat{P}_mTy_\theta}_{\ell_p(X)}^p]\geq \sum_{n=1}^N\mathbb{E}_\theta[\norm{P_{B_n}\widehat{P}_mTy_\theta}^p]\geq \sum_{n=1}^N\mathbb{E}_\theta[|x_n^*P_{B_n}\widehat{P}_mTy_\theta|^p].
    \end{equation*}
    By Jensen's inequality,
    \begin{equation*}
        \mathbb{E}_\theta[|x_n^*P_{B_n}\widehat{P}_mTy_\theta|^p]\geq \left|\mathbb{E}_\theta[\theta_nx_n^*P_{B_n}\widehat{P}_mTy_\theta]\right|^p.
    \end{equation*}
    Since $\mathbb{E}_\theta[\theta_n\theta_r]=0$ whenever $n\neq r$ and $\mathbb{E}_\theta[\theta_n^2]=1$, we get
    \begin{equation*}
        \left|\mathbb{E}_\theta[\theta_nx_n^*P_{B_n}\widehat{P}_mTy_\theta]\right|=|x_n^*P_{B_n}\widehat{P}_mTJ_ny_n|>\varepsilon/2.
    \end{equation*}
    Hence
    \begin{equation*}
        \norm{T}^p>N(\varepsilon/2)^p,
    \end{equation*}
    contradicting the hypothesis.
\end{proof}

Using this, we can give the desired upper triangular reduction.

\begin{proposition}[Reduction to upper-triangular operators from self-similarity]\label{prop:upper-triangular-reduction-from-reprod-ellinfty}
    Let $1 \leq p < \infty$, $E = c_0$ or $E = \ell_\infty$, $X$ be a Banach space, and $T\colon E(\ell_p(X)) \to E(\ell_p(X))$ be an operator. Then, for every $\varepsilon > 0$, there is an infinite set $M\subseteq\N$ and, for each $m\in M$, an infinite set $A_m\subseteq\N$ such that
    \begin{equation*}
        \sum_{\substack{n<m\\ n\in M}}\norm{P_{A_m}\widehat{P}_mT\widehat{J}_n}<\varepsilon.
    \end{equation*}
\end{proposition}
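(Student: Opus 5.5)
We follow the recursive scheme of \Cref{prop:upper-triangular-reduction-from-cotype-ellinfty}, with \Cref{lmm:forward-reduction-from-reprod-ellinfty} replacing \Cref{lmm:cotype-forward-reduction}. For each $j\in\N$ choose $N_j\in\N$ such that $N_j^{1/p}\varepsilon/2^{j+1}>2\norm{T}$. We recursively construct an increasing sequence $(n_j)_{j\in\N}$ and a decreasing sequence $(M_j)_{j\in\N_0}$ of infinite subsets of $\N$, with $M_0=\N$. At stage $j$ we also produce, for every $m\in M_j$, an infinite set $A^{(j)}_m\subseteq\N$. We require that $n_j\in M_{j-1}$, that $M_j\subseteq M_{j-1}\cap\{m:m>n_j\}$, and that $\norm{P_{A^{(j)}_m}\widehat{P}_mT\widehat{J}_{n_j}}<\varepsilon/2^{j+1}$ for all $m\in M_j$.

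For the inductive step, pick $a_1<\dots<a_{N_j}$ in $M_{j-1}$ and put $L_j=M_{j-1}\cap\{m>a_{N_j}\}$. Define $\widehat{J}_A\colon\ell_\infty^{N_j}(\ell_p(X))\to E(\ell_p(X))$ by placing the $s$-th entry in outer coordinate $a_s$. This map has norm $1$ because the outer norm is a supremum norm. Then apply \Cref{lmm:forward-reduction-from-reprod-ellinfty} with $Y=\ell_p(X)$ to $\widehat{P}_{L_j}T\widehat{J}_A\colon\ell_\infty^{N_j}(Y)\to E(L_j,\ell_p(X))\cong E(\ell_p(X))$. This operator has norm at most $\norm{T}$. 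The lemma yields an index $s_j$, an infinite set $M_j\subseteq L_j$, and infinite sets $A^{(j)}_m$ for $m\in M_j$. These satisfy $\norm{P_{A^{(j)}_m}\widehat{P}_mT\widehat{J}_{a_{s_j}}}<\varepsilon/2^{j+1}$. Set $n_j=a_{s_j}$.

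The main obstacle is that the inner sets depend on the column index $j$. For a fixed row $m=n_k$ we need a single infinite $A_m$ that works simultaneously for all $i<k$. Here we have only finitely many columns $n_1,\dots,n_{k-1}$, each giving its own infinite set $A^{(i)}_{n_k}$, and these need not be nested. The fix is to make them nested during the construction. At stage $j$, replace the range of the operator by an inner-restricted version before applying the lemma. Concretely, for the candidate rows $m\in L_j$ we already hold an infinite set $A^{(j-1)}_m$. We compose with $P_{A^{(j-1)}_m}$ coordinatewise, that is, with the operator $Q\colon E(\ell_p(X))\to E(\ell_p(X))$ given by $\widehat{P}_mQy=J_{A^{(j-1)}_m}P_{A^{(j-1)}_m}\widehat{P}_my$. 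This $Q$ is a norm-one diagonal operator. We then identify each $\ell_p(A^{(j-1)}_m,X)$ with $\ell_p(X)$ and apply the lemma to $Q\widehat{P}_{L_j}T\widehat{J}_A$. The resulting infinite sets can then be taken inside $A^{(j-1)}_m$, giving $A^{(j)}_m\subseteq A^{(j-1)}_m$. Since $P_{A'}=P_{A'}P_{A}$ for $A'\subseteq A$, the earlier estimates persist under shrinking: $\norm{P_{A'}\widehat{P}_mT\widehat{J}_{n_i}}\leq\norm{P_{A}\widehat{P}_mT\widehat{J}_{n_i}}$.

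Finally let $M=\{n_j:j\in\N\}$, and for $m=n_k$ set $A_m=A^{(k-1)}_{n_k}$. This set is defined because $n_k\in M_{k-1}$, and it is contained in $A^{(i)}_{n_k}$ for every $i<k$. Hence
\begin{equation*}
\sum_{\substack{n<m\\ n\in M}}\norm{P_{A_m}\widehat{P}_mT\widehat{J}_n}\leq\sum_{i<k}\norm{P_{A^{(i)}_{n_k}}\widehat{P}_{n_k}T\widehat{J}_{n_i}}<\sum_{i<k}\varepsilon/2^{i+1}<\varepsilon.
\end{equation*}
For $k=1$ the sum is empty, and $A_{n_1}$ may be taken to be $\N$.
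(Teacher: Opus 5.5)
Your proposal is correct and is essentially the paper's proof. The paper uses the same choice of $N_j$ and the same recursive application of \Cref{lmm:forward-reduction-from-reprod-ellinfty}. Its operator $R_j$ is exactly your $Q\widehat{P}_{L_j}T\widehat{J}_A$, which keeps the inner sets nested, $B_m^j\subseteq B_m^{j-1}$, and it makes the same final choice $A_{m_j}=B_{m_j}^{j-1}$.
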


\begin{proof}
    For each $j\in\N$, choose $N_j\in\N$ such that
    \begin{equation*}
        N_j^{1/p}\varepsilon/2^{j+1}>2\norm{T}.
    \end{equation*}
    Using \Cref{lmm:forward-reduction-from-reprod-ellinfty}, we recursively construct an increasing sequence $(m_j)_{j\in\N}$, a decreasing sequence $(M_j)_{j\in\N_0}$ of infinite subsets of $\N$, with $M_0=\N$, and, for each $j\in\N$ and each $m\in M_j$, an infinite set $B_m^j\subseteq\N$. The construction will ensure that $m_j\in M_{j-1}$, $M_j\subseteq M_{j-1}\cap\{m\in\N:m>m_j\}$, and, whenever $1\leq i\leq j$ and $m\in M_j$, we have
    \begin{equation*}
        \norm{P_{B_m^j}\widehat{P}_mT\widehat{J}_{m_i}}<\varepsilon/2^{i+1}.
    \end{equation*}
    Set $B_m^0=\N$ for every $m\in M_0$.

    Suppose that the construction has been carried out up to stage $j-1$. Choose distinct elements $a_1<\ldots<a_{N_j}$ of $M_{j-1}$, let $F=\{a_1,\ldots,a_{N_j}\}$ and define
    \begin{equation*}
        \widehat{J}_F\colon \ell_\infty^{N_j}(\ell_p(X))\to E(\ell_p(X))
    \end{equation*}
    by
    \begin{equation*}
        \widehat{J}_F(x_1,\ldots,x_{N_j})=\sum_{s=1}^{N_j}\widehat{J}_{a_s}x_s.
    \end{equation*}
    Put
    \begin{equation*}
        L_j=M_{j-1}\cap\{m\in\N:m>a_{N_j}\}.
    \end{equation*}
    After identifying each $\ell_p(B_m^{j-1},X)$ with $\ell_p(X)$, define $R_j\colon \ell_\infty^{N_j}(\ell_p(X))\to E(L_j,\ell_p(X))$ coordinate-wise by
    \begin{equation*}
        \widehat{P}_mR_jx=P_{B_m^{j-1}}\widehat{P}_mT\widehat{J}_Fx \qquad \text{for } m\in L_j.
    \end{equation*}
    Then $\norm{R_j}\leq\norm{T}$. Applying \Cref{lmm:forward-reduction-from-reprod-ellinfty} to $R_j$, after identifying $E(L_j,\ell_p(X))$ with $E(\ell_p(X))$ and translating the outer coordinates and inner coordinate sets back through the previous identifications, we obtain $1\leq s_j\leq N_j$, an infinite set $M_j\subseteq L_j$, and, for each $m\in M_j$, an infinite set $B_m^j\subseteq B_m^{j-1}$ such that
    \begin{equation*}
        \norm{P_{B_m^j}\widehat{P}_mT\widehat{J}_{a_{s_j}}}<\varepsilon/2^{j+1}.
    \end{equation*}
    Set $m_j=a_{s_j}$. Since $B_m^j\subseteq B_m^{j-1}$, all previous estimates are preserved. This completes the recursive construction.

    Let $M=\{m_j:j\in\N\}$ and for each $j\in\N$, define $A_{m_j}=B_{m_j}^{j-1}$.
    
    If $i<j$, then $m_j\in M_i$, and since $A_{m_j}\subseteq B_{m_j}^i$, the estimate constructed at stage $i$ gives
    \begin{equation*}
        \norm{P_{A_{m_j}}\widehat{P}_{m_j}T\widehat{J}_{m_i}}<\varepsilon/2^{i+1}.
    \end{equation*}
    Therefore, for every $j\in\N$,
    \begin{equation*}
        \sum_{\substack{n<m_j\\ n\in M}}\norm{P_{A_{m_j}}\widehat{P}_{m_j}T\widehat{J}_n}=\sum_{i<j}\norm{P_{A_{m_j}}\widehat{P}_{m_j}T\widehat{J}_{m_i}}<\sum_{i<j}\varepsilon/2^{i+1}<\varepsilon.
    \end{equation*}
    This proves the result.
\end{proof}
\section{Lower triangular reduction}\label{sec:lower-triangular}

In this section we prove the corresponding lower-triangular reductions. The general mechanism is again based on a one-step reduction: a backward reduction lemma allows us, after passing to suitable infinite subsets, to choose a coordinate whose image has arbitrarily small projection onto prescribed earlier coordinates. A standard recursive selection then turns these estimates into a lower-triangular form.

We begin with the case of $\ell_1(X)$. The finite cotype of the dual space case is obtained from \Cref{lmm:dual-cotype-backward-reduction-ell1} and \Cref{prop:lower-triangular-ell1-dual-cotype}, while the self-similar case is obtained from \Cref{lmm:backward-reduction-fromreprod-ell1} and \Cref{prop:lower-triangular-from-self-similarity-ell1}. We then treat the $c_0(X)$ and $\ell_\infty(X)$ cases: finite cotype is handled in \Cref{lmm:cotype-backward-reduction} and \Cref{prop:prop-lower-triangular-ellinfty-cotype}, and self-similarity in \Cref{lmm:backward-reduction-fromreprod-ellinfty} and \Cref{prop:lower-triangular-from-self-similarity-ellinfty}.

\subsection{The \texorpdfstring{$\ell_1$}{ell_1} case}

\subsubsection{Finite cotype of the dual space}

We begin by considering the operators on $\ell_1(X)$, again under the assumption that $X^*$ has finite cotype. The following lemma is the adjoint counterpart of the finite cotype estimate used in the proof of \Cref{lmm:cotype-forward-reduction}.

\begin{lemma}[Backward reduction from dual cotype]\label{lmm:dual-cotype-backward-reduction-ell1}
    Let $X$ and $Y$ be Banach spaces, suppose that $X^*$ has cotype $q < \infty$, and let $C_q(X^*)$ denote its cotype $q$ constant. Let $\varepsilon>0$ and $T\colon X\to \ell_1^N(Y)$ be an operator. Suppose that
    \begin{equation*}
        N^{1/q}\varepsilon>2C_q(X^*)\norm{T}.
    \end{equation*}
    Then there exists $1\leq n\leq N$ such that
    \begin{equation*}
        \norm{P_nT}<\varepsilon.
    \end{equation*}
\end{lemma}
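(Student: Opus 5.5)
We argue by contradiction, dualising the argument of \Cref{lmm:cotype-forward-reduction}. Suppose that $\norm{P_nT}\geq\varepsilon$ for every $1\leq n\leq N$. Since $P_nT\colon X\to Y$ and $\norm{(P_nT)^*}=\norm{P_nT}$, we may choose, for each $n$, a functional $y_n^*\in B_{Y^*}$ with
\begin{equation*}
    \norm{(P_nT)^*y_n^*}=\norm{T^*P_n^*y_n^*}>\varepsilon/2,
\end{equation*}
where $P_n^*\colon Y^*\to \ell_1^N(Y)^*=\ell_\infty^N(Y^*)$ is the canonical embedding into the $n$-th coordinate.

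For each choice of signs $\theta\in\{-1,1\}^N$ put $y_\theta^*=\sum_{n=1}^N\theta_nP_n^*y_n^*\in\ell_\infty^N(Y^*)$. Since the dual of $\ell_1^N(Y)$ is isometrically $\ell_\infty^N(Y^*)$, we get $\norm{y_\theta^*}\leq1$, and hence $\norm{T^*y_\theta^*}\leq\norm{T^*}=\norm{T}$ for every $\theta$. Averaging,
\begin{equation*}
    \norm{T}^q\geq\mathbb{E}_\theta\Bigl[\Bigl\|\sum_{n=1}^N\theta_nT^*P_n^*y_n^*\Bigr\|^q\Bigr]\geq C_q(X^*)^{-q}\sum_{n=1}^N\norm{T^*P_n^*y_n^*}^q>C_q(X^*)^{-q}N(\varepsilon/2)^q,
\end{equation*}
where the middle inequality is the cotype $q$ inequality in $X^*$ applied to the vectors $T^*P_n^*y_n^*\in X^*$. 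This gives $N^{1/q}\varepsilon<2C_q(X^*)\norm{T}$, contradicting the hypothesis.

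There is no real obstacle. The only point needing care is the identification of $\ell_1^N(Y)^*$ with $\ell_\infty^N(Y^*)$, which guarantees that a sum of sign-weighted functionals living on distinct coordinates has norm at most one. This is exactly the dual of the fact used in the forward lemma that $\norm{x_\theta}_{\ell_\infty^N(X)}\leq1$, and it is what allows cotype of $X^*$ (rather than of $X$) to enter.
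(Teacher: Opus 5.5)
Your proposal is correct and matches the paper's proof: you argue by contradiction, pick norming functionals $y_n^*$ for $(P_nT)^*$, form the sign combinations in $\ell_\infty^N(Y^*)=\ell_1^N(Y)^*$ (your $P_n^*$ is exactly the paper's $J_n$), and apply the cotype $q$ inequality in $X^*$ to the vectors $T^*P_n^*y_n^*$. The constants match, giving the same contradiction $N^{1/q}\varepsilon<2C_q(X^*)\norm{T}$.
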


\begin{proof}
    We proceed by contradiction and suppose that $\norm{P_nT}\geq\varepsilon$ for every $1\leq n\leq N$. For each $n$, choose $y_n^*\in B_{Y^*}$ such that
    \begin{equation*}
        \norm{(P_nT)^*y_n^*}>\varepsilon/2.
    \end{equation*}
    Identifying $(\ell_1^N(Y))^*$ with $\ell_\infty^N(Y^*)$, we have $(P_nT)^*y_n^*=T^*J_ny_n^*$, where $J_ny_n^*$ denotes the element of $\ell_\infty^N(Y^*)$ supported in the $n$-th coordinate.

    For each choice of signs $\theta=(\theta_1,\ldots,\theta_N)\in\{-1,1\}^N$, put
    \begin{equation*}
        y_\theta^*=\sum_{n=1}^N\theta_nJ_ny_n^*.
    \end{equation*}
    Then $\norm{y_\theta^*}_{\ell_\infty^N(Y^*)}\leq 1$, and therefore
    \begin{equation*}
        \norm{T}^q=\norm{T^*}^q\geq \mathbb{E}_\theta[\norm{T^*y_\theta^*}^q].
    \end{equation*}
    By the cotype $q$ estimate in $X^*$,
    \begin{equation*}
        \mathbb{E}_\theta[\norm{T^*y_\theta^*}^q]=\mathbb{E}_\theta\left[\left\|\sum_{n=1}^N\theta_nT^*J_ny_n^*\right\|^q\right]\geq C_q(X^*)^{-q}\sum_{n=1}^N\norm{T^*J_ny_n^*}^q.
    \end{equation*}
    Hence
    \begin{equation*}
        \norm{T}^q>C_q(X^*)^{-q}N(\varepsilon/2)^q,
    \end{equation*}
    and so
    \begin{equation*}
        N^{1/q}\varepsilon<2C_q(X^*)\norm{T},
    \end{equation*}
    contradicting the hypothesis.
\end{proof}

\begin{remark}
    The reason this reduction is effective for $\ell_1$-sums is that restrictions to coordinate sets are controlled by the individual coordinate restrictions. Indeed, if $S\colon \ell_1(X)\to Y$ is bounded and $M\subseteq\mathbb{N}$, then
    \begin{equation*}
        \norm{S J_M}=\sup_{m\in M}\norm{S J_m}.
    \end{equation*}
    Hence, coordinatewise smallness on $M$ immediately yields smallness of the restricted operator $S J_M$. One may prove analogous coordinatewise backward reductions for other $\ell_p$-sums, but such estimates no longer control $SJ_M$ itself, since the domain norm is not the $\ell_1$-norm.
\end{remark}

Using the previous result, we can give the lower-triangular reduction in the case of Banach spaces $X$ so that $X^*$ has finite cotype.

\begin{proposition}[Reduction to lower-triangular operators from dual cotype]\label{prop:lower-triangular-ell1-dual-cotype}
    Let $X$ be a Banach space such that $X^*$ has finite cotype. Then, for every operator $T\colon \ell_1(X)\to \ell_1(X)$ and every $\varepsilon>0$, there exists an infinite subset $M\subseteq\N$ such that
    \begin{equation*}
        \norm{P_MTJ_M-L}<\varepsilon,
    \end{equation*}
    where $L\colon \ell_1(M,X)\to\ell_1(M,X)$ is the lower triangular part of $P_MTJ_M$.
\end{proposition}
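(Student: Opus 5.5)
The plan is to build $M=\{n_1<n_2<\cdots\}$ recursively with \Cref{lmm:dual-cotype-backward-reduction-ell1}. At stage $j$ we pick the coordinate $n_j$ so that the earlier coordinates $n_1,\dots,n_{j-1}$ receive only a small part of $TJ_{n_j}$. Then the strictly upper triangular part $P_MTJ_M-L$ has small columns. By the remark following \Cref{lmm:dual-cotype-backward-reduction-ell1}, an operator out of $\ell_1(M,X)$ has norm equal to the supremum of the norms of its column restrictions $SJ_m$. So it suffices to arrange, for every $j$,
\begin{equation*}
    \sum_{i<j}\norm{P_{n_i}TJ_{n_j}}<\varepsilon/2 .
\end{equation*}
Then the strictly upper part is bounded with norm at most $\varepsilon/2<\varepsilon$, and $L=P_MTJ_M-(\text{strictly upper part})$ is automatically bounded.

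A naive choice of $n_j$ cannot control all earlier coordinates at once, so we pass to infinite sets as we go. We maintain a decreasing sequence of infinite sets $M_0=\N\supseteq M_1\supseteq\cdots$ with $n_j\in M_{j-1}$ and $M_j\subseteq M_{j-1}\cap\{m>n_j\}$. The invariant is that for every $m\in M_j$ and every $i\le j$ we have $\norm{P_{n_i}TJ_m}<\varepsilon/2^{i+2}$.

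To produce $M_j$ from $M_{j-1}$, take $n_j=\min M_{j-1}$, so that the invariant already holds for it with respect to $i<j$. Next fix $N$ with $N^{1/q}\varepsilon/2^{j+2}>2C_q(X^*)\norm{T}$ and partition $M_{j-1}\setminus\{n_j\}$ into infinite sets. Write $T'=P_{n_j}T$, a single coordinate. Since the target is one coordinate, the backward lemma must instead be applied with the \emph{source} $m$ and the $N$ candidate targets.

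The cleanest way I see is a Ramsey/pigeonhole step. Suppose that for all but finitely many $m\in M_{j-1}$ we had $\norm{P_{n_j}TJ_m}\ge\varepsilon/2^{j+2}$. Pick $N$ such $m$'s, say $m_1,\dots,m_N$, and choose $y^*\in B_{X^*}$ for each, then the dual vectors $(P_{n_j}TJ_{m_k})^*y_k^*$, sign-averaged. This fails, since they all live in $\ell_\infty(X^*)$ at one coordinate.

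So I instead use the dual form. The operator $T^*P_{n_j}^*$ maps $X^*$ into $\ell_\infty(X^*)$, and its compression to coordinates $m_1,\dots,m_N$ is an operator $X^*\to\ell_\infty^N(X^*)$. Its adjoint restricted to $\ell_1^N(X)$ is the map $(x_k)\mapsto\sum_k P_{n_j}TJ_{m_k}x_k$ into $X$, and the backward lemma does not directly fit this shape. Here the finite cotype of $X^*$ should be used through the weak compactness argument of \Cref{prop:upper-triangular-ell1-dual-cotype}. There every operator $\ell_1\to X$ built from this row has compact adjoint behaviour, and the row $(P_{n_j}TJ_m)_m$ is an operator $\ell_1(X)\to X$.

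Concretely, the step I would try first is the following. For a fixed $x\in X$, the map $m\mapsto P_{n_j}TJ_m$ gives the operator $\ell_1(X)\to X$ whose adjoint $X^*\to\ell_\infty(X^*)$ takes values whose coordinate norms must tend to zero along some infinite subset. This would follow from \Cref{lmm:dual-cotype-backward-reduction-ell1} applied to $(\widehat{\ }\,)$-compressions $T^{*}$ viewed as $X\to\ell_1^N$ after dualising via cotype of $X^*$. Passing to that infinite subset yields $M_j$.

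The main obstacle is exactly this single-row smallness: showing that the entries $\norm{P_{n_j}TJ_m}$ are small for infinitely many $m$. I expect it to come from applying \Cref{lmm:dual-cotype-backward-reduction-ell1} to the adjoint block $X\to\ell_1^N(X)$, obtained from $T$ restricted to the row with the roles of domain and codomain swapped via duality. Once this is established, the diagonal selection and the column-sup estimate finish the proof routinely.
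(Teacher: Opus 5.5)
There is a genuine gap, and it is exactly the step you flag as the main obstacle. Once you fix $n_j=\min M_{j-1}$ in advance, the claim that $\norm{P_{n_j}TJ_m}$ is small for infinitely many $m$ is false in general, so no duality or compactness argument can supply it. Take $X=\mathbb{K}$ (so $X^*$ has cotype $2$) and $Tx=\bigl(\sum_m x_m\bigr)e_1$ on $\ell_1$. Then $n_1=\min\N=1$ and $|T_{1,m}|=1$ for every $m$, so for $\varepsilon\le 8$ no infinite $M_1$ satisfies your invariant. The row $P_1T$ here is the summing functional; its adjoint sends $1$ to the constant sequence $(1,1,\ldots)\in\ell_\infty$, whose coordinates do not tend to zero along any subsequence. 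This row is even rank one, so the \emph{compact adjoint behaviour} you invoke gives nothing. Moreover, the compactness in \Cref{prop:upper-triangular-ell1-dual-cotype} concerns operators $X\to\ell_1$, which correspond to columns, not to rows $\ell_1(X)\to X$. A single row of an operator on $\ell_1(X)$ is not controlled at all; only columns are, via \Cref{lmm:dual-cotype-backward-reduction-ell1}.

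The missing idea is to choose the row only after looking at the columns. Your remark about \emph{the source $m$ and the $N$ candidate targets} points at this, but you then abandon it.
\begin{enumerate}
\item Pick $N$ candidates $a_1<\cdots<a_N$ in $M_{j-1}$, with $N$ as in your display.
\item For each $m\in M_{j-1}$ with $m>a_N$, apply \Cref{lmm:dual-cotype-backward-reduction-ell1} to the column block $X\to\ell_1^N(X)$, $x\mapsto(P_{a_s}TJ_mx)_{s=1}^N$. This block has norm at most $\norm{T}$, so the lemma gives $s(m)$ with $\norm{P_{a_{s(m)}}TJ_m}<\varepsilon/2^{j+2}$.
\item By pigeonhole, a single $s$ equals $s(m)$ for infinitely many $m$. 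Set $n_j=a_s$ and let $M_j$ be that infinite set of $m$.
\end{enumerate}
Since $n_j\in M_{j-1}$, the earlier part of your invariant still holds for it. The rest of your proposal then goes through as written: the invariant, and the column-supremum estimate bounding the strictly upper triangular part by $\sum_{i<j}\varepsilon/2^{i+2}<\varepsilon/4$. This corrected argument is precisely the paper's proof.
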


\begin{proof}
    Let $q<\infty$ be such that $X^*$ has cotype $q$, and let $C_q(X^*)$ denote its cotype $q$ constant. For each $k\in\N$, choose $N_k\in\N$ such that
    \begin{equation*}
        N_k^{1/q}\varepsilon/2^{k+1}>2C_q(X^*)\norm{T}.
    \end{equation*}

    Using \Cref{lmm:dual-cotype-backward-reduction-ell1}, we recursively construct an increasing sequence $(n_k)_{k\in\N}$ and infinite sets
    \begin{equation*}
        \N=A_0\supseteq A_1\supseteq A_2\supseteq\ldots
    \end{equation*}
    such that $n_k\in A_{k-1}$, $A_k\subseteq A_{k-1}\cap\{j\in\N:j>n_k\}$, and, whenever $1\leq i\leq k$ and $j\in A_k$, we have
    \begin{equation*}
        \norm{P_{n_i}TJ_j}<\varepsilon/2^{i+1}.
    \end{equation*}

    Suppose the construction has been carried out up to stage $k-1$. Choose distinct elements $a_1<\ldots<a_{N_k}$ of $A_{k-1}$, and put
    \begin{equation*}
        L_k=\{j\in A_{k-1}:j>a_{N_k}\}.
    \end{equation*}
    For each $j\in L_k$, define $T_j\colon X\to \ell_1^{N_k}(X)$ by
    \begin{equation*}
        T_jx=(P_{a_s}TJ_jx)_{s=1}^{N_k}.
    \end{equation*}
    Then $\norm{T_j}\leq\norm{T}$. Hence, by \Cref{lmm:dual-cotype-backward-reduction-ell1}, there exists $1\leq s(j)\leq N_k$ such that
    \begin{equation*}
        \norm{P_{a_{s(j)}}TJ_j}<\varepsilon/2^{k+1}.
    \end{equation*}
    By the pigeonhole principle, there are $1\leq s_k\leq N_k$ and an infinite set $A_k\subseteq L_k$ such that $s(j)=s_k$ for every $j\in A_k$. Define $n_k=a_{s_k}$. Since $A_k\subseteq A_{k-1}$, all previous estimates are preserved, and the new estimate follows from the choice of $A_k$. This completes the recursive construction.

    Set $M=\{n_k:k\in\N\}$ and let $S=P_MTJ_M$. Write $S=(S_{i,j})_{i,j\in\N}$ with respect to the enumeration $(n_k)_{k\in\N}$ of $M$, so that
    \begin{equation*}
        S_{i,j}=P_{n_i}TJ_{n_j}.
    \end{equation*}
    If $i<j$, then $n_j\in A_i$, and therefore
    \begin{equation*}
        \norm{S_{i,j}}<\varepsilon/2^{i+1}.
    \end{equation*}

    Define $H\colon \ell_1(M,X)\to\ell_1(M,X)$ to be the strictly upper triangular part of $S$. Then, for every $x\in\ell_1(M,X)$,
    \begin{equation*}
        \norm{Hx}\leq \sum_{j=2}^\infty\left(\sum_{i=1}^{j-1}\frac{\varepsilon}{2^{i+1}}\right)\norm{x_{n_j}}\leq \frac{\varepsilon}{2}\norm{x}.
    \end{equation*}
    Hence $H$ is bounded and $\norm{H}<\varepsilon$. Now set $L=S-H$, so that $L$ is the lower triangular part of $S=P_MTJ_M$, and
    \begin{equation*}
        \norm{P_MTJ_M-L}=\norm{H}<\varepsilon.
    \end{equation*}
\end{proof}

\subsubsection{Self-similarity of the space}

We obtain an analogous lower triangular reduction in the cases that the space has $\ell_p$ or $c_0$-self-similarity. First, we shall need the following preliminary result.

\begin{lemma}[Backward reduction from self-similarity]\label{lmm:backward-reduction-fromreprod-ell1}
    Let $1 < p < \infty$, $X$ and $Y$ be Banach spaces, $\varepsilon>0$, and $T\colon \ell_p(X) \to \ell_1^N(Y)$ be an operator. Suppose that
    \begin{equation*}
        N^{(p-1)/p}\varepsilon > 2 \norm{T}.
    \end{equation*}
    Then there exists $1\leq n\leq N$ and $M \subseteq \N$ infinite such that
    \begin{equation*}
        \norm{P_n T J_M} < \varepsilon.
    \end{equation*}
    The same result holds for operators from $c_0(X)$ and $\ell_\infty(X)$ provided $N \varepsilon > 2 \norm{T}$.
\end{lemma}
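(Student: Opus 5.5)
The plan is to argue by contradiction, following the averaging scheme of \Cref{lmm:forward-reduction-from-reprod-ell1}, but with the roles of domain and codomain arranged so that the $\ell_1^N$-structure of the target is the one that adds up. Suppose that for every $1\leq n\leq N$ and every infinite $M\subseteq\N$ we have $\norm{P_nTJ_M}\geq\varepsilon$. Partition $\N$ into infinite pairwise disjoint sets $M_1,\dots,M_N$. For each $n$, the assumption applied to the pair $(n,M_n)$ yields $x_n\in B_{\ell_p(M_n,X)}$ and $y_n^*\in B_{Y^*}$ with
\begin{equation*}
    |y_n^*P_nTJ_{M_n}x_n|>\varepsilon/2.
\end{equation*}
The point of using a different infinite set $M_n$ for each $n$ is that the resulting test vectors have disjoint supports, so their signed sums are controlled by the $\ell_p$ norm of the domain.

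For $\theta\in\{-1,1\}^N$ I will set $x_\theta=\sum_{n=1}^N\theta_nJ_{M_n}x_n$. Disjointness of supports gives $\norm{x_\theta}\leq N^{1/p}$, and hence $\mathbb{E}_\theta\norm{Tx_\theta}\leq\norm{T}N^{1/p}$. On the other side, the norm of $\ell_1^N(Y)$ is the sum of the coordinate norms, so
\begin{equation*}
    \mathbb{E}_\theta\norm{Tx_\theta}=\sum_{n=1}^N\mathbb{E}_\theta\norm{P_nTx_\theta}\geq\sum_{n=1}^N\bigl|\mathbb{E}_\theta[\theta_n y_n^*P_nTx_\theta]\bigr|.
\end{equation*}
By orthogonality of the Rademacher signs, the $n$-th term equals $|y_n^*P_nTJ_{M_n}x_n|>\varepsilon/2$. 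This gives $\norm{T}N^{1/p}>N\varepsilon/2$, that is, $N^{(p-1)/p}\varepsilon<2\norm{T}$, which contradicts the hypothesis.

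For domains $c_0(X)$ and $\ell_\infty(X)$ the argument is identical, except that now $\norm{x_\theta}\leq1$. The contradiction then reads $N\varepsilon<2\norm{T}$. In the $\ell_\infty$ case, $J_{M_n}$ is still the canonical embedding of $\ell_\infty(M_n,X)$, and $\norm{P_nTJ_{M_n}}$ is computed on that space, so nothing changes.

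There is no substantial obstacle. The only steps needing care are, first, making sure that the negated statement is applied to the specific pairs $(n,M_n)$ so that the supports are disjoint, and second, extracting a norming functional $y_n^*\in B_{Y^*}$ for the single coordinate $P_n$, so that the sign-orthogonality computation isolates exactly one term.
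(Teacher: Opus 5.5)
Your proposal is correct and follows the paper's proof essentially step for step. Both argue by contradiction using a partition $M_1,\dots,M_N$, norming pairs $x_n\in B_{\ell_p(M_n,X)}$, $y_n^*\in B_{Y^*}$, Rademacher averaging of $x_\theta$, additivity of the $\ell_1^N$-norm and sign orthogonality, with $\norm{x_\theta}\leq 1$ replacing $N^{1/p}$ in the $c_0$ and $\ell_\infty$ cases.
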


\begin{proof}
    As before, we proceed by contradiction and assume this is not the case. Let $M_1, \dots, M_N$ be a partition of $\N$ into infinite sets. By assumption $\norm{P_n T J_{M_n}} \geq \varepsilon$ for $1 \leq n \leq N$, so we can choose $x_n \in B_{\ell_p(M_n, X)}$ and $y_n^* \in B_{Y^*}$ such that
    \begin{equation*}
        |y_n^* P_n T J_{M_n} x_n| > \varepsilon/2.
    \end{equation*}
    For each choice of signs $\theta \in \{-1, 1\}^N$ let
    \begin{equation*}
        x_\theta = \sum_{n = 1}^N \theta_n J_{M_n} x_n.
    \end{equation*}
    Since $J_{M_1} x_1, \dots J_{M_N} x_N$ are disjointly supported vectors of norm at most one, we have that $\norm{x_\theta} \leq N^{1/p}$, so that
    \begin{equation*}
        \norm{T x_\theta} \leq \norm{T} N^{1/p} 
    \end{equation*}
    for any choice of signs $\theta$. On the other hand, averaging, we have
    \begin{equation*}
        \mathbb{E}_\theta [\norm{T x_\theta}] = \sum_{n=1}^N \mathbb{E}_\theta [\norm{P_n T x_\theta}] \geq \sum_{n=1}^N |\mathbb{E}_\theta [y^*_n \theta_n P_n T x_\theta]|.
    \end{equation*}
    Since $\mathbb{E}_\theta[\theta_n\theta_m]=0$ whenever $n\neq m$ and $\mathbb{E}_\theta[\theta_n^2]=1$, we get
    \begin{equation*}
        \left|\mathbb{E}_\theta [\theta_n y_n^* P_n T x_\theta] \right|=|y_n^* P_n T J_{M_n} x_n| > \varepsilon/2.
    \end{equation*}
    It follows that
    \begin{equation*}
        N \varepsilon/2 < \mathbb{E}_\theta [\norm{T x_\theta}] \leq \norm{T} N^{1/p},
    \end{equation*}
    in other words $N^{(p-1)/p} \varepsilon < 2 \norm{T}$, which gives the desired contradiction.

    The proof for $c_0$ and $\ell_\infty$ sums is identical, but now $\norm{x_\theta} \leq 1$, yielding
    \begin{equation*}
        N\varepsilon/2 < \mathbb{E}_\theta [\norm{T x_\theta}] \leq \norm{T},
    \end{equation*}
    which contradicts $N\varepsilon > 2\norm{T}$.
\end{proof}

We can now give a version of lower triangular reduction in the case of $\ell_p$ and $c_0$ self-similarity.

\begin{proposition}[Reduction to lower-triangular operators from self-similarity]\label{prop:lower-triangular-from-self-similarity-ell1}
    Let $E = \ell_p$ for $1 < p \leq \infty$ or $E = c_0$, $X$ be a Banach space and $T\colon \ell_1(E(X))\to \ell_1(E(X))$ be an operator. Then, for every $\varepsilon > 0$, there exist an increasing sequence $\mathbf{n}=(n_j)_{j\in\N}$ and infinite sets $(A_j)_{j \in \N}$ so that for all $i \in \N$ we have that whenever $j > i$ then
    \begin{equation*}
        \norm{\widehat{P}_{n_i} T \widehat{J}_{n_j} J_{A_j}} < \varepsilon/2^{i+1}.
    \end{equation*}
\end{proposition}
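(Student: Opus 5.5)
We mimic the proof of \Cref{prop:lower-triangular-ell1-dual-cotype}, replacing \Cref{lmm:dual-cotype-backward-reduction-ell1} by \Cref{lmm:backward-reduction-fromreprod-ell1}. The plan is to recursively construct $(n_k)$ and decreasing infinite sets of outer indices $\N=\widetilde A_0\supseteq\widetilde A_1\supseteq\cdots$, together with infinite inner sets $B_j^k$ attached to each $j\in\widetilde A_k$. These are chosen so that $n_k\in\widetilde A_{k-1}$, $\widetilde A_k\subseteq\widetilde A_{k-1}\cap\{j>n_k\}$, $B_j^k\subseteq B_j^{k-1}$ (with $B_j^0=\N$), and
\begin{equation*}
\norm{\widehat P_{n_i}T\widehat J_jJ_{B_j^k}}<\varepsilon/2^{i+1}\qquad (1\le i\le k,\ j\in \widetilde A_k).
\end{equation*}
At the end we put $A_j:=B_{n_j}^{j-1}$ (the sets indexed by $j$). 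For $i<j$ we have $n_j\in\widetilde A_i$ and $A_j\subseteq B_{n_j}^{i}$, so the estimate at stage $i$ restricts to the required one, since restricting the domain via $J_{A}$ with $A\subseteq B$ does not increase norms.

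For the inductive step, fix $N_k$ with $N_k^{(p-1)/p}\varepsilon/2^{k+1}>2\norm{T}$ (respectively $N_k\varepsilon/2^{k+1}>2\norm{T}$ when $E=\ell_\infty$ or $c_0$). Choose $a_1<\dots<a_{N_k}$ in $\widetilde A_{k-1}$ and let $L_k=\{j\in\widetilde A_{k-1}:j>a_{N_k}\}$. For each $j\in L_k$, define
\begin{equation*}
T_j\colon E(B_j^{k-1},X)\to\ell_1^{N_k}(E(X)),\qquad T_jx=(\widehat P_{a_s}T\widehat J_jJ_{B_j^{k-1}}x)_{s=1}^{N_k}.
\end{equation*}
Then $\norm{T_j}\le\norm{T}$, because the $\ell_1^{N_k}$ norm is dominated by the outer $\ell_1$ norm. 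Identify $E(B_j^{k-1},X)$ with $E(X)$, which is $\ell_p(X)$ or $c_0(X)$ or $\ell_\infty(X)$ with $Y=E(X)$. \Cref{lmm:backward-reduction-fromreprod-ell1} then gives $s(j)$ and an infinite $B_j^k\subseteq B_j^{k-1}$ with
\begin{equation*}
\norm{\widehat P_{a_{s(j)}}T\widehat J_jJ_{B_j^k}}<\varepsilon/2^{k+1}.
\end{equation*}
By pigeonhole there is some $s_k$ with $s(j)=s_k$ for infinitely many $j\in L_k$; let $\widetilde A_k$ be that set and set $n_k=a_{s_k}$. The earlier estimates survive because the $B$'s only shrink and $\widetilde A_k\subseteq\widetilde A_{k-1}$.

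The main subtlety is bookkeeping rather than analysis. The inner sets depend on the column $j$, and they must be chosen before we know which $j$ will become some $n_{k'}$. We therefore carry a whole family $B_j^k$ for all surviving $j$ and shrink each one at every stage, which is harmless since only countably many choices are made. We also need to check that the sets $A_j$ are infinite and that $n_k$ is increasing. Both are immediate: the $B$'s are infinite by the lemma, and $n_{k+1}\in\widetilde A_k\subseteq\{j>n_k\}$.
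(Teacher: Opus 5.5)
Your proposal is correct and follows the paper's proof essentially step for step. It uses the same choice of $N_k$, and the same column operators $T_j$ into $\ell_1^{N_k}(E(X))$ on the shrinking inner sets $B_j^{k-1}$, handled by \Cref{lmm:backward-reduction-fromreprod-ell1}. It then applies the same pigeonhole step to fix $s_k$, and makes the same final choice $A_j=B_{n_j}^{j-1}$.
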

\begin{proof}
    We first do the case $E=\ell_p$ for $1<p<\infty$. For each $k\in\N$, choose $N_k\in\N$ such that
    \begin{equation*}
        N_k^{(p-1)/p}\varepsilon/2^{k+1}>2\norm{T}.
    \end{equation*}

    Using \Cref{lmm:backward-reduction-fromreprod-ell1}, we recursively construct an increasing sequence $(n_k)_{k\in\N}$, a decreasing sequence $(M_k)_{k\in\N_0}$ of infinite subsets of $\N$ with $M_0=\N$, and, for each $k\in\N_0$ and each $m\in M_k$, an infinite set $B_m^k\subseteq\N$. The construction will ensure that $n_k\in M_{k-1}$, $M_k\subseteq M_{k-1}\cap\{m\in\N:m>n_k\}$, and, whenever $1\leq i\leq k$ and $m\in M_k$, we have
    \begin{equation*}
        \norm{\widehat{P}_{n_i}T\widehat{J}_mJ_{B_m^k}}<\varepsilon/2^{i+1}.
    \end{equation*}
    Set $B_m^0=\N$ for every $m\in M_0=\N$.

    Suppose the construction has been carried out up to stage $k-1$. Choose distinct elements $a_1<\ldots<a_{N_k}$ of $M_{k-1}$, and let $B=\{m\in M_{k-1}:m>a_{N_k}\}$. For each $m\in B$, define
    \begin{equation*}
        T_m\colon \ell_p(B_m^{k-1},X)\to \ell_1^{N_k}(\ell_p(X)), \qquad T_mx=(\widehat{P}_{a_s}T\widehat{J}_mJ_{B_m^{k-1}}x)_{s=1}^{N_k}.
    \end{equation*}
    Then $\norm{T_m}\leq\norm{T}$. Hence, by \Cref{lmm:backward-reduction-fromreprod-ell1}, there exist $1\leq s(m)\leq N_k$ and an infinite set $L_m\subseteq B_m^{k-1}$ such that
    \begin{equation*}
        \norm{\widehat{P}_{a_{s(m)}}T\widehat{J}_mJ_{L_m}}<\varepsilon/2^{k+1}.
    \end{equation*}
    By the pigeonhole principle, there exist $1\leq s_k\leq N_k$ and an infinite set $M_k\subseteq B$ such that $s(m)=s_k$ for every $m\in M_k$. Define $ n_k=a_{s_k}$. For $m\in M_k$, set $B_m^k=L_m$. Since $B_m^k\subseteq B_m^{k-1}$, all estimates obtained at previous stages are preserved, and the estimate for the new row follows from the choice of $M_k$. This completes the recursive construction.

    For each $j\in\N$, define
    \begin{equation*}
        A_j=B_{n_j}^{j-1}.
    \end{equation*}
    If $j>i$, then $n_j\in M_i$, and since $A_j\subseteq B_{n_j}^i$, the estimate constructed at stage $i$ gives
    \begin{equation*}
        \norm{\widehat{P}_{n_i}T\widehat{J}_{n_j}J_{A_j}}<\varepsilon/2^{i+1}.
    \end{equation*}

    The cases $E=\ell_\infty$ and $E=c_0$ are identical, except that at the beginning we choose $N_k\in\N$ such that
    \begin{equation*}
        N_k\varepsilon/2^{k+1}>2\norm{T},
    \end{equation*}
    and we apply the $c_0$ or $\ell_\infty$ part of \Cref{lmm:backward-reduction-fromreprod-ell1}.
\end{proof}

\subsection{The \texorpdfstring{$\ell_\infty$ and $c_0$}{ell-infinity and c0} cases}

\subsubsection{Finite cotype}

We now turn to the $c_0(X)$ and $\ell_\infty(X)$ cases under the assumption that $X$ has finite cotype. The backward reduction is simpler than in the $\ell_1$ case: finite cotype allows us to make the restriction of an operator to a suitable infinite set of coordinates arbitrarily small. This is the content of the next lemma.

\begin{lemma}[Backward reduction from cotype]\label{lmm:cotype-backward-reduction}
    Let $X$ have cotype $q<\infty$, $E$ be either $c_0$ or $\ell_\infty$, and $T\colon E(X)\to X$ be an operator. Then, for every $\varepsilon > 0$ there exists an infinite subset $M\subseteq \mathbb{N}$ such that $\norm{T J_M}<\varepsilon$.
\end{lemma}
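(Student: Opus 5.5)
Argue by contradiction, as in \Cref{lmm:cotype-forward-reduction}. Suppose there is $\varepsilon>0$ with $\norm{TJ_M}\geq\varepsilon$ for every infinite $M\subseteq\N$. Fix $N\in\N$ with $N^{1/q}\varepsilon>2C_q(X)\norm{T}$, and partition $\N$ into infinite, pairwise disjoint sets $M_1,\ldots,M_N$. By assumption $\norm{TJ_{M_n}}\geq\varepsilon$ for each $n$. So we may choose $x_n\in B_{E(M_n,X)}$ with
\begin{equation*}
    \norm{TJ_{M_n}x_n}>\varepsilon/2 \qquad (1\leq n\leq N).
\end{equation*}

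For signs $\theta\in\{-1,1\}^N$, put $x_\theta=\sum_{n=1}^N\theta_nJ_{M_n}x_n$. The key point is the supremum structure of $E=c_0$ or $\ell_\infty$. The vectors $J_{M_n}x_n$ are disjointly supported with norm at most one, so $\norm{x_\theta}_{E(X)}=\max_n\norm{x_n}\leq 1$ for every $\theta$. Therefore
\begin{equation*}
    \norm{T}^q\geq\mathbb{E}_\theta\bigl[\norm{Tx_\theta}^q\bigr]=\mathbb{E}_\theta\Bigl[\Bigl\|\sum_{n=1}^N\theta_nTJ_{M_n}x_n\Bigr\|^q\Bigr].
\end{equation*}
Since $T$ takes values in $X$, the cotype $q$ inequality in $X$ bounds the right-hand side below by
\begin{equation*}
    C_q(X)^{-q}\sum_{n=1}^N\norm{TJ_{M_n}x_n}^q>C_q(X)^{-q}N(\varepsilon/2)^q.
\end{equation*}
This gives $N^{1/q}\varepsilon<2C_q(X)\norm{T}$, contradicting the choice of $N$. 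Hence some infinite $M$ has $\norm{TJ_M}<\varepsilon$. In fact the argument shows that one of $M_1,\ldots,M_N$ works.

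There is no real obstacle. The only care needed is the choice of $x_n$ in the $\ell_\infty$ case, where $E(M_n,X)$ is not the closure of finitely supported vectors. Since $x_n$ is simply any norming vector for $TJ_{M_n}$, this causes no problem: no matrix representation of $T$ is used, only linearity and boundedness. The contrast with the earlier lemmas is that here the domain norm is a supremum norm, so summing $N$ disjoint blocks costs nothing. Cotype in the target $X$ then forces the $N$ images to add up to roughly $N^{1/q}\varepsilon$, which is impossible for large $N$.
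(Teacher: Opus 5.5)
Your proof is correct and follows the paper's argument essentially verbatim: contradiction, $N$ pairwise disjoint infinite sets $M_n$, norming vectors $x_n\in B_{E(M_n,X)}$, sign-averaged vectors $x_\theta$ of norm at most one thanks to the supremum norm of $E$, and the cotype $q$ inequality in $X$. Your closing observation that one of $M_1,\ldots,M_N$ must already satisfy $\norm{TJ_{M_n}}<\varepsilon$ is a valid byproduct of the same argument.
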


\begin{proof}
    We proceed by contradiction and suppose this is not the case. Then there exists $\varepsilon>0$ so that
    \begin{equation*}
        \norm{T J_M}\geq\varepsilon
    \end{equation*}
    for every infinite subset $M\subseteq \mathbb{N}$. Choose $N\in\N$ such that
    \begin{equation*}
        N^{1/q}\varepsilon>2C_q(X)\norm{T}.
    \end{equation*}
    Let $M_1,\ldots,M_N$ be pairwise disjoint infinite subsets of $\mathbb{N}$. By the assumption, for each $1\leq n\leq N$ we may choose $x_n\in B_{E(M_n,X)}$ such that
    \begin{equation*}
        \norm{TJ_{M_n}x_n}>\varepsilon/2.
    \end{equation*}
    For each choice of signs $\theta=(\theta_1,\ldots,\theta_N)\in\{-1,1\}^N$, put
    \begin{equation*}
        x_\theta=\sum_{n=1}^N\theta_nJ_{M_n}x_n.
    \end{equation*}
    Since the sets $M_1,\ldots,M_N$ are pairwise disjoint, we have $\norm{x_\theta}_{E(X)}\leq 1$. Therefore
    \begin{equation*}
        \norm{T}^q\geq \mathbb{E}_\theta [\norm{Tx_\theta}^q]=\mathbb{E}_\theta\left[\left\|\sum_{n=1}^N\theta_nTJ_{M_n}x_n\right\|^q\right].
    \end{equation*}
    By cotype $q$ of $X$,
    \begin{equation*}
        \mathbb{E}_\theta\left[\left\|\sum_{n=1}^N\theta_nTJ_{M_n}x_n\right\|^q\right]\geq C_q(X)^{-q}\sum_{n=1}^N\norm{TJ_{M_n}x_n}^q>C_q(X)^{-q}N(\varepsilon/2)^q.
    \end{equation*}
    Hence
    \begin{equation*}
        N^{1/q}\varepsilon<2C_q(X)\norm{T},
    \end{equation*}
    contradicting the choice of $N$.
\end{proof}

The lower-triangular reduction will now follow from the same recursive selection argument.

\begin{proposition}[Reduction to lower-triangular operators from cotype]\label{prop:prop-lower-triangular-ellinfty-cotype}
    Let $X$ be a Banach space with finite cotype $q$, $E = c_0$ or $E = \ell_\infty$ and $T: E(X) \to E(X)$ be an operator. Then for every $\varepsilon > 0$, there exists an infinite subset $M \subseteq \N$ such that
    \begin{equation*}
        \norm{P_M T J_M - L} < \varepsilon,
    \end{equation*}
    where $L \colon E(M, X) \to E(M, X)$ is the lower triangular part of $P_M T J_M$.
\end{proposition}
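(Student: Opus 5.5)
We construct recursively an increasing sequence $(n_k)_{k\in\N}$ and a decreasing sequence of infinite sets $\N=A_0\supseteq A_1\supseteq\ldots$ with $n_k\in A_{k-1}$ and $A_k\subseteq A_{k-1}\cap\{j:j>n_k\}$. The goal is that, for every $k$,
\begin{equation*}
    \norm{P_{n_k}TJ_{A_k}}<\varepsilon/2^{k+1}.
\end{equation*}
At stage $k$ we pick any $n_k\in A_{k-1}$. We then apply \Cref{lmm:cotype-backward-reduction} to the operator
\begin{equation*}
    P_{n_k}TJ_{L_k}\colon E(L_k,X)\to X, \qquad L_k=\{j\in A_{k-1}:j>n_k\}.
\end{equation*}
Here we identify $E(L_k,X)$ with $E(X)$; the norm is at most $\norm{T}$. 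The lemma gives an infinite $A_k\subseteq L_k$ with the required estimate. Unlike the $\ell_1$ case, no pigeonholing over a block of $N_k$ candidates is needed, since the lemma already controls a whole restriction $TJ_M$ rather than individual coordinates.

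Set $M=\{n_k:k\in\N\}$ and $S=P_MTJ_M$, with entries $S_{i,j}=P_{n_i}TJ_{n_j}$. For $j>i$ we have $n_j\in A_i$. The key point is that the estimate is on the full row restriction: for $x\in E(M,X)$,
\begin{equation*}
    \Bigl\|\sum_{j>i}S_{i,j}x_{n_j}\Bigr\| = \norm{P_{n_i}TJ_{A_i\cap M}P_{\{n_j:j>i\}}x}\leq \frac{\varepsilon}{2^{i+1}}\norm{x}.
\end{equation*}
This uses that $E$ is $c_0$ or $\ell_\infty$, so $\norm{P_{\{n_j:j>i\}}x}\le\norm{x}$. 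For $E=\ell_\infty$ we also need that $P_{n_i}TJ_{\{n_j:j>i\}}$ applied to $x$ coincides with this restricted operator, which holds since $J_{A_i}$ is a genuine embedding and $\{n_j:j>i\}\subseteq A_i$.

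Next we define $H$ on $E(M,X)$ coordinatewise by $P_{n_i}Hx=P_{n_i}TJ_{\{n_j:j>i\}}P_{\{n_j:j>i\}}x$. Then $\sup_i\norm{P_{n_i}Hx}\leq \frac{\varepsilon}{2}\norm{x}$. For $E=c_0$ the coordinates even tend to zero, because $\norm{P_{n_i}Hx}\le \varepsilon 2^{-i-1}\norm{x}$, so $H$ maps into $c_0(M,X)$. Thus $H$ is bounded with $\norm{H}<\varepsilon$. We put $L=S-H$, so that $\norm{P_MTJ_M-L}=\norm{H}<\varepsilon$.

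It remains to identify $L$ with the lower triangular part of $S$. On $c_{00}$ this is immediate from the matrix entries. For $E=c_0$, density then gives the equality. For $E=\ell_\infty$, the lower triangular part is defined by $P_{n_i}Lx=\sum_{j\le i}S_{i,j}x_{n_j}=P_{n_i}TJ_{\{n_1,\dots,n_i\}}P_{\{n_1,\dots,n_i\}}x$. Since $P_{n_i}Sx=P_{n_i}TJ_{\{n_1,\ldots,n_i\}}P_{\{n_1,\ldots,n_i\}}x+P_{n_i}TJ_{\{n_j:j>i\}}P_{\{n_j:j>i\}}x$ by linearity (a finite plus a cofinite splitting, with no series involved), we get $S-H$ equal to this $L$.

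The main obstacle is this $\ell_\infty$ bookkeeping: $T$ need not act according to its matrix. The remedy is to define $H$ via the row operators $P_{n_i}TJ_{\{n_j:j>i\}}P_{\{n_j:j>i\}}$ rather than via matrix series, and to rely on the finite/cofinite decomposition above, which is valid for any bounded $T$.
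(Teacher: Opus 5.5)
Your proposal is correct and follows the paper's proof essentially verbatim. It uses the same recursion via \Cref{lmm:cotype-backward-reduction} with no pigeonholing, the same coordinatewise definition $P_{n_k}Hx=P_{n_k}TJ_{\{n_j:j>k\}}P_{\{n_j:j>k\}}x$, and the same finite/cofinite splitting to identify $L=S-H$ with the lower triangular part acting according to its matrix; the one blemish is your displayed identity with the series $\sum_{j>i}S_{i,j}x_{n_j}$, which need not converge to the row operator when $E=\ell_\infty$, but nothing depends on it because you define $H$ through the row operators.
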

\begin{proof}
    Using \Cref{lmm:cotype-backward-reduction}, we recursively construct an increasing sequence $(n_k)_{k\in\N}$ and infinite sets $(A_k)_{k\in\N_0}$, with $A_0=\N$, such that $n_k\in A_{k-1}$, $A_k\subseteq A_{k-1}\cap\{n\in\N:n>n_k\}$, and
    \begin{equation*}
        \norm{P_{n_k}TJ_{A_k}}<\varepsilon/2^{k+1}.
    \end{equation*}

    We now do the recursive construction. Assume that $A_{k-1}$ has been constructed. Choose $n_k\in A_{k-1}$, and put
    \begin{equation*}
        B_k=A_{k-1}\cap\{n\in\N:n>n_k\}.
    \end{equation*} 
    Applying \Cref{lmm:cotype-backward-reduction} to the operator
    \begin{equation*}
        P_{n_k}TJ_{B_k}\colon E(B_k,X)\to X,
    \end{equation*}
    after identifying $E(B_k,X)$ with $E(X)$, we obtain an infinite set $A_k\subseteq B_k$ such that
    \begin{equation*}
        \norm{P_{n_k}TJ_{A_k}}<\varepsilon/2^{k+1}.
    \end{equation*}
    This completes the recursive construction.

    Set $M=\{n_k:k\in\N\}$ and, for each $k\in\N$, put $M_k=\{n_j:j>k\}$. Since $M_k\subseteq A_k$, we have
    \begin{equation*}
        \norm{P_{n_k}TJ_{M_k}}<\varepsilon/2^{k+1}
    \end{equation*}
    for every $k\in\N$.

    Let $S=P_MTJ_M$. For $x\in E(M,X)$, define $Hx$ coordinatewise by
    \begin{equation*}
        P_{n_k}Hx=P_{n_k}TJ_{M_k}P_{M_k}x
    \end{equation*}
    for every $k\in\N$. Then, for every $x\in E(M,X)$ and every $k\in\N$, we have
    \begin{equation*}
        \norm{P_{n_k}Hx}\leq \frac{\varepsilon}{2^{k+1}}\norm{x}.
    \end{equation*}
    Hence $H$ is a bounded operator on $\ell_\infty(M,X)$ with $\norm{H}<\varepsilon$. In the case $E=c_0$, the same estimate also shows that $Hx\in c_0(M,X)$ whenever $x\in c_0(M,X)$. Define $L=S-H$. Then
    \begin{equation*}
        \norm{P_MTJ_M-L}=\norm{H}<\varepsilon.
    \end{equation*}
    Finally, for every $x\in E(M,X)$ and every $k\in\N$, we have
    \begin{equation*}
        P_{n_k}Lx=P_{n_k}TJ_{\{n_1,\ldots,n_k\}}P_{\{n_1,\ldots,n_k\}}x=\sum_{j=1}^kP_{n_k}TJ_{n_j}x_{n_j}.
    \end{equation*}
    Thus $L$ has the lower triangular matrix representation of $P_MTJ_M$ and acts according to this matrix. Hence, also in the case $E=\ell_\infty$, $L$ is the lower triangular part of $P_MTJ_M$.
\end{proof}

\subsubsection{Self-similarity of the space}

It remains to treat the self-similar case for $c_0$ and $\ell_\infty$ sums. Here the backward reduction follows from the incompatibility between the sup norm in the domain and the $\ell_p$-structure in the target; the lower-triangular reduction then follows by the usual recursive selection.

\begin{lemma}[Backward reduction from self-similarity]\label{lmm:backward-reduction-fromreprod-ellinfty}
    Let $X$ and $Y$ be Banach spaces, $E = c_0$ or $E = \ell_\infty$ and $T\colon E(Y) \to \ell_p(X)$ be an operator where $1 \leq p < \infty$. Then for any $\varepsilon > 0$, there exist infinite sets $A, M \subseteq \N$ such that $\norm{P_A T J_M} < \varepsilon$.
\end{lemma}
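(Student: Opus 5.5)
We argue by contradiction, using the same Rademacher averaging scheme as in \Cref{lmm:forward-reduction-from-reprod-ellinfty}, but now taking disjointness in \emph{both} the domain (outer sup norm) and the target (inner $\ell_p$ norm). Suppose the conclusion fails. Then there is $\varepsilon>0$ such that $\norm{P_ATJ_M}\geq\varepsilon$ for every pair of infinite sets $A,M\subseteq\N$. Fix $N\in\N$ with
\begin{equation*}
    N^{1/p}\varepsilon>2\norm{T}.
\end{equation*}
Partition $\N$ into infinite pairwise disjoint sets $M_1,\ldots,M_N$, and also into infinite pairwise disjoint sets $A_1,\ldots,A_N$. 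By assumption $\norm{P_{A_n}TJ_{M_n}}\geq\varepsilon$ for each $n$. We therefore choose $x_n\in B_{E(M_n,Y)}$ and $x_n^*\in B_{\ell_p(A_n,X)^*}$ with
\begin{equation*}
    |x_n^*P_{A_n}TJ_{M_n}x_n|>\varepsilon/2.
\end{equation*}

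For $\theta\in\{-1,1\}^N$ we set $x_\theta=\sum_{n=1}^N\theta_nJ_{M_n}x_n$. Because the $M_n$ are disjoint and the domain norm is a supremum, $\norm{x_\theta}\leq1$, so $\norm{Tx_\theta}\leq\norm{T}$ for every $\theta$. On the target side, the disjointness of the $A_n$ in $\ell_p(X)$ gives
\begin{equation*}
    \norm{T}^p\geq\mathbb{E}_\theta\norm{Tx_\theta}^p\geq\sum_{n=1}^N\mathbb{E}_\theta\norm{P_{A_n}Tx_\theta}^p\geq\sum_{n=1}^N\mathbb{E}_\theta|x_n^*P_{A_n}Tx_\theta|^p.
\end{equation*}
Jensen's inequality, applied since $p\geq1$, bounds each summand below by $|\mathbb{E}_\theta[\theta_nx_n^*P_{A_n}Tx_\theta]|^p$. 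Orthogonality of the Rademacher signs then reduces this to $|x_n^*P_{A_n}TJ_{M_n}x_n|^p>(\varepsilon/2)^p$. Summing, we get $\norm{T}^p>N(\varepsilon/2)^p$, that is, $N^{1/p}\varepsilon<2\norm{T}$, which contradicts the choice of $N$.

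The main point needing care is the first inequality in the display, $\norm{y}^p\geq\sum_n\norm{P_{A_n}y}^p$ for $y\in\ell_p(X)$. Since the $A_n$ partition $\N$, this is in fact an equality, so there is no real obstacle. There is also a subtlety when $E=\ell_\infty$: an operator need not act according to its matrix. This does not matter here, because we only apply $T$ to the explicit vectors $x_\theta$, which lie in $E(X)$, and we use linearity, $Tx_\theta=\sum_n\theta_nTJ_{M_n}x_n$. This is a finite sum, so no matrix representation is needed. The case $p=1$ is included, since then Jensen is trivial and the bound reads $N\varepsilon/2<\norm{T}$.
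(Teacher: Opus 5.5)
Your proof is correct and follows the paper's argument essentially verbatim: partitions $A_1,\ldots,A_N$ and $M_1,\ldots,M_N$, Rademacher averaging with $\norm{x_\theta}\leq 1$ from the supremum norm, splitting the $\ell_p$-norm over the $A_n$, then Jensen and orthogonality to reach $\norm{T}^p>N(\varepsilon/2)^p$. One minor slip: the vectors $x_\theta$ lie in $E(Y)$, not $E(X)$.
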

\begin{proof}
    We proceed by contradiction and suppose that the conclusion fails. Then there is $\varepsilon>0$ such that
    \begin{equation*}
        \norm{P_ATJ_M}\geq\varepsilon
    \end{equation*}
    for every pair of infinite sets $A,M\subseteq\N$. Choose $N\in\N$ such that
    \begin{equation*}
        N^{1/p}\varepsilon>2\norm{T}.
    \end{equation*}
    Let $A_1,\ldots,A_N$ and $M_1,\ldots,M_N$ be partitions of $\N$ into infinite sets. By assumption, for each $1\leq n\leq N$ we have
    \begin{equation*}
        \norm{P_{A_n}TJ_{M_n}}\geq\varepsilon.
    \end{equation*}
    Hence we may choose $y_n\in B_{E(M_n,Y)}$ and $x_n^*\in B_{\ell_p(A_n,X)^*}$ such that
    \begin{equation*}
        |x_n^*P_{A_n}TJ_{M_n}y_n|>\varepsilon/2.
    \end{equation*}

    For each choice of signs $\theta=(\theta_1,\ldots,\theta_N)\in\{-1,1\}^N$, put
    \begin{equation*}
        y_\theta=\sum_{n=1}^N\theta_nJ_{M_n}y_n.
    \end{equation*}
    Since $E=c_0$ or $E=\ell_\infty$ and the vectors $J_{M_n}y_n$ have pairwise disjoint supports, we have $\norm{y_\theta}\leq 1$. Therefore
    \begin{equation*}
        \norm{T}^p\geq \mathbb{E}_\theta[\norm{Ty_\theta}^p].
    \end{equation*}
    Since the sets $A_1,\ldots,A_N$ are pairwise disjoint, we obtain
    \begin{equation*}
        \mathbb{E}_\theta[\norm{Ty_\theta}^p]\geq \sum_{n=1}^N\mathbb{E}_\theta[\norm{P_{A_n}Ty_\theta}^p]\geq \sum_{n=1}^N\mathbb{E}_\theta[|x_n^*P_{A_n}Ty_\theta|^p].
    \end{equation*}
    By Jensen's inequality,
    \begin{equation*}
        \mathbb{E}_\theta[|x_n^*P_{A_n}Ty_\theta|^p]\geq \left|\mathbb{E}_\theta[\theta_nx_n^*P_{A_n}Ty_\theta]\right|^p.
    \end{equation*}
    Since $\mathbb{E}_\theta[\theta_n\theta_m]=0$ whenever $n\neq m$ and $\mathbb{E}_\theta[\theta_n^2]=1$, we have
    \begin{equation*}
        \left|\mathbb{E}_\theta[\theta_nx_n^*P_{A_n}Ty_\theta]\right|=|x_n^*P_{A_n}TJ_{M_n}y_n|>\varepsilon/2.
    \end{equation*}
    It follows that
    \begin{equation*}
        \norm{T}^p\geq \mathbb{E}_\theta[\norm{Ty_\theta}^p]>N(\varepsilon/2)^p,
    \end{equation*}
    contradicting the choice of $N$. This contradiction proves the result.
\end{proof}

Lastly, we use the previous lemma to give the desired version of lower-triangular reduction.

\begin{proposition}[Reduction to lower-triangular operators from self-similarity]\label{prop:lower-triangular-from-self-similarity-ellinfty}
    Let $1\leq p<\infty$, $E=c_0$ or $E=\ell_\infty$, $X$ be a Banach space, and $T\colon E(\ell_p(X)) \to E(\ell_p(X))$ be an operator. Then, for every $\varepsilon>0$, there exists an infinite set $M\subseteq\N$ and, for each $m\in M$, an infinite set $A_m\subseteq\N$ such that, for every $m\in M$,
    \begin{equation*}
        \norm{P_{A_m}\widehat{P}_mT\widehat{J}_{M_m}}<\varepsilon,
    \end{equation*}
    where $M_m=\{n\in M:n>m\}$.
\end{proposition}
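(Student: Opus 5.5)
The plan is the recursive selection used in \Cref{prop:lower-triangular-ell1-dual-cotype} and \Cref{prop:prop-lower-triangular-ellinfty-cotype}, with \Cref{lmm:backward-reduction-fromreprod-ellinfty} as the one-step input. We construct recursively an increasing sequence $(n_k)_{k\in\N}$, infinite sets $\N=B_0\supseteq B_1\supseteq B_2\supseteq\ldots$ with $n_k\in B_{k-1}$ and $B_k\subseteq B_{k-1}\cap\{n\in\N:n>n_k\}$, and, for each $k$, an infinite set $A_{n_k}\subseteq\N$ satisfying
\begin{equation*}
    \norm{P_{A_{n_k}}\widehat{P}_{n_k}T\widehat{J}_{B_k}}<\varepsilon.
\end{equation*}
Here $\widehat{J}_{B_k}\colon E(B_k,\ell_p(X))\to E(\ell_p(X))$ is the outer inclusion.

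Suppose $B_{k-1}$ has been constructed. Pick any $n_k\in B_{k-1}$ and set $C_k=B_{k-1}\cap\{n:n>n_k\}$. Then apply \Cref{lmm:backward-reduction-fromreprod-ellinfty} to
\begin{equation*}
    \widehat{P}_{n_k}T\widehat{J}_{C_k}\colon E(C_k,\ell_p(X))\to \ell_p(X).
\end{equation*}
To do so, identify $E(C_k,\ell_p(X))$ with $E(Y)$, where $Y=\ell_p(X)$, and keep the target $\ell_p(X)$ with its inner coordinates. The lemma produces infinite sets $A\subseteq\N$ and $B\subseteq C_k$ with $\norm{P_A\widehat{P}_{n_k}T\widehat{J}_{B}}<\varepsilon$. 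We set $A_{n_k}=A$ and $B_k=B$. No pigeonholing is needed here, since the row index $n_k$ is fixed before the lemma is applied. The same $\varepsilon$ works at every stage, because the statement asks only for a uniform bound $\varepsilon$ rather than a summable sequence. This is unlike the $\ell_1$ case, where the summands $\varepsilon/2^{i+1}$ were essential.

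Finally put $M=\{n_k:k\in\N\}$. For $m=n_k$ we have $M_m=\{n_j:j>k\}\subseteq B_k$, since the $B_j$ decrease and $n_j\in B_{j-1}\subseteq B_k$ for $j>k$. Hence
\begin{equation*}
    \widehat{J}_{M_m}=\widehat{J}_{B_k}\widehat{J}_{M_m}^{B_k},
\end{equation*}
where $\widehat{J}_{M_m}^{B_k}$ is the contractive inclusion of $E(M_m,\ell_p(X))$ into $E(B_k,\ell_p(X))$. It follows that $\norm{P_{A_m}\widehat{P}_mT\widehat{J}_{M_m}}\le\norm{P_{A_m}\widehat{P}_mT\widehat{J}_{B_k}}<\varepsilon$.

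The only point needing care is the identification when invoking the lemma: relabelling the outer index set $C_k$ as $\N$ preserves infinitude of subsets and the norms of restrictions. In the case $E=\ell_\infty$ the operator need not act according to its matrix, but this is harmless because the whole argument uses restrictions $\widehat{J}_B$ of the operator itself, not matrix entries. I therefore expect no real obstacle; the argument is a direct recursion on \Cref{lmm:backward-reduction-fromreprod-ellinfty}.
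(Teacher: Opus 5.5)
Your proposal is correct and follows the paper's proof essentially verbatim: the same recursion (choose $n_k\in B_{k-1}$, apply \Cref{lmm:backward-reduction-fromreprod-ellinfty} to $\widehat{P}_{n_k}T\widehat{J}_{C_k}$ to obtain $A_{n_k}$ and $B_k\subseteq C_k$), a fixed $\varepsilon$ at every stage, and the final inclusion $M_m\subseteq B_k$. Your remarks that no pigeonholing is needed and that the $\ell_\infty$ matrix issue is harmless are also accurate.
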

\begin{proof}
    Using \Cref{lmm:backward-reduction-fromreprod-ellinfty}, we recursively construct an increasing sequence $(m_k)_{k\in\N}$, a decreasing sequence $(B_k)_{k\in\N_0}$ of infinite subsets of $\N$, with $B_0=\N$, and infinite sets $(A_{m_k})_{k\in\N}$ such that $m_k\in B_{k-1}$, $B_k\subseteq B_{k-1}\cap\{m\in\N:m>m_k\}$, and
    \begin{equation*}
        \norm{P_{A_{m_k}}\widehat{P}_{m_k}T\widehat{J}_{B_k}}<\varepsilon.
    \end{equation*}

    Assume that $B_{k-1}$ has been constructed. Choose $m_k\in B_{k-1}$, and put
    \begin{equation*}
        L_k=B_{k-1}\cap\{m\in\N:m>m_k\}.
    \end{equation*}
    Then $L_k$ is infinite. Applying \Cref{lmm:backward-reduction-fromreprod-ellinfty} to the operator
    \begin{equation*}
        \widehat{P}_{m_k}T\widehat{J}_{L_k}\colon E(L_k,\ell_p(X))\to \ell_p(X),
    \end{equation*}
    after identifying $E(L_k,\ell_p(X))$ with $E(\ell_p(X))$, we obtain an infinite set $A_{m_k}\subseteq\N$ and, after translating back through the identification, an infinite set $B_k\subseteq L_k$ such that
    \begin{equation*}
        \norm{P_{A_{m_k}}\widehat{P}_{m_k}T\widehat{J}_{B_k}}<\varepsilon.
    \end{equation*}
    This completes the recursive construction.

    Set $M=\{m_k:k\in\N\}$. Let $m\in M$, say $m=m_k$. Since
    \begin{equation*}
        M_m=\{n\in M:n>m\}=\{m_j:j>k\}\subseteq B_k,
    \end{equation*}
    we have
    \begin{equation*}
        \norm{P_{A_m}\widehat{P}_mT\widehat{J}_{M_m}}\leq \norm{P_{A_{m_k}}\widehat{P}_{m_k}T\widehat{J}_{B_k}}<\varepsilon.
    \end{equation*}
    This proves the result.
\end{proof}
\bigskip
\section{Diagonal reductions and proof of the main theorems}\label{sec:diagonal-and-proofs}

In this section we combine the upper- and lower-triangular reductions from the previous sections. Applying both reduction allow us to pass, up to an arbitrarily small perturbation, to the diagonal part. These diagonal reductions are the final structural step before the factorisation argument proving the main theorems.

\subsection{Diagonal reductions}

We prove the diagonal reductions. The proofs are obtained by applying the lower-triangular reductions of \Cref{sec:lower-triangular} and then the upper-triangular reductions of \Cref{sec:upper-triangular}, which gives the desired diagonal form.

\subsubsection{The \texorpdfstring{$\ell_1$}{ell_1} case}

We begin with the $\ell_1$ case, treating first the finite dual-cotype hypothesis and then the self-similar hypothesis.

\begin{proposition}[Diagonal reduction from dual cotype]\label{prop:diagonal-reduction-from-type}
    Let $X$ be a Banach space such that $X^*$ has finite cotype, and let $T\colon \ell_1(X)\to \ell_1(X)$ be an operator. Then, for every $\varepsilon>0$, there exists an infinite subset $M\subseteq\N$ such that
    \begin{equation*}
        \norm{P_MTJ_M-D}<\varepsilon,
    \end{equation*}
    where $D\colon \ell_1(M,X)\to \ell_1(M,X)$ is the diagonal part of $P_MTJ_M$.
\end{proposition}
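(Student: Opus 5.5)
We combine \Cref{prop:lower-triangular-ell1-dual-cotype} and \Cref{prop:upper-triangular-ell1-dual-cotype}, using that passing to an infinite subset of coordinates commutes with taking triangular parts.

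\textbf{Step 1.} Apply \Cref{prop:lower-triangular-ell1-dual-cotype} to $T$ with $\varepsilon/2$. This gives an infinite set $M_1\subseteq\N$ such that $\norm{P_{M_1}TJ_{M_1}-L}<\varepsilon/2$, where $L$ is the lower triangular part of $S_1=P_{M_1}TJ_{M_1}$. Equivalently, $H=S_1-L$ is the strictly upper triangular part of $S_1$, and $\norm{H}<\varepsilon/2$. Identify $\ell_1(M_1,X)$ with $\ell_1(X)$ via the increasing enumeration of $M_1$; this preserves order and hence triangular parts.

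\textbf{Step 2.} Apply \Cref{prop:upper-triangular-ell1-dual-cotype} to $S_1$, transported to $\ell_1(X)$, with $\varepsilon/2$. This gives an infinite $M\subseteq M_1$ with $\norm{P_MS_1J_M-U}<\varepsilon/2$, where $U$ is the upper triangular part of $S=P_MS_1J_M=P_MTJ_M$. Thus the strictly lower triangular part $G=S-U$ of $S$ satisfies $\norm{G}<\varepsilon/2$.

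\textbf{Step 3.} Compress the first estimate to $M$. Since $M\subseteq M_1$ with the inherited order, the strictly upper triangular part of $S$ is $H'=P_MHJ_M$, and $\norm{P_MHJ_M}\le\norm{H}<\varepsilon/2$ because $P_M$ and $J_M$ have norm at most one. On $c_{00}(M,X)$ we have $S=D+G+H'$, with $D$ the diagonal part of $S$. Since $G$ and $H'$ are bounded, this identity extends to all of $\ell_1(M,X)$, and therefore
\[
\norm{S-D}\le\norm{G}+\norm{H'}<\varepsilon .
\]

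\textbf{Main obstacle.} The only delicate point is the bookkeeping in Step 3: we must check that the triangular truncations are compatible with compression. Concretely, the strictly upper triangular part of $P_MS_1J_M$ must equal $P_MHJ_M$, and the boundedness of the truncations must justify the decomposition $S=D+G+H'$ on the whole space. Both follow because the matrix entries of $P_MS_1J_M$ are exactly the entries $P_{m}TJ_{m'}$ with $m,m'\in M$, and the order on $M$ is the one inherited from $M_1\subseteq\N$.
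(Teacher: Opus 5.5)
Your proof is correct and follows essentially the same route as the paper: the lower-triangular reduction with $\varepsilon/2$, then the upper-triangular reduction with $\varepsilon/2$ on a further infinite subset $M\subseteq M_1$, then the triangle inequality. The only cosmetic difference is where the second reduction is applied. The paper applies \Cref{prop:upper-triangular-ell1-dual-cotype} to $L$, so that upper triangular part is already the diagonal. You apply it to $P_{M_1}TJ_{M_1}$ and handle the strictly upper part by compressing $H$. Either way, $P_MTJ_M-D$ splits into the same two pieces: $P_MHJ_M$ plus the strictly lower triangular part.
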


\begin{proof}
    Apply \Cref{prop:lower-triangular-ell1-dual-cotype} to $T$ with $\varepsilon/2$ in place of $\varepsilon$. We obtain an infinite subset $M_0\subseteq\N$ such that
    \begin{equation*}
        \norm{P_{M_0}TJ_{M_0}-L}<\varepsilon/2,
    \end{equation*}
    where $L\colon \ell_1(M_0,X)\to \ell_1(M_0,X)$ is the lower triangular part of $P_{M_0}TJ_{M_0}$.

    Identifying $\ell_1(M_0,X)$ with $\ell_1(X)$, apply \Cref{prop:upper-triangular-ell1-dual-cotype} to $L$ with $\varepsilon/2$ in place of $\varepsilon$. Thus, after passing to an infinite subset $M\subseteq M_0$, we have
    \begin{equation*}
        \norm{P_MLJ_M-D}<\varepsilon/2,
    \end{equation*}
    where $D$ is the upper triangular part of $P_MLJ_M$.

    Since $L$ is lower triangular, $P_MLJ_M$ is lower triangular. Hence its upper triangular part $D$ is precisely its diagonal part. Moreover, $L$ has the same diagonal as $P_{M_0}TJ_{M_0}$, and therefore $D$ is also the diagonal part of $P_MTJ_M$. Here, in the terms involving $L$, the projection and inclusion are taken relative to $\ell_1(M_0,X)$. Finally, using the two estimates above and the fact that passing to the smaller set $M\subseteq M_0$ is contractive, we obtain
    \begin{equation*}
        \norm{P_MTJ_M-D}\leq \norm{P_{M_0}TJ_{M_0}-L}+\norm{P_MLJ_M-D}<\varepsilon.
    \end{equation*}
    This proves the result.
\end{proof}

We now have the analogous statement in the case of $\ell_p$ or $c_0$ self-similarity.

\begin{proposition}[Diagonal reduction from self-similarity]\label{prop:diagonal-reduction-from-reprod}
    Let $E = \ell_p$ for $1 < p \leq \infty$ or $E = c_0$ and $T \colon \ell_1(E(X)) \to \ell_1(E(X))$ be an operator. Then, for every $\varepsilon>0$, there exist infinite increasing sequences $\mathbf{n} =(n_j)_{j \in \N}$ and $\mathbf{m} =(m_j)_{j \in \N}$ such that
    \begin{equation*}
        \norm{P_{\mathbf{n}, \mathbf{m}} T J_{\mathbf{n}, \mathbf{m}}-D}<\varepsilon,
    \end{equation*}
    where $D\colon \ell_1(X)\to \ell_1(X)$ is the diagonal part of $P_{\mathbf{n}, \mathbf{m}} T J_{\mathbf{n}, \mathbf{m}}$.
\end{proposition}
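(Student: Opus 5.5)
The plan is to combine \Cref{prop:lower-triangular-from-self-similarity-ell1} and \Cref{prop:upper-triangular-reduction-from-reprod-ell1}, applying one after the other on nested sets. The compressed operator $S=P_{\mathbf{n},\mathbf{m}}TJ_{\mathbf{n},\mathbf{m}}\colon\ell_1(X)\to\ell_1(X)$ has entries
\begin{equation*}
S_{i,j}=P_{m_i}\widehat{P}_{n_i}T\widehat{J}_{n_j}J_{m_j}.
\end{equation*}
Since the domain is $\ell_1(X)$, it suffices to show that the columns of the off-diagonal part have small norm. Indeed, if $H=S-D$, then
\begin{equation*}
\norm{H}\leq\sup_j\sum_{i\neq j}\norm{S_{i,j}},
\end{equation*}
because $\norm{Hx}\leq\sum_j\bigl\|\sum_{i\neq j}J_iS_{i,j}x_j\bigr\|_{\ell_1(X)}\leq\sum_j\bigl(\sum_{i\neq j}\norm{S_{i,j}}\bigr)\norm{x_j}$.

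First, I would apply \Cref{prop:lower-triangular-from-self-similarity-ell1} with $\varepsilon/2$. This gives an increasing sequence $(n'_k)$ and infinite sets $A'_k$ such that
\begin{equation*}
\norm{\widehat{P}_{n'_i}T\widehat{J}_{n'_k}J_{A'_k}}<\varepsilon/2^{i+2}\quad\text{for } k>i.
\end{equation*}
Next, I would apply \Cref{prop:upper-triangular-reduction-from-reprod-ell1} with $\varepsilon/2$, but restricted to the subspace spanned by the outer coordinates $n'_k$ and, within the $k$-th coordinate, the inner coordinates $A'_k$. Concretely, $T'=\widehat{P}_{N'}T\widehat{J}_{N'}$, with inner restrictions $J_{A'_k}$ and $P_{A'_k}$, acts on $\ell_1(N',E(X))$. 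Identifying each $E(A'_k,X)$ with $E(X)$ and $N'$ with $\N$, this space is again of the form $\ell_1(E(X))$. The proposition then yields a subsequence $n_j=n'_{k_j}$ and infinite sets $A_j\subseteq A'_{k_j}$ such that
\begin{equation*}
\norm{\widehat{P}_{M_j}T\widehat{J}_{n_j}J_{A_j}}<\varepsilon/2^{j+2},\qquad M_j=\{n_i:i>j\}.
\end{equation*}
The lower estimates survive this step, since passing to subsets only shrinks the norms: $A_j\subseteq A'_{k_j}$, and the row index $n_i=n'_{k_i}$ has $k_i\geq i$. Thus
\begin{equation*}
\norm{\widehat{P}_{n_i}T\widehat{J}_{n_j}J_{A_j}}<\varepsilon/2^{k_i+2}\leq\varepsilon/2^{i+2}\quad\text{for } j>i.
\end{equation*}
One care point in this step is that the upper triangular proposition must be applied to the compressed operator and not to $T$ itself, so that the $A_j$ are chosen inside the $A'_{k_j}$.

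Finally, I would choose $m_j\in A_j$ increasing, which is possible because each $A_j$ is infinite. Fix a column $j$.
\begin{itemize}
\item For the entries below the diagonal ($i>j$), the $\ell_1$ outer norm gives
\begin{equation*}
\sum_{i>j}\norm{S_{i,j}}\leq\norm{\widehat{P}_{M_j}T\widehat{J}_{n_j}J_{A_j}}<\varepsilon/2^{j+2}.
\end{equation*}
\item For the entries above the diagonal ($i<j$),
\begin{equation*}
\sum_{i<j}\norm{S_{i,j}}\leq\sum_{i<j}\varepsilon/2^{i+2}<\varepsilon/4.
\end{equation*}
\end{itemize}
Hence every column's off-diagonal sum is below $\varepsilon/2$, so $\norm{S-D}<\varepsilon$. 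Note that the diagonal entries of $S$ are exactly $P_{m_j}\widehat{P}_{n_j}T\widehat{J}_{n_j}J_{m_j}$, so $D$ is indeed the diagonal part of $S$.

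The main obstacle I expect is the bookkeeping in the second application: checking that the reindexing of $\ell_1(N',E(X))$ with inner sets $A'_k$ is an honest isometric identification with $\ell_1(E(X))$. This holds because the restriction of $E$ to an infinite set is again $E$, which is true for $E=\ell_p$ and $E=c_0$. With that identification, the sets produced by the upper triangular proposition translate back to subsets $A_j\subseteq A'_{k_j}$.
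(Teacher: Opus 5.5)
Your argument is the paper's, with the two triangular reductions applied in the opposite order. The paper first applies \Cref{prop:upper-triangular-reduction-from-reprod-ell1} to $T$. It then compresses to the outer coordinates $a_j$ with inner sets $B_j$ on both sides, and applies \Cref{prop:lower-triangular-from-self-similarity-ell1} to that compression. Your order works just as well, for the same reason: passing to subsequences and smaller inner sets only decreases the relevant norms, and $k_i\geq i$ preserves the weights.

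Because you compress with $P_{A'_k}$ on the range side, the translated upper estimate really reads $\sum_{i>j}\|P_{A'_{k_i}}\widehat{P}_{n_i}T\widehat{J}_{n_j}J_{A_j}x\|<\varepsilon 2^{-j-2}\|x\|$. It is not the version without inner projections that you wrote. This is harmless, because $m_i\in A_i\subseteq A'_{k_i}$.

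One step is false as written, namely $\sum_{i>j}\|S_{i,j}\|\leq\|\widehat{P}_{M_j}T\widehat{J}_{n_j}J_{A_j}\|$. The $\ell_1$-norm of the range controls $\sup_{\|x\|\leq 1}\sum_{i>j}\|S_{i,j}x\|$, not $\sum_{i>j}\sup_{\|x\|\leq 1}\|S_{i,j}x\|$. In general a column $x\mapsto(S_{i,j}x)_i$ into $\ell_1(X)$ can be bounded while $\sum_i\|S_{i,j}\|=\infty$; take $X=\ell_2$ and $S_{i,j}x=i^{-1}\langle x,e_i\rangle e_1$. So $\sup_j\sum_{i\neq j}\|S_{i,j}\|$ is the wrong intermediate quantity. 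Keep the vector inside instead:
\[
\|Hx\|\leq\sum_{j}\Bigl(\sum_{i<j}\|S_{i,j}\|\,\|x_j\|+\sum_{i>j}\|S_{i,j}x_j\|\Bigr)\leq\sum_{j}\Bigl(\frac{\varepsilon}{4}+\frac{\varepsilon}{2^{j+2}}\Bigr)\|x_j\|\leq\frac{\varepsilon}{2}\|x\|.
\]
Here you use $\sum_{i>j}\|S_{i,j}x_j\|\leq\varepsilon 2^{-j-2}\|x_j\|$, which follows from the upper estimate since $m_i\in A'_{k_i}$. You also use $\|S_{i,j}\|<\varepsilon 2^{-i-2}$ for $i<j$, which follows from the surviving lower estimates. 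This is exactly how the paper concludes, and with this repair your proof is complete.
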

\begin{proof}
    Applying \Cref{prop:upper-triangular-reduction-from-reprod-ell1} to $T$, we obtain an increasing sequence $\mathbf{a}=(a_j)_{j\in\N}$ and infinite sets $(B_j)_{j\in\N}$ such that, writing $M_j=\{a_i:i>j\}$, we have
    \begin{equation*}
        \norm{\widehat{P}_{M_j}T\widehat{J}_{a_j}J_{B_j}}<\varepsilon/2^{j+2}
    \end{equation*}
    for every $j\in\N$.

    We shall work on the same inner copies on both the domain and range sides. After identifying each $E(B_j,X)$ isometrically with $E(X)$, let
    \begin{equation*}
        V\colon \ell_1(E(X))\to \ell_1(E(X))
    \end{equation*}
    be the contraction which sends the $j$-th outer coordinate into the $a_j$-th outer coordinate and, inside that coordinate, into $E(B_j,X)$. Let
    \begin{equation*}
        U\colon \ell_1(E(X))\to \ell_1(E(X))
    \end{equation*}
    be the contraction which projects onto the outer coordinates $\{a_i:i\in\N\}$ and, in the $a_i$-th outer coordinate, onto $E(B_i,X)$, followed by the natural reindexing. Set $S=UTV$. Then $S\colon \ell_1(E(X))\to\ell_1(E(X))$ is bounded, and, under the above identifications, its matrix entries are
    \begin{equation*}
        S_{i,j}=P_{B_i}\widehat{P}_{a_i}T\widehat{J}_{a_j}J_{B_j}.
    \end{equation*}

    Applying \Cref{prop:lower-triangular-from-self-similarity-ell1} to $S$ with $\varepsilon/2$ in place of $\varepsilon$, we obtain an increasing sequence $(b_j)_{j\in\N}$ and infinite sets which, after translating back through the identifications $E(B_{b_j},X)\simeq E(X)$, we denote by $A_j\subseteq B_{b_j}$, such that whenever $j>i$,
    \begin{equation*}
        \norm{P_{B_{b_i}}\widehat{P}_{a_{b_i}}T\widehat{J}_{a_{b_j}}J_{A_j}}<\varepsilon/2^{i+2}.
    \end{equation*}
    Set $n_j=a_{b_j}$. Since each $A_j$ is infinite, choose $m_j\in A_j$ recursively so that $(m_j)_{j\in\N}$ is increasing.

    Let $D$ be the diagonal part of $P_{\mathbf{n},\mathbf{m}}TJ_{\mathbf{n},\mathbf{m}}$. We estimate the strictly upper and strictly lower triangular parts separately. Let $x=(x_j)_{j\in\N}\in\ell_1(X)$. For the strictly upper triangular part, the second reduction gives
    \begin{equation*}
        \sum_{i=1}^\infty\left\|\sum_{j>i}P_{m_i}\widehat{P}_{n_i}T\widehat{J}_{n_j}J_{m_j}x_j\right\|\leq \sum_{i=1}^\infty\sum_{j>i}\frac{\varepsilon}{2^{i+2}}\norm{x_j}\leq \frac{\varepsilon}{4}\norm{x}.
    \end{equation*}
    For the strictly lower triangular part, note that $\{n_i:i>j\}\subseteq \{a_i:i>b_j\}=M_{b_j}$ and $A_j\subseteq B_{b_j}$. Hence, by the first reduction,
    \begin{equation*}
        \sum_{i>j}\norm{P_{m_i}\widehat{P}_{n_i}T\widehat{J}_{n_j}J_{m_j}x_j}\leq \norm{\widehat{P}_{M_{b_j}}T\widehat{J}_{a_{b_j}}J_{B_{b_j}}}\norm{x_j}<\frac{\varepsilon}{2^{b_j+2}}\norm{x_j}\leq \frac{\varepsilon}{2^{j+2}}\norm{x_j}.
    \end{equation*}
    Therefore
    \begin{equation*}
        \sum_{j=1}^\infty\sum_{i>j}\norm{P_{m_i}\widehat{P}_{n_i}T\widehat{J}_{n_j}J_{m_j}x_j}\leq \frac{\varepsilon}{4}\norm{x}.
    \end{equation*}
    Combining the two estimates gives
    \begin{equation*}
        \norm{(P_{\mathbf{n},\mathbf{m}}TJ_{\mathbf{n},\mathbf{m}}-D)x}\leq \frac{\varepsilon}{2}\norm{x}.
    \end{equation*}
    Hence
    \begin{equation*}
        \norm{P_{\mathbf{n},\mathbf{m}}TJ_{\mathbf{n},\mathbf{m}}-D}<\varepsilon,
    \end{equation*}
    which finishes the proof.
\end{proof}

\subsubsection{The \texorpdfstring{$\ell_\infty$ and $c_0$}{ell-infinity and c0} cases}

We now prove the corresponding diagonal reductions for $c_0(X)$ and $\ell_\infty(X)$. The finite-cotype case is obtained by combining the lower-triangular reduction from \Cref{prop:prop-lower-triangular-ellinfty-cotype} with the upper-triangular reduction from \Cref{prop:upper-triangular-reduction-from-cotype-ellinfty}.

\begin{proposition}[Diagonal reduction from cotype]\label{prop:diagonal-reduction-from-cotype-ellinfty}
    Let $X$ be a Banach space with finite cotype, $E = c_0$ or $E = \ell_\infty$, and $T\colon E(X) \to E(X)$ be an operator. Then, for every $\varepsilon>0$, there exists an infinite subset $M\subseteq\N$ such that
    \begin{equation*}
        \norm{P_MTJ_M-D}<\varepsilon,
    \end{equation*}
    where $D\colon E(M,X)\to E(M,X)$ is the diagonal part of $P_MTJ_M$.
\end{proposition}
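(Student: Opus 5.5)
We follow the pattern of \Cref{prop:diagonal-reduction-from-type}: first pass to a lower-triangular compression, then apply the upper-triangular reduction to it.

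\textbf{Step 1: lower-triangular reduction.} Apply \Cref{prop:prop-lower-triangular-ellinfty-cotype} to $T$ with $\varepsilon/2$. This gives an infinite $M_0\subseteq\N$ with $\norm{P_{M_0}TJ_{M_0}-L}<\varepsilon/2$. Here $L$ is the lower-triangular part of $P_{M_0}TJ_{M_0}$, and it acts according to its matrix: writing $M_0=\{n_1<n_2<\dots\}$, we have $P_{n_k}Lx=\sum_{j\le k}P_{n_k}TJ_{n_j}x_{n_j}$.

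\textbf{Step 2: upper-triangular reduction on $L$.} Identify $E(M_0,X)$ with $E(X)$ and apply \Cref{prop:upper-triangular-reduction-from-cotype-ellinfty} to $L$ with $\varepsilon/2$. This gives an infinite $M\subseteq M_0$ such that, for every $m\in M$,
\begin{equation*}
    \sum_{\substack{n<m\\ n\in M}}\norm{P_mLJ_n}<\varepsilon/2.
\end{equation*}
Since $P_mLJ_n=P_mTJ_n$ for $n\le m$ in $M_0$, the diagonal entries of $P_MLJ_M$ coincide with those of $P_MTJ_M$. So $D$ is also the diagonal part of $P_MLJ_M$.

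\textbf{Step 3: the estimate for $P_MLJ_M-D$.} This is the main point, because in $\ell_\infty(X)$ matrices do not determine operators. The fix is that $L$, and hence $P_MLJ_M$, acts according to its matrix, and that matrix is lower triangular. Consequently, for each $x\in E(M,X)$ and each $m\in M$,
\begin{equation*}
    P_m(P_MLJ_M-D)x=\sum_{\substack{n<m\\ n\in M}}P_mTJ_nx_n .
\end{equation*}
Each such term is a finite sum bounded by $\sum_{n<m,\,n\in M}\norm{P_mTJ_n}\norm{x}<(\varepsilon/2)\norm{x}$. Taking the supremum over $m$ gives $\norm{P_MLJ_M-D}\le\varepsilon/2$.

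\textbf{Step 4: combining the estimates.} Write
\begin{equation*}
    P_MTJ_M-D=P_M\bigl(P_{M_0}TJ_{M_0}-L\bigr)J_M+\bigl(P_MLJ_M-D\bigr),
\end{equation*}
with $P_M,J_M$ taken relative to $E(M_0,X)$ in the first term. Compression to $M$ is contractive, so $\norm{P_MTJ_M-D}<\varepsilon$.

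\textbf{Step 5: the case $E=c_0$.} All operators above map $c_0$-sequences into $c_0$-sequences, since they are compressions of $T$, $L$ and diagonal operators, so the argument goes through unchanged.

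The only delicate step is Step 3. There one must check that the sup-norm bound really follows from the entrywise estimate. This holds only because the relevant operator acts according to a lower-triangular matrix, so every row is a finite sum.
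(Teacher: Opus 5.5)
Your proposal is correct and follows the paper's proof essentially step for step. You apply the lower-triangular reduction with $\varepsilon/2$, then the entrywise upper-triangular reduction to $L$, bound the strictly lower-triangular remainder $P_MLJ_M-D$ row by row in the sup norm, and finish with the triangle inequality and the contractivity of compression. Your explicit appeal to $L$ acting according to its lower-triangular matrix is exactly what the paper uses implicitly when it identifies $P_MLJ_M-D$ with the strictly lower triangular part of $P_MLJ_M$.
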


\begin{proof}
    Apply \Cref{prop:prop-lower-triangular-ellinfty-cotype} to $T$ with $\varepsilon/2$ in place of $\varepsilon$. We obtain an infinite subset $M_0\subseteq\N$ such that
    \begin{equation*}
        \norm{P_{M_0}TJ_{M_0}-L}<\varepsilon/2,
    \end{equation*}
    where $L\colon E(M_0,X)\to E(M_0,X)$ is the lower triangular part of $P_{M_0}TJ_{M_0}$.

    Identifying $E(M_0,X)$ with $E(X)$, apply \Cref{prop:upper-triangular-reduction-from-cotype-ellinfty} to $L$ with $\varepsilon/2$ in place of $\varepsilon$. Thus, after passing to an infinite subset $M\subseteq M_0$, we have, for every $m\in M$,
    \begin{equation*}
        \sum_{\substack{n<m\\ n\in M}}\norm{P_mLJ_n}<\varepsilon/2.
    \end{equation*}

    Let $D$ be the diagonal part of $P_MLJ_M$. Since $L$ is lower triangular, $P_MLJ_M-D$ is its strictly lower triangular part. Hence, for every $x\in E(M,X)$ and every $m\in M$,
    \begin{equation*}
        \norm{P_m(P_MLJ_M-D)x}\leq \sum_{\substack{n<m\\ n\in M}}\norm{P_mLJ_n}\norm{x}<\frac{\varepsilon}{2}\norm{x}.
    \end{equation*}
    Therefore
    \begin{equation*}
        \norm{P_MLJ_M-D}\leq\varepsilon/2.
    \end{equation*}
    In the case $E=c_0$, the operator $P_MLJ_M-D$ maps $c_0(M,X)$ into $c_0(M,X)$ because it is the difference of two operators on $c_0(M,X)$.

    Here, in the term $P_MLJ_M$, the projection and inclusion are taken relative to $E(M_0, X)$. Finally, using the two estimates above and the fact that passing to the smaller set $M\subseteq M_0$ is contractive, we obtain
    \begin{equation*}
        \norm{P_MTJ_M-D}\leq \norm{P_{M_0}TJ_{M_0}-L}+\norm{P_MLJ_M-D}<\varepsilon.
    \end{equation*}
    This proves the result.
\end{proof}

The self-similar case is obtained in the same way, using the lower- and upper-triangular reductions from \Cref{prop:lower-triangular-from-self-similarity-ellinfty,prop:upper-triangular-reduction-from-reprod-ellinfty}.

\begin{proposition}[Diagonal reduction from self-similarity]\label{prop:diagonal-reduction-from-reprod-ellinfty}
    Let $1\leq p<\infty$, $E=c_0$ or $E=\ell_\infty$, $X$ be a Banach space, and $T\colon E(\ell_p(X))\to E(\ell_p(X))$ be an operator. Then, for every $\varepsilon>0$, there exist increasing sequences $\mathbf{n}=(n_j)_{j\in\N}$ and $\mathbf{m}=(m_j)_{j\in\N}$ such that
    \begin{equation*}
        \norm{P_{\mathbf{n},\mathbf{m}}TJ_{\mathbf{n},\mathbf{m}}-D}<\varepsilon,
    \end{equation*}
    where $D\colon E(X)\to E(X)$ is the diagonal part of $P_{\mathbf{n},\mathbf{m}}TJ_{\mathbf{n},\mathbf{m}}$.
\end{proposition}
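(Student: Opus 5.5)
First apply \Cref{prop:lower-triangular-from-self-similarity-ellinfty} to $T$ with $\varepsilon/4$ in place of $\varepsilon$. This gives an infinite set $B=\{b_1<b_2<\ldots\}$ and inner sets $A_{b}$, $b\in B$, such that for every $b\in B$
\begin{equation*}
\norm{P_{A_b}\widehat{P}_bT\widehat{J}_{\{b'\in B:b'>b\}}}<\varepsilon/4 .
\end{equation*}
Then I compress $T$ to these coordinates. After identifying each $\ell_p(A_{b_k},X)$ isometrically with $\ell_p(X)$, let $V$ be the isometric embedding of $E(\ell_p(X))$ sending the $k$-th outer coordinate to outer coordinate $b_k$, inside $\ell_p(A_{b_k},X)$. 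Let $U$ be the contractive projection onto outer coordinates $b_k$ and inner sets $A_{b_k}$, followed by reindexing. Put $S=UTV$. Its entries are $P_{A_{b_i}}\widehat{P}_{b_i}T\widehat{J}_{b_j}J_{A_{b_j}}$. By the estimate above, each row $i$ of its strictly upper part satisfies $\norm{\widehat{P}_iS\widehat{J}_{\{j>i\}}}<\varepsilon/4$, and this persists under any further compression.

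Next apply \Cref{prop:upper-triangular-reduction-from-reprod-ellinfty} to $S$ with $\varepsilon/4$. This gives an infinite $M\subseteq\N$ and inner sets $C_m$, $m\in M$, with
\begin{equation*}
\sum_{n<m,\,n\in M}\norm{P_{C_m}\widehat{P}_mS\widehat{J}_n}<\varepsilon/4 .
\end{equation*}
Enumerate $M=\{c_1<c_2<\ldots\}$, choose an increasing sequence $k_j\in C_{c_j}$, and translate back. Set $n_j=b_{c_j}$ and let $m_j$ be the element of $A_{n_j}$ corresponding to $k_j$. We can arrange $(m_j)$ to be increasing by choosing the $k_j$ large enough recursively, since the $A$'s are infinite and the identification is order preserving.

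Now estimate $R=P_{\mathbf n,\mathbf m}TJ_{\mathbf n,\mathbf m}-D$ coordinatewise, since the norm of $E(X)$ is a supremum norm. For $x\in E(X)$, the $i$-th coordinate of $Rx$ is $\sum_{j\ne i}P_{m_i}\widehat{P}_{n_i}T\widehat{J}_{n_j}J_{m_j}x_j$.

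The terms with $j<i$ are bounded by $\sum_{j<i}\norm{P_{C}\widehat{P}S\widehat{J}}\norm{x}<\frac{\varepsilon}{4}\norm{x}$. This uses $m_i\in C_{c_i}$ and the fact that the single-point inclusion $J_{m_j}$ factors through $\widehat{J}_{n_j}$ contractively.

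The terms with $j>i$ form $P_{m_i}\widehat{P}_{n_i}T\widehat{J}_{\{n_j:j>i\}}J_{\mathbf{m}}(x_j)_{j>i}$. This is bounded by $\norm{P_{A_{n_i}}\widehat{P}_{n_i}T\widehat{J}_{\{b>n_i\}}}\norm{x}<\frac{\varepsilon}{4}\norm{x}$. The key point is that the vector $\sum_{j>i}\widehat{J}_{n_j}J_{m_j}x_j$ has norm at most $\norm{x}$ in $E(\ell_p(X))$, because the outer norm is a sup norm.

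Hence every coordinate of $Rx$ is at most $\frac{\varepsilon}{2}\norm{x}$, so $\norm{R}\le\varepsilon/2<\varepsilon$.

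The main obstacle is the case $E=\ell_\infty$, where $T$ need not act according to its matrix. I would handle this by never summing the matrix for $j>i$: the operator $P_{m_i}\widehat{P}_{n_i}TJ_{\mathbf n,\mathbf m}$ applied to $x$ splits exactly as $T$ applied to the finitely supported part $\sum_{j\le i}$ plus $T$ applied to the tail vector. The tail is controlled as one block by the lower-triangular estimate. Thus $R$ is defined coordinatewise as above and is a genuine bounded operator.

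For $E=c_0$ one also checks that $D$ and $P_{\mathbf n,\mathbf m}TJ_{\mathbf n,\mathbf m}$ map $c_0(X)$ into itself; this is clear, since both are compositions of operators on $c_0$-sums. The diagonal $D$ of the compression is $\diag(P_{m_j}\widehat{P}_{n_j}T\widehat{J}_{n_j}J_{m_j})$ as required.
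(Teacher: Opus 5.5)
Your proposal is correct and follows the paper's proof essentially step for step. Both arguments apply the lower-triangular reduction at $\varepsilon/4$, compress to $S=UTV$, and then apply the upper-triangular reduction to $S$ at $\varepsilon/4$. Both then pick increasing $m_j$ in the translated inner sets and bound the two off-diagonal parts separately, controlling the $j>i$ tail as a single block so that $T$ never needs to act according to its matrix on $\ell_\infty$.
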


\begin{proof}
    Apply \Cref{prop:lower-triangular-from-self-similarity-ellinfty} to $T$ with $\varepsilon/4$ in place of $\varepsilon$. We obtain an infinite set $M_0\subseteq\N$ and, for each $m\in M_0$, an infinite set $B_m\subseteq\N$ such that, writing $M_m^0=\{n\in M_0:n>m\}$, we have
    \begin{equation*}
        \norm{P_{B_m}\widehat{P}_mT\widehat{J}_{M_m^0}}<\varepsilon/4
    \end{equation*}
    for every $m\in M_0$.

    Let $M_0=\{a_j:j\in\N\}$, where $(a_j)_{j\in\N}$ is increasing. We shall work on the same inner copies on both the domain and range sides. After identifying each $\ell_p(B_{a_j},X)$ isometrically with $\ell_p(X)$, let
    \begin{equation*}
        V\colon E(\ell_p(X))\to E(\ell_p(X))
    \end{equation*}
    be the contraction which sends the $j$-th outer coordinate into the $a_j$-th outer coordinate and, inside that coordinate, into $\ell_p(B_{a_j},X)$. Let
    \begin{equation*}
        U\colon E(\ell_p(X))\to E(\ell_p(X))
    \end{equation*}
    be the contraction which projects onto the outer coordinates $\{a_i:i\in\N\}$ and, in the $a_i$-th outer coordinate, onto $\ell_p(B_{a_i},X)$, followed by the natural reindexing. Set $S=UTV$. Then $S\colon E(\ell_p(X))\to E(\ell_p(X))$ is bounded, and, under the above identifications, its matrix entries are
    \begin{equation*}
        \widehat{P}_iS\widehat{J}_j=P_{B_{a_i}}\widehat{P}_{a_i}T\widehat{J}_{a_j}J_{B_{a_j}}.
    \end{equation*}

    Applying \Cref{prop:upper-triangular-reduction-from-reprod-ellinfty} to $S$ with $\varepsilon/4$ in place of $\varepsilon$, we obtain an infinite set $K\subseteq\N$ and, for each $k\in K$, an infinite set $A_k\subseteq\N$ such that, for every $k\in K$,
    \begin{equation*}
        \sum_{\substack{i<k\\ i\in K}}\norm{P_{A_k}\widehat{P}_kS\widehat{J}_i}<\varepsilon/4.
    \end{equation*}

    Let $K=\{k_j:j\in\N\}$, where $(k_j)_{j\in\N}$ is increasing, and set $n_j=a_{k_j}$. Translating each $A_{k_j}$ back through the identification $\ell_p(B_{n_j},X)\simeq\ell_p(X)$, we obtain an infinite set, still denoted by $A_{k_j}$, with $A_{k_j}\subseteq B_{n_j}$ and
    \begin{equation*}
        \sum_{i<j}\norm{P_{A_{k_j}}\widehat{P}_{n_j}T\widehat{J}_{n_i}J_{B_{n_i}}}<\varepsilon/4
    \end{equation*}
    for every $j\in\N$. Choose $m_j\in A_{k_j}$ recursively so that $(m_j)_{j\in\N}$ is increasing.

    Let $D$ be the diagonal part of $P_{\mathbf{n},\mathbf{m}}TJ_{\mathbf{n},\mathbf{m}}$. We estimate the two off-diagonal parts separately. Let $x=(x_j)_{j\in\N}\in E(X)$. For the terms with $i<j$, we have, for every $j\in\N$,
    \begin{equation*}
        \left\|\sum_{i<j}P_{m_j}\widehat{P}_{n_j}T\widehat{J}_{n_i}J_{m_i}x_i\right\|\leq \sum_{i<j}\norm{P_{A_{k_j}}\widehat{P}_{n_j}T\widehat{J}_{n_i}J_{B_{n_i}}}\norm{x_i}<\frac{\varepsilon}{4}\norm{x}.
    \end{equation*}
    For the terms with $i>j$, note that $\{n_i:i>j\}\subseteq M_{n_j}^0$ and $m_j\in B_{n_j}$. Hence, for every $j\in\N$,
    \begin{equation*}
        \left\|P_{m_j}\widehat{P}_{n_j}T\widehat{J}_{\{n_i:i>j\}}(J_{m_i}x_i)_{i>j}\right\|\leq \norm{P_{B_{n_j}}\widehat{P}_{n_j}T\widehat{J}_{M_{n_j}^0}}\norm{x}<\frac{\varepsilon}{4}\norm{x}.
    \end{equation*}
    Combining the two estimates gives
    \begin{equation*}
        \norm{(P_{\mathbf{n},\mathbf{m}}TJ_{\mathbf{n},\mathbf{m}}-D)x}\leq\frac{\varepsilon}{2}\norm{x}.
    \end{equation*}
    Therefore
    \begin{equation*}
        \norm{P_{\mathbf{n},\mathbf{m}}TJ_{\mathbf{n},\mathbf{m}}-D}<\varepsilon.
    \end{equation*}
    This proves the result.
\end{proof}

\subsection{Proof of the main theorems}

Before we move to the proof of our main results, we state the following two elementary observations.

\begin{proposition}\label{prop:diagonal-dichotomy}
    Let $E=\ell_1$, $E=\ell_\infty$, or $E=c_0$, and $X$ be a Banach space with the $C$-PFP. Then, for every diagonal operator $D\colon E(X)\to E(X)$, either $D$ or $I_{E(X)}-D$ factors the identity on $E(X)$ with constant $C$.
\end{proposition}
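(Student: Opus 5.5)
Write $D=\diag(D_n:n\in\N)$ with $\sup_n\norm{D_n}\leq\norm{D}$. The plan is to apply the $C$-PFP of $X$ coordinatewise and then use pigeonhole to select an infinite set of coordinates on which the same alternative holds.

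First, apply the $C$-PFP to each $D_n\in\mathscr{B}(X)$. For every $n$ we obtain operators $U_n,V_n\in\mathscr{B}(X)$ with
\begin{equation*}
U_nD_nV_n=I_X\quad\text{or}\quad U_n(I_X-D_n)V_n=I_X,\qquad \norm{U_n}\norm{V_n}\leq C.
\end{equation*}
Rescaling $U_n$ by $t_n>0$ and $V_n$ by $t_n^{-1}$, we may assume $\norm{U_n}\leq 1$ and $\norm{V_n}\leq C$, so that all the factors are uniformly bounded. One of the two alternatives occurs for infinitely many $n$. Let $M=\{m_1<m_2<\cdots\}$ be an infinite set on which, say, the first alternative occurs; the second case is symmetric, since $I_{E(X)}-D=\diag(I_X-D_n:n\in\N)$.

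Next, define operators on $E(X)$ by
\begin{equation*}
V(x_j)_{j}=\sum_j J_{m_j}V_{m_j}x_j,\qquad U y=(U_{m_j}P_{m_j}y)_j .
\end{equation*}
Since the norm of $E(X)$ is computed from the norms of the coordinates in a $1$-unconditional way, and the norms are dominated coordinatewise, we get $\norm{V}\leq C$ and $\norm{U}\leq 1$. Concretely, $V$ is the reindexing isometry $E(X)\to E(M,X)$ composed with the diagonal operator $\diag(V_{m_j})$, and $U$ is the analogous composition of $\diag(U_{m_j})$ with the projection onto $E(M,X)$.

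Because $D$ is diagonal, we have $P_{m_j}DJ_{m_i}=0$ for $i\neq j$ and $P_{m_j}DJ_{m_j}=D_{m_j}$. Hence
\begin{equation*}
UDV(x_j)_j=(U_{m_j}D_{m_j}V_{m_j}x_j)_j=(x_j)_j.
\end{equation*}
This gives $UDV=I_{E(X)}$ with $\norm{U}\norm{V}\leq C$.

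The only point requiring care is the case $E=\ell_\infty$, where the matrix of an operator does not determine it. Here, however, $D$, $U$ and $V$ are all defined coordinatewise and act according to their matrices, so the computation above is valid. For $E=c_0$, one checks that $U$ and $V$ preserve $c_0(X)$, which holds because they act coordinatewise with uniformly bounded entries. The normalisation in the first step is what makes $\diag(V_{m_j})$ and $\diag(U_{m_j})$ bounded, and with it the bound $\norm{U}\norm{V}\leq C$ is exact.
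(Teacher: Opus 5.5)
Your proof is correct and follows essentially the same route as the paper. Both apply the $C$-PFP coordinatewise, use pigeonhole to pass to an infinite set $M$ on which one alternative holds, normalise the factors so they are uniformly bounded, and assemble them into diagonal operators on $E(M,X)\cong E(X)$. The only cosmetic difference is that you build the reindexing into $U$ and $V$ directly, whereas the paper identifies $E(M,X)$ with $E(X)$ at the end.
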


\begin{proof}
    Write $D=\diag(D_n:n\in\N)$, where $D_n\in\mathscr{B}(X)$. Since $X$ has the $C$-PFP, for each $n\in\N$ either $D_n$ or $I_X-D_n$ factors the identity on $X$ with constant $C$.

    By passing to an infinite subset $M\subseteq\N$, we may suppose that the same alternative holds for every $n\in M$. Suppose first that $D_n$ factors the identity on $X$ with constant $C$ for every $n\in M$. Thus, for each $n\in M$, there are operators $\Phi_n,\Psi_n\in\mathscr{B}(X)$ such that
    \begin{equation*}
        \Phi_nD_n\Psi_n=I_X \qquad\text{and}\qquad \norm{\Phi_n}\norm{\Psi_n}\leq C.
    \end{equation*}
    After rescaling the factors, we may assume $\norm{\Phi_n}\leq C$ and $\norm{\Psi_n}\leq 1$ for every $n\in M$. Define diagonal operators
    \begin{equation*}
        \Phi=\diag(\Phi_n:n\in M), \qquad \Psi=\diag(\Psi_n:n\in M).
    \end{equation*}
    Then $\Phi,\Psi$ are bounded operators on $E(M,X)$, $\norm{\Phi}\norm{\Psi}\leq C$, and
    \begin{equation*}
        \Phi P_M D J_M \Psi=I_{E(M,X)}.
    \end{equation*}
    Identifying $E(M, X)$ with $E(X)$, this shows that $D$ factors the identity on $E(X)$ with constant $C$.

    If instead $I_X-D_n$ factors the identity on $X$ with constant $C$ for every $n\in M$, the same argument applied to $I_{E(X)}-D$ shows that $I_{E(X)}-D$ factors the identity on $E(X)$ with constant $C$.
\end{proof}

\begin{lemma}[Stability of factorisation under perturbations]\label{lmm:factorisation-perturbation}
    Let $X$ be a Banach space and $T, S\in\mathscr{B}(X)$. Suppose that $T$ factors the identity on $X$ with constant $C$ and that
    \begin{equation*}
        C\norm{S-T}<1/2.
    \end{equation*}
    Then $S$ factors the identity on $X$ with constant $2C$.
\end{lemma}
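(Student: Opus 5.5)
The plan is a Neumann series argument. Since $T$ factors the identity with constant $C$, there are operators $A,B\in\mathscr{B}(X)$ with $ATB=I_X$ and $\norm{A}\norm{B}\leq C$. Rescaling $A$ and $B$ by reciprocal scalars does not change $ATB$ or $\norm{A}\norm{B}$, so we may assume $\norm{B}=1$ and $\norm{A}\leq C$.

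First, we compare $ASB$ with the identity. We have
\begin{equation*}
    ASB = ATB + A(S-T)B = I_X + A(S-T)B,
\end{equation*}
and
\begin{equation*}
    \norm{A(S-T)B}\leq \norm{A}\norm{S-T}\norm{B}\leq C\norm{S-T}<1/2.
\end{equation*}
By the Neumann series, $R=ASB$ is therefore invertible in $\mathscr{B}(X)$, with
\begin{equation*}
    \norm{R^{-1}}\leq \frac{1}{1-1/2}=2.
\end{equation*}

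Next, we set $U=R^{-1}A$ and $V=B$. Then
\begin{equation*}
    USV=R^{-1}ASB=R^{-1}R=I_X
\end{equation*}
and
\begin{equation*}
    \norm{U}\norm{V}\leq \norm{R^{-1}}\norm{A}\norm{B}\leq 2C.
\end{equation*}
So $S$ factors the identity with constant $2C$.

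No step is a real obstacle. The only point needing care is the normalisation $\norm{B}=1$, $\norm{A}\leq C$, which is what lets the hypothesis $C\norm{S-T}<1/2$ control $\norm{A(S-T)B}$ directly. Without it, we would use $\norm{A}\norm{B}\leq C$ directly, which gives the same bound anyway.
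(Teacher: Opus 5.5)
Your proposal is correct and follows essentially the same route as the paper: write $ASB = I_X + A(S-T)B$, invert it by a Neumann series with inverse of norm at most $2$, and take $R^{-1}A$ and $B$ as the new factors, giving constant $2C$. As you note yourself, the normalisation $\norm{B}=1$ is not needed; the paper uses $\norm{\Phi}\norm{\Psi}\leq C$ directly.
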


\begin{proof}
    Since $T$ factors the identity with constant $C$, there exist operators $\Phi,\Psi\in\mathscr{B}(X)$ such that
    \begin{equation*}
        \Phi T\Psi=I_X \qquad\text{and}\qquad \norm{\Phi}\norm{\Psi}\leq C.
    \end{equation*}
    Hence
    \begin{equation*}
        \norm{\Phi(S-T)\Psi}\leq C\norm{S-T}<1/2.
    \end{equation*}
    Therefore
    \begin{equation*}
        \Phi S\Psi=I_X+\Phi(S-T)\Psi
    \end{equation*}
    is invertible, and
    \begin{equation*}
        \norm{(\Phi S\Psi)^{-1}}\leq \frac{1}{1-\norm{\Phi(S-T)\Psi}}<2.
    \end{equation*}
    Thus
    \begin{equation*}
        I_X=(\Phi S\Psi)^{-1}\Phi S\Psi.
    \end{equation*}
    Consequently, the identity factors through $S$, and the factorisation constant is at most
    \begin{equation*}
        \norm{(\Phi S\Psi)^{-1}\Phi}\norm{\Psi} \leq \norm{(\Phi S\Psi)^{-1}}\norm{\Phi}\norm{\Psi} <2C.
     \qedhere \end{equation*}
\end{proof}

Equipped with these observations we can finally give a proof for our main results.

\begin{proof}[Proof of \Cref{th: main1}]
    Let $C\geq 1$ be such that $X$ has the $C$-PFP, and choose $\delta>0$ such that $C\delta<1/2$.

    Suppose first that $X^*$ has finite cotype. Let $T\colon \ell_1(X)\to \ell_1(X)$ be an operator. By \Cref{prop:diagonal-reduction-from-type}, there exist an infinite set $M\subseteq\N$ and a diagonal operator $D\colon \ell_1(M,X)\to\ell_1(M,X)$ such that
    \begin{equation*}
        \norm{P_MTJ_M-D}<\delta.
    \end{equation*}
    By \Cref{prop:diagonal-dichotomy}, either $D$ or $I_{\ell_1(M,X)}-D$ factors the identity on $\ell_1(M,X)$ with constant $C$. If $D$ does, then \Cref{lmm:factorisation-perturbation} implies that $P_MTJ_M$ factors the identity with constant $2C$. Hence $T$ factors the identity on $\ell_1(X)$ with constant $2C$. If instead $I_{\ell_1(M,X)}-D$ factors the identity, then, since
    \begin{equation*}
        \norm{P_M(I_{\ell_1(X)}-T)J_M-(I_{\ell_1(M,X)}-D)}<\delta,
    \end{equation*}
    the same argument shows that $I_{\ell_1(X)}-T$ factors the identity on $\ell_1(X)$ with constant $2C$. Thus $\ell_1(X)$ has the UPFP.

    Now suppose that $X\simeq E(X)$, where $E=\ell_p$ for some $1<p\leq\infty$, or $E=c_0$. Since $\ell_1(X)\simeq \ell_1(E(X))$, it is enough to show that $\ell_1(E(X))$ has the UPFP. Let $T\colon \ell_1(E(X))\to \ell_1(E(X))$ be an operator. By \Cref{prop:diagonal-reduction-from-reprod}, there exist increasing sequences $\mathbf{n}=(n_j)_{j\in\N}$ and $\mathbf{m}=(m_j)_{j\in\N}$, and a diagonal operator $D\colon \ell_1(X)\to\ell_1(X)$, such that
    \begin{equation*}
        \norm{P_{\mathbf{n},\mathbf{m}}TJ_{\mathbf{n},\mathbf{m}}-D}<\delta.
    \end{equation*}
    Again, \Cref{prop:diagonal-dichotomy} and \Cref{lmm:factorisation-perturbation} imply that either $P_{\mathbf{n},\mathbf{m}}TJ_{\mathbf{n},\mathbf{m}}$ or $P_{\mathbf{n},\mathbf{m}}(I_{\ell_1(E(X))}-T)J_{\mathbf{n},\mathbf{m}}$ factors the identity on $\ell_1(X)$ with constant $2C$. Hence either $T$ or $I_{\ell_1(E(X))}-T$ factors the identity on $\ell_1(X)$ with constant $2C$. Since $\ell_1(E(X))\simeq\ell_1(X)$, this gives the UPFP for $\ell_1(E(X))$, and therefore for $\ell_1(X)$.

    Finally, if $X\simeq \ell_1(X)$, then $\ell_1(X)$ has the UPFP by isomorphic invariance. The primariness conclusion follows from the UPFP and Pe{\l}czy\'nski's decomposition method, using $\ell_1(X)\simeq \ell_1(\ell_1(X))$.
\end{proof}

\begin{proof}[Proof of \Cref{th: main2}]
    Let $C\geq 1$ be such that $X$ has the $C$-PFP, and choose $\delta>0$ such that $C\delta<1/2$. Let $E=c_0$ or $E=\ell_\infty$.

    Suppose first that $X$ has finite cotype. Let $T\colon E(X)\to E(X)$ be an operator. By \Cref{prop:diagonal-reduction-from-cotype-ellinfty}, there exist an infinite set $M\subseteq\N$ and a diagonal operator $D\colon E(M,X)\to E(M,X)$ such that
    \begin{equation*}
        \norm{P_MTJ_M-D}<\delta.
    \end{equation*}
    By \Cref{prop:diagonal-dichotomy}, either $D$ or $I_{E(M,X)}-D$ factors the identity on $E(M,X)$ with constant $C$. If $D$ does, then \Cref{lmm:factorisation-perturbation} implies that $P_MTJ_M$ factors the identity with constant $2C$. Hence $T$ factors the identity on $E(X)$ with constant $2C$. If instead $I_{E(M,X)}-D$ factors the identity, then, since
    \begin{equation*}
        \norm{P_M(I_{E(X)}-T)J_M-(I_{E(M,X)}-D)}<\delta,
    \end{equation*}
    the same argument shows that $I_{E(X)}-T$ factors the identity on $E(X)$ with constant $2C$. Thus $E(X)$ has the UPFP.

    Now suppose that $X\simeq \ell_p(X)$ for some $1\leq p<\infty$. Since $E(X)\simeq E(\ell_p(X))$, it is enough to show that $E(\ell_p(X))$ has the UPFP.
    
    Let $T\colon E(\ell_p(X))\to E(\ell_p(X))$ be an operator. By \Cref{prop:diagonal-reduction-from-reprod-ellinfty}, there exist increasing sequences $\mathbf{n}=(n_j)_{j\in\N}$ and $\mathbf{m}=(m_j)_{j\in\N}$, and a diagonal operator $D\colon E(X)\to E(X)$, such that
    \begin{equation*}
        \norm{P_{\mathbf{n},\mathbf{m}}TJ_{\mathbf{n},\mathbf{m}}-D}<\delta.
    \end{equation*}
    Again, \Cref{prop:diagonal-dichotomy} and \Cref{lmm:factorisation-perturbation} imply that either $P_{\mathbf{n},\mathbf{m}}TJ_{\mathbf{n},\mathbf{m}}$ or $P_{\mathbf{n},\mathbf{m}}(I_{E(\ell_p(X))}-T)J_{\mathbf{n},\mathbf{m}}$ factors the identity on $E(X)$ with constant $2C$. Hence either $T$ or $I_{E(\ell_p(X))}-T$ factors the identity on $E(X)$ with constant $2C$. Since $E(\ell_p(X))\simeq E(X)$, this gives the UPFP for $E(\ell_p(X))$, and therefore for $E(X)$.

    Since $E$ was arbitrary, both $c_0(X)$ and $\ell_\infty(X)$ have the UPFP. The primariness conclusion follows from the UPFP and Pe{\l}czy\'nski's decomposition method, using $c_0(X)\simeq c_0(c_0(X))$ and $\ell_\infty(X)\simeq \ell_\infty(\ell_\infty(X))$.
\end{proof}
\bigskip
\section{\texorpdfstring{An uncountable $\ell_1$-sum}{An uncountable l1-sum}}\label{sec:uncountable-l1-sum}

Although the preceding transfer principles are stated for countable vector-valued sequence spaces, the diagonal-reduction arguments are not inherently countable. With the evident modifications, the same methods apply to spaces indexed by an arbitrary set $\Gamma$, such as $\ell_1(\Gamma,X)$, $c_0(\Gamma,X)$ and $\ell_\infty(\Gamma,X)$. The only caveat is that for $X \simeq \ell_1(X)$, the UPFP of $X$ does not automatically transfer to the UPFP of $\ell_1(\Gamma, X)$ when $\Gamma$ is uncountable.

The uncountable setting allows an additional diagonal-reduction mechanism. In the countable arguments above, both upper- and lower-triangular reductions are needed in order to reach a diagonal operator. For uncountable index sets, one can sometimes replace this two-sided selection by a one-sided finite-interference estimate, followed by a free-set argument which removes the finitely many exceptional coordinates on a large subset of the index set.

We now apply this mechanism to the summand $L_1[0,1]$. Although $L_1[0,1]$ is not covered by the finite-dual-cotype hypothesis in \Cref{th: main1}, it has enough additional measure-theoretic structure to recover the required diagonal reduction. Capon's localisation argument gives the UPFP for $L_1[0,1]$, while Kalton's representation theorem gives the finite-interference estimate for operators from $L_1[0,1]$ into $\ell_1(\Gamma,L_1[0,1])$. Combining this estimate with Hajnal's free set theorem yields the diagonal reduction for every uncountable $\Gamma$. Together with the countable case, this proves \Cref{th:main3}. The application to $C[0,1]^*$ then follows from its classical representation as an $\ell_1$-sum of copies of $L_1[0,1]$.

We begin by recalling the localisation argument of Capon. If $B\subseteq[0,1]$ is measurable, we write
\begin{equation*}
    L_1(B)=\{f\in L_1[0,1]: f=0 \text{ a.e. on } [0,1]\setminus B\},
\end{equation*}
and we denote by $M_B \colon L_1[0,1] \to L_1(B)$ the multiplication operator $f\mapsto \mathbf{1}_B f$ and by $\jmath_B \colon L_1(B) \to L_1[0,1]$ the canonical inclusion. Thus, when $R\colon L_1[0,1]\to L_1[0,1]$ is an operator, $M_B R$ denotes the map $f\mapsto \mathbf{1}_B Rf$. Finally, for any operator $S \colon L_1[0,1] \to L_1[0,1]$ we can associate to it a family of measures $(\nu_t)_{t \in [0,1]}$ according to Kalton's representation theorem \cite[Theorem 3.1 (b)]{Kalton1978}.

\begin{lemma}[Capon localisation]\label{lem:capon-localisation}
Let $S\colon L_1[0,1]\to L_1[0,1]$ be an operator, and let $(\nu_t)_{t\in[0,1]}$ be the family of measures associated with $S$ by Kalton's representation theorem. Suppose that, for some $a>0$, the set
\begin{equation*}
    E_a(S)=\{t\in[0,1]:|\nu_t(\{t\})|>a\}
\end{equation*}
has positive measure. Then, for every $0<\varepsilon<a$, there is a measurable set $A\subseteq[0,1]$ of positive measure such that, whenever $B\subseteq A$ has positive measure, the operator
\begin{equation*}
    M_BS\jmath_B\colon L_1(B)\to L_1(B)
\end{equation*}
is an isomorphism onto $L_1(B)$, and
\begin{equation*}
    \norm{M_BS\jmath_B f}\geq (a-\varepsilon)\norm{f}\qquad(f\in L_1(B)).
\end{equation*}
Consequently,
\begin{equation*}
    \norm{(M_BS\jmath_B)^{-1}}\leq\frac{1}{a-\varepsilon}.
\end{equation*}
\end{lemma}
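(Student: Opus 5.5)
The plan is to split $S$ into a multiplication part coming from the atoms $\nu_t(\{t\})$ and a remainder with no diagonal atoms, and then to localise to a set $A$ on which the first part is nearly a scalar and the second part has small compressions. I read Kalton's representation as describing $S$ through its columns: $\nu_t$ plays the role of the image of the point mass at $t$, and norms of compressions are computed columnwise,
\begin{equation*}
    \norm{M_BS\jmath_B}=\operatorname{ess\,sup}_{t\in B}|\nu_t|(B).
\end{equation*}
This is the $L_1$ analogue of the column formula on $\ell_1$. If Kalton's theorem is normalised via the adjoint instead, the same formula holds after transposing. This identification is the first thing I would check against \cite[Theorem 3.1]{Kalton1978}.

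First I decompose $S$. Set $c(t)=\nu_t(\{t\})$; this is a bounded measurable function with $|c|\leq\norm{S}$. Write $\nu_t=c(t)\delta_t+\nu'_t$, where $\nu'_t(\{t\})=0$. Then $S=M_c+R$, where $M_c$ is multiplication by $c$ and $R$ is the operator represented by $(\nu'_t)$; $R$ is bounded since $\norm{R}\leq 2\norm{S}$. Now fix $0<\varepsilon<a$ and $\eta=\varepsilon/3$. Since $E_a(S)$ has positive measure and $c$ is bounded, I partition the disc of radius $\norm{S}$ into finitely many cells of diameter $<\eta$. This gives a positive-measure set $A_0\subseteq E_a(S)$ and a scalar $c_0$ with $|c_0|\geq a$ and $|c(t)-c_0|<\eta$ on $A_0$.

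Next comes the localisation of $R$, which is the main obstacle. Let $U_n(t)$ be the dyadic interval of rank $n$ containing $t$. Since $\bigcap_n U_n(t)=\{t\}$ and $\nu'_t(\{t\})=0$, countable additivity gives $|\nu'_t|(U_n(t))\to0$ for every $t$. By measurability of $t\mapsto|\nu'_t|(U_n(t))$ (to be extracted from the weak$^*$-measurability in Kalton's theorem) and Egorov's theorem, there are $n$ and a positive-measure set $A_1\subseteq A_0$ with $|\nu'_t|(U_n(t))<\eta$ for all $t\in A_1$. Some dyadic interval $I$ of rank $n$ meets $A_1$ in positive measure; put $A=A_1\cap I$. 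For $t\in A$ we have $U_n(t)=I\supseteq A$, so the column formula gives $\norm{M_AR\jmath_A}\leq\eta$. For any $B\subseteq A$ of positive measure, $M_BR\jmath_B$ is a compression of $M_AR\jmath_A$, so $\norm{M_BR\jmath_B}\leq\eta$ as well. This heredity is what makes a single set $A$ work for all $B$.

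Finally, for $B\subseteq A$ of positive measure I write
\begin{equation*}
    M_BS\jmath_B=c_0I_{L_1(B)}+M_{(c-c_0)\mathbf 1_B}+M_BR\jmath_B.
\end{equation*}
The last two terms have total norm $<2\eta<|c_0|$. By a Neumann series $M_BS\jmath_B$ is an isomorphism onto $L_1(B)$, and for $f\in L_1(B)$,
\begin{equation*}
    \norm{M_BS\jmath_Bf}\geq(|c_0|-2\eta)\norm f\geq(a-\varepsilon)\norm f.
\end{equation*}
The bound $\norm{(M_BS\jmath_B)^{-1}}\leq 1/(a-\varepsilon)$ follows immediately.

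The delicate points are all in the second step: checking that the column-norm formula and the measurability hold in Kalton's normalisation. If the representation is row-wise instead, I would run the same dyadic argument on $S^*$ acting on $L_\infty$, using $|\nu'_t|(U_n(t))\to0$ in the same way.
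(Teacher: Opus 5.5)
Your reading of Kalton's theorem as a column representation is wrong, and this breaks the localisation of $R$; your fallback via $S^*$ does not repair it. Kalton's representation is row-wise: $(Sf)(y)=\int f\,d\nu_y$ for almost every $y$, where $y$ is the \emph{target} variable. This is how the paper uses it in \Cref{lem:finite-capture}.

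Pointwise bounds on rows, such as $|\nu'_y|(A)<\eta$ for $y\in A$, control $M_AR\jmath_A$ only as an operator on $L_\infty(A)$, because $|\int_A f\,d\nu'_y|\leq\norm{f}_\infty|\nu'_y|(A)$. For the $L_1$-norm, Kalton provides only the integrated estimate $\int|\nu_y|(E)\,dy\leq\norm{S}\lambda(E)$, where $\lambda$ is Lebesgue measure. So your column formula is not available, and your Egorov step does not give $\norm{M_AR\jmath_A}\leq\eta$.

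Here is an example. Take $0<\eta'<\eta$, $N>2$, and $S=I+R$ with $Rf=\eta'N\bigl(\int_0^{1/N}f\,d\lambda\bigr)\mathbf{1}_{[1/2,1]}$. Then $c\equiv1$ and $|\nu'_y|([0,1])\leq\eta'$ for every $y$, so your construction may return $n=0$ and $A=[0,1]$. But $f=N\mathbf{1}_{[0,1/N]}-\eta'N\mathbf{1}_{[1/2,1]}$ satisfies $Sf=N\mathbf{1}_{[0,1/N]}$. Hence $\norm{Sf}=1$ while $\norm{f}=1+\eta'N/2$, and the lower bound fails for large $N$.

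Passing to $S^*$ does not help. The $L_1$-norm of $M_BS\jmath_B$ equals the $L_\infty$-norm of its adjoint, which is governed by the rows of $S^*$, i.e.\ the columns of $S$. The $\nu_y$ instead describe $S^*$ column-wise, since $S^*g$ is the density of $\int g(y)\,\nu_y\,dy$. So the same pointwise information again controls the wrong norm.

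The missing idea is to integrate over the target variable instead of using Egorov, exactly as in the proof of \Cref{lem:finite-capture}. Keep $|\nu'_y|(U_n(y))\to0$ for each $y$, and define $\rho_n(E)=\int|\nu'_y|(E\cap U_n(y))\,dy$. Since $|\nu'_y|\leq|\nu_y|$, Kalton's estimate gives $\rho_n\leq\norm{S}\lambda$, so $\rho_n$ has a density $h_n$. By dominated convergence, $\int h_n\,d\lambda=\int|\nu'_y|(U_n(y))\,dy\to0$. Hence for some $n$ the set $A_0\cap\{h_n<\eta\}$ has positive measure. Intersect it with a dyadic interval $I$ of rank $n$ meeting it in positive measure, and call the result $A$. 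For $B\subseteq A$, $f\in L_1(B)$ and $y\in B$ we have $B\subseteq I=U_n(y)$, so Tonelli gives
\[
\norm{M_BR\jmath_Bf}\leq\int_B\int_B|f|\,d|\nu'_y|\,dy\leq\int_B|f|\,d\rho_n=\int_B|f|h_n\,d\lambda\leq\eta\norm{f}.
\]
Kalton's estimate also gives $|c|\leq\norm{S}$ almost everywhere, since $\int_E|c|\,d\lambda\leq\int_E|\nu_y|(E)\,dy\leq\norm{S}\lambda(E)$.

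With this replacement the rest of your argument goes through: the splitting $S=M_c+R$, the choice of $c_0$, heredity to $B\subseteq A$, and the Neumann series with $|c_0|-2\eta>a-\varepsilon$. The paper itself gives no argument here; it only cites Capon's Proposition~1.2.
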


\begin{proof}
    This is Capon's localisation argument \cite[Proposition~1.2]{Capon1980}. Capon stated the result for the value $a=1/2$; the proof is unchanged with $1/2$ replaced by an arbitrary $a>0$.
\end{proof}

As an immediate consequence, we obtain the following.

\begin{corollary}\label{cor:L1-has-UPFP}
    The space $L_1[0,1]$ has the UPFP.
\end{corollary}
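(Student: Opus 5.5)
Given $R\in\mathscr{B}(L_1[0,1])$, we apply Kalton's representation theorem to both $R$ and $I-R$ and use \Cref{lem:capon-localisation} with the fixed value $a=1/4$ and $\varepsilon=1/8$. This produces a uniform factorisation constant.

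\textbf{Step 1: a large atom at a positive-measure set of points.} Let $(\nu_t)$ be the family of measures associated with $R$. The family associated with $I-R$ is then $(\delta_t-\nu_t)$, since the representation is linear and the identity corresponds to $\delta_t$. For every $t$ we have
\begin{equation*}
|\nu_t(\{t\})|+|1-\nu_t(\{t\})|\geq 1,
\end{equation*}
so at each $t$ one of the two atoms at $t$ exceeds $1/4$. Hence $E_{1/4}(R)\cup E_{1/4}(I-R)$ is, up to a null set, all of $[0,1]$, and at least one of these sets has positive measure. Relabelling if necessary, let $S\in\{R,I-R\}$ satisfy $\lambda(E_{1/4}(S))>0$. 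Measurability of $E_a(S)$ is part of Kalton's theorem, since $t\mapsto\nu_t(\{t\})$ is measurable there.

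\textbf{Step 2: localise.} By \Cref{lem:capon-localisation} with $a=1/4$ and $\varepsilon=1/8$, there is a set $A$ of positive measure such that $M_AS\jmath_A\colon L_1(A)\to L_1(A)$ is an isomorphism onto $L_1(A)$ with inverse of norm at most $8$.

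\textbf{Step 3: transport back to $L_1[0,1]$.} Pick a measure-preserving-up-to-scaling identification $L_1(A)\cong L_1[0,1]$. Concretely, there is an isometric isomorphism $\Theta\colon L_1[0,1]\to L_1(A)$ induced by a measure isomorphism between $[0,1]$ and $A$ with normalised measure; alternatively, $L_1(A)$ is isometric to $L_1[0,1]$ because $A$ is a nonatomic standard measure space. Set
\begin{equation*}
U=\Theta^{-1}(M_AS\jmath_A)^{-1}M_A,\qquad V=\jmath_A\Theta .
\end{equation*}
Then $USV=I_{L_1[0,1]}$ and $\norm{U}\norm{V}\leq 8$.

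Thus $L_1[0,1]$ has the $8$-PFP. The only delicate point is the claim in Step 1 that the Kalton family of $I-R$ is $\delta_t-\nu_t$, together with the measurability of $E_a$. Both are part of the statement of Kalton's representation theorem as cited, so the corollary follows.
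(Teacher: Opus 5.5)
Your proposal is correct and follows essentially the same route as the paper: Kalton's representation, the observation that $I-R$ corresponds to $(\delta_t-\nu_t)$ so one of the two atoms at $t$ is large on a set of positive measure, Capon localisation, and transport of $L_1(A)$ back to $L_1[0,1]$. The only difference is cosmetic. The paper uses $a=1/3$, $\varepsilon=1/6$, giving inverse norm at most $6$, whereas you use $a=1/4$, $\varepsilon=1/8$, giving constant $8$. Both thresholds work, since one of $|\nu_t(\{t\})|$ and $|1-\nu_t(\{t\})|$ is always at least $1/2$.
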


\begin{proof}
    Let $T\colon L_1[0,1]\to L_1[0,1]$ be an operator, and let $(\nu_t)_{t\in[0,1]}$ be the associated family of measures. The operator $I_{L_1[0,1]}-T$ is associated with the family $(\delta_t-\nu_t)_{t\in[0,1]}$. For every $t\in[0,1]$, either
    \begin{equation*}
        |\nu_t(\{t\})|>\frac13
    \end{equation*}
    or
    \begin{equation*}
        |(\delta_t-\nu_t)(\{t\})|>\frac13.
    \end{equation*}
    Hence one of these two alternatives holds on a set of positive measure.

    Applying \Cref{lem:capon-localisation} with $a=1/3$ and $\varepsilon=1/6$, either to $T$ or to $I_{L_1[0,1]}-T$, we obtain a positive-measure set $B\subseteq[0,1]$ such that the corresponding operator
    \begin{equation*}
        M_B T \jmath_B \colon L_1(B)\to L_1(B)
    \end{equation*}
    or
    \begin{equation*}
        M_B(I_{L_1[0,1]}-T)\jmath_B\colon L_1(B)\to L_1(B)
    \end{equation*}
    is an isomorphism onto $L_1(B)$, whose inverse has norm at most $6$. Since $L_1(B)$ is isometric to $L_1[0,1]$, the identity on $L_1[0,1]$ factors through either $T$ or $I_{L_1[0,1]}-T$ with a constant independent of $T$. Therefore, $L_1[0,1]$ has the UPFP.
\end{proof}

The next lemma is the one-sided localisation step used in the uncountable diagonal reduction. It is analogous in spirit to the forward reductions from \Cref{sec:upper-triangular}: after passing to a suitable positive-measure part of the domain, all but finitely many target coordinates of the operator become small. In the uncountable setting, this finite exceptional set can later be removed simultaneously along a large set of coordinates by Hajnal's free set theorem.

\begin{lemma}[Finite interference]\label{lem:finite-capture}
Let $\Lambda$ be a set and $S\colon L_1[0,1]\to \ell_1(\Lambda,L_1[0,1])$ be an operator. Then, for every $\eta>0$, there are a finite set $F\subseteq\Lambda$ and a measurable set $B\subseteq[0,1]$ of positive measure such that
\begin{equation*}
    \norm{P_{\Lambda\setminus F}S \jmath_B}<\eta.
\end{equation*}
\end{lemma}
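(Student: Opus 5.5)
First reduce to countably many coordinates. Since $L_1[0,1]$ is separable, the image $S(L_1[0,1])$ is separable. Each vector in $\ell_1(\Lambda,L_1[0,1])$ has countable support, so there is a countable set $\Lambda_0\subseteq\Lambda$ with $P_{\Lambda\setminus\Lambda_0}S=0$. Enumerate $\Lambda_0=\{\lambda_1,\lambda_2,\dots\}$. For each $k$ put $R_k=P_{\{\lambda_j:j>k\}}S$, an operator from $L_1[0,1]$ into $\ell_1(L_1[0,1])$. It then suffices to find $k$ and a set $B$ of positive measure with $\norm{R_k\jmath_B}<\eta$, and to take $F=\{\lambda_1,\dots,\lambda_k\}$.

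The key tool is Kalton's representation. Compose $S$ with the canonical map $\ell_1(L_1[0,1])\to L_1([0,1]\times\N)\cong L_1[0,1]$. This produces a kernel $t\mapsto\mu_t$, a weak$^*$-measurable family of measures on $[0,1]\times\N$ with $\sup_t\norm{\mu_t}\le\norm{S}$ (more precisely, $\operatorname{ess\,sup}$). For every $f\in L_1$ we have $(Sf)$ represented by $\int f(t)\,d\mu_t\,dt$ in the appropriate sense. Moreover,
\[
\norm{R_k\jmath_B}\le\operatorname{ess\,sup}_{t\in B}|\mu_t|([0,1]\times\{j>k\}),
\]
because $\norm{R_k\jmath_Bf}\le\int_B|f(t)|\,|\mu_t|([0,1]\times\{j>k\})\,dt$. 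Now set $g_k(t)=|\mu_t|([0,1]\times\{j>k\})$. For each $t$ this tends to $0$ as $k\to\infty$, since $\mu_t$ is a finite measure. By Egorov's theorem, or more simply because the sets $\{g_k<\eta/2\}$ increase to a set of full measure, some $k$ has $B=\{g_k<\eta/2\}$ of positive measure. That $B$ gives $\norm{R_k\jmath_B}\le\eta/2<\eta$.

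The main obstacle is justifying the representation. Kalton's theorem (Theorem 3.1 in \cite{Kalton1978}) requires measurability of $t\mapsto|\mu_t|(\cdot)$ and the norm bound $\int_B|f|\,g_k$. Both should follow from Kalton's statement that $\norm{Sf}\le\int|f(t)|\,\norm{\mu_t}\,dt$, applied to $S$ composed with the band projection onto the coordinates $\{j>k\}$. That projection is itself represented by the restricted measures $\mu_t|_{[0,1]\times\{j>k\}}$.

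A cruder fallback avoids kernels and works directly. For indicator functions $f=\mathbf 1_C/|C|$ the quantity $\norm{R_kf}$ decreases to $0$ for each fixed $f$. The difficulty is uniformity in $f$, and that is exactly what the pointwise kernel supplies.
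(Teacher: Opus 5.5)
Your proof is correct. Its skeleton matches the paper's: reduce to countably many coordinates, find a tail density $g$, and take $B$ to be a sublevel set of $g$. The difference is that you use the transposed kernel.

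The paper uses Kalton's representation $(Sf)(y)=\int f\,d\nu_y$. Here the measures live on the \emph{domain} $[0,1]$ and are indexed by points $y$ of the target $\Omega$. The paper must then build the tail measure $\rho_N(E)=\int_{\Omega_N}|\nu_y|(E)\,dm(y)$ and use Kalton's estimate to get $\rho_N\leq\norm{S}\mu$. It then passes to the Radon--Nikodym density $g_N$ and argues via $\rho_N([0,1])\to0$.

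Your kernel $t\mapsto\mu_t$ consists of measures on the \emph{target}, indexed by domain points. So the tail function $g_k(t)=|\mu_t|([0,1]\times\{j>k\})$ is available directly. It is bounded by $\norm{S}$ and decreases to $0$ pointwise by countable additivity of each $\mu_t$, so no Radon--Nikodym step is needed. In fact your $g_k$ agrees a.e.\ with the paper's density: both arise from disintegrating the same measure on $[0,1]\times\Omega$ along the two factors.

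One correction concerns attribution. Your kernel, and the inequality $\norm{Sf}\leq\int|f(t)|\,\norm{\mu_t}\,dt$, do not come from Kalton's Theorem~3.1, which gives the other kernel. The right source is the Dunford--Pettis theorem. Regard $S$ as an operator into $L_1(\Omega)\subseteq M(\Omega)=C_0(\Omega)^*$, where $C_0(\Omega)$ is separable. Then $S$ is given by an essentially bounded weak$^*$-measurable map $t\mapsto\mu_t\in M(\Omega)$ with $\operatorname{ess\,sup}_t\norm{\mu_t}=\norm{S}$.

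With this in hand, the remaining details go through:
\begin{itemize}
\item Measurability of $g_k$ follows by taking the supremum of $|\langle\mu_t,\phi\rangle|$ over a countable dense subset of the unit ball of $C_0([0,1]\times\{j>k\})$.
\item Since $[0,1]\times\{j>k\}$ is clopen, restriction to it is weak$^*$-continuous on $M(\Omega)$. It therefore commutes with the weak$^*$-integral, which gives $\norm{R_k\jmath_Bf}\leq\int_B|f|\,g_k$.
\end{itemize}

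With that citation, your route is slightly more elementary than the paper's. The paper's route has the advantage of using the same Kalton kernel that underlies Capon's localisation elsewhere in the section. Your final ``fallback'' paragraph is unnecessary.
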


\begin{proof}
We first prove the result when $\Lambda$ is countable and thus we may assume without loss of generality that $\Lambda=\N$. Identify $\ell_1(\N,L_1[0,1])$ with $L_1(\Omega,m)$, where $\Omega=\N\times[0,1]$ and $m$ is the product of the counting measure on $\N$ and the Lebesgue measure on $[0,1]$. Let $\mu$ denote Lebesgue measure on $[0,1]$. By Kalton's representation theorem \cite[Theorem 3.1(b)]{Kalton1978}, there is a measurable family $(\nu_y)_{y\in\Omega}$ of finite measures on $[0,1]$ such that
\begin{equation*}
    (Sf)(y)=\int_{[0,1]} f\,d\nu_y
\end{equation*}
for almost every $y\in\Omega$.

For $N\in\N$, put $F_N=\{1,\ldots,N\}$ and $\Omega_N=(\N\setminus F_N)\times[0,1]$. Define a finite positive measure $\rho_N$ on $[0,1]$ by
\begin{equation*}
    \rho_N(E)=\int_{\Omega_N}|\nu_y|(E)\,dm(y).
\end{equation*}
By Kalton's representation theorem \cite[Theorem~3.1(b)]{Kalton1978}, applied to $P_{\N\setminus F_N}S$, and using the estimate \cite[(3.1.1)]{Kalton1978}, we have
\begin{equation*}
    \rho_N(E)\leq\norm{P_{\N\setminus F_N}S}\mu(E)\leq\norm{S}\mu(E)
\end{equation*}
for every measurable $E\subseteq[0,1]$. Hence $\rho_N$ is absolutely continuous with respect to $\mu$.

Let $g_N$ be the Radon--Nikodym derivative of $\rho_N$ with respect to $\mu$. Since $g_N$ is finite almost everywhere, the set
\begin{equation*}
    B=\{t\in[0,1]:g_N(t)<\eta/2\}
\end{equation*}
has positive measure for some $N\in\N$. Indeed, otherwise $g_N\geq\eta/2$ almost everywhere for every $N$, and hence
\begin{equation*}
    \rho_N([0,1])=\int_0^1 g_N\,d\mu\geq\eta/2\qquad(N\in\N),
\end{equation*}
which contradicts that $\rho_N([0,1])\to 0$ as $N \to \infty$. The latter convergence follows from $\Omega_N\downarrow\varnothing$ and the integrability of $y\mapsto|\nu_y|([0,1])$ over $\Omega$.

It remains to estimate the norm of $P_{\N\setminus F_N}S \jmath_B$. Let $f\in L_1(B)$. Then, for almost all $y\in\Omega_N$, we have
\begin{equation*}
    (P_{\N\setminus F_N}S \jmath_B f)(y)=\int_B f\,d\nu_y.
\end{equation*}
Hence, by Tonelli's theorem and the definition of $\rho_N$,
\begin{equation*}
    \norm{P_{\N\setminus F_N}S \jmath_B f}\leq\int_{\Omega_N}\int_B |f|\,d|\nu_y|\,dm(y)=\int_B |f|\,d\rho_N.
\end{equation*}
Since $d\rho_N=g_N\,d\mu$ and $g_N<\eta/2$ on $B$, we get
\begin{equation*}
    \int_B |f|\,d\rho_N=\int_B |f|g_N\,d\mu\leq(\eta/2)\norm{f}.
\end{equation*}
Therefore
\begin{equation*}
    \norm{P_{\N\setminus F_N}S \jmath_B}<\eta.
\end{equation*}

Now let $\Lambda$ be arbitrary. Since $L_1[0,1]$ is separable, choose a dense sequence $(f_n)_{n\in\N}$ in $L_1[0,1]$. For each $n$, the vector $Sf_n$ has countable support in $\Lambda$, so
\begin{equation*}
    \Lambda_0=\bigcup_{n=1}^\infty\supp(Sf_n)
\end{equation*}
is countable. If $\lambda\in\Lambda\setminus\Lambda_0$, then $P_\lambda Sf_n=0$ for every $n$, and hence $P_\lambda S=0$ by continuity. Thus
\begin{equation*}
    S(L_1[0,1])\subseteq \ell_1(\Lambda_0,L_1[0,1]).
\end{equation*}
Applying the countable case to $S$ regarded as an operator into $\ell_1(\Lambda_0,L_1[0,1])$, we obtain a finite set $F\subseteq\Lambda_0$ and a measurable set $B\subseteq[0,1]$ of positive measure such that
\begin{equation*}
    \norm{P_{\Lambda_0\setminus F}S \jmath_B}<\eta.
\end{equation*}
Since $S$ has no coordinates outside $\Lambda_0$, this is the same as
\begin{equation*}
    \norm{P_{\Lambda\setminus F}S \jmath_B}<\eta.
\end{equation*}
The proof is complete.
\end{proof}

Before we proceed to the diagonal reduction, we recall the following finite-valued form of Hajnal's free set theorem \cite{Hajnal1961}.

\begin{lemma}[Finite free set]\label{lem:free}
Let $\Gamma$ be an uncountable set, and let $F_\gamma\subseteq\Gamma\setminus\{\gamma\}$ be finite for each $\gamma\in\Gamma$. Then there is a set $H\subseteq\Gamma$ such that $|H|=|\Gamma|$ and, for every $\gamma \in H$, we have
\begin{equation*}
    F_\gamma\cap H=\varnothing.
\end{equation*}
\end{lemma}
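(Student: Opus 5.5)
The plan is to split the argument according to whether the sizes $|F_\gamma|$ can be bounded on a large part of $\Gamma$. Put $\kappa=|\Gamma|$ and, for $k\in\N_0$, $\Gamma_k=\{\gamma\in\Gamma:|F_\gamma|=k\}$, so that $\Gamma=\bigcup_k\Gamma_k$. The basic tool for each piece will be a \emph{bounded-order} statement.

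\emph{Bounded order.} Suppose $Y\subseteq\Gamma$ is infinite and $|F_\gamma|\leq k$ for all $\gamma\in Y$. I would then show that $Y$ contains a free set of size $|Y|$, meaning a set $H$ with $F_\gamma\cap H=\varnothing$ for every $\gamma\in H$. Consider the graph $G$ on $Y$ in which $\gamma$ and $\delta$ are adjacent when $\delta\in F_\gamma$ or $\gamma\in F_\delta$. A finite induced subgraph on $n$ vertices has at most $kn$ edges, so it has a vertex of degree at most $2k$. Hence every finite subgraph is $2k$-degenerate and therefore $(2k+1)$-colourable. By the de Bruijn--Erdős compactness theorem, $G$ itself is $(2k+1)$-colourable. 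Some colour class must have cardinality $|Y|$, because a finite union of sets of smaller cardinality has smaller infinite cardinality. That colour class is independent in $G$, which is exactly the statement that it is free.

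\emph{Case $\operatorname{cf}\kappa>\omega$.} Some $\Gamma_k$ then has cardinality $\kappa$, and the bounded-order step applied to $Y=\Gamma_k$ finishes the proof.

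\emph{Case $\operatorname{cf}\kappa=\omega$.} This is the step I expect to be the main obstacle, because no single $\Gamma_k$ need be large; we only know $\sup_k|\Gamma_k|=\kappa$. I would do the following.
\begin{enumerate}
\item Choose regular uncountable cardinals $\lambda_1<\lambda_2<\cdots$ with supremum $\kappa$, for instance successor cardinals. Choose indices $k_j$ with $|\Gamma_{k_j}|>\lambda_j$.
\item At stage $j$, let $A_j=\bigcup_{i<j}\bigcup_{h\in H_i}(F_h\cup\{h\})$, which has size less than $\lambda_j$. Remove $A_j$ from $\Gamma_{k_j}$.
\item Apply the bounded-order step to what remains to obtain a free set $H'_j$, and shrink it to size $\lambda_j$.
\item Each $h\in H'_j$ determines the finite trace $F_h\cap\bigcup_{i<j}H_i$. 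This trace is a finite subset of a set of size less than $\lambda_j$, and $\lambda_j$ is regular uncountable. So there are $\lambda_j$ many $h$ sharing a common trace $S_j$; let $H_j$ be the set of these $h$.
\item Delete the finite set $S_j$ from the earlier sets $H_i$, $i<j$.
\end{enumerate}
Each $H_i$ loses only countably many points over all later stages. Since $\lambda_i$ is uncountable, the final sets $\widetilde H_i$ still have size $\lambda_i$. I would take $H=\bigcup_i\widetilde H_i$, which has size $\sup_i\lambda_i=\kappa$.

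\emph{Checking $H$ is free.} Take distinct $h,h'\in H$.
\begin{itemize}
\item If $h,h'$ lie in the same $\widetilde H_j$, then neither belongs to the other's $F$-set, by freeness of $H'_j$.
\item Suppose $h\in\widetilde H_j$ and $h'\in\widetilde H_i$ with $i<j$. Then $h\notin F_{h'}$, because $A_j$ was removed at stage $j$. Also $h'\notin F_h$: any point of $F_h$ lying in $H_i$ belongs to $S_j$, and $S_j$ was deleted from $H_i$.
\end{itemize}
The delicate bookkeeping is in the removals: removing $A_j$ handles the pointers from earlier sets into later ones, and the common-trace refinement handles the pointers from later sets back into earlier ones. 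Regularity and uncountability of each $\lambda_j$ are what make both of these affordable.
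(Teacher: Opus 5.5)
Your proof is correct, and it takes a different route from the paper. The paper gives no argument for this lemma; it simply quotes the finite-order case of Hajnal's free set theorem \cite{Hajnal1961}.

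You prove this instance from scratch, in three steps:
\begin{enumerate}
\item \emph{Bounded order.} You use $2k$-degeneracy of the associated graph together with the de Bruijn--Erd\H{o}s compactness theorem.
\item \emph{Uncountable cofinality.} When $\operatorname{cf}|\Gamma|>\omega$, pigeonhole reduces to the bounded-order case.
\item \emph{Cofinality $\omega$.} You run an $\omega$-stage construction along regular cardinals $\lambda_j$. Removing $A_j$ eliminates pointers from earlier blocks into later ones. The common-trace refinement eliminates pointers backwards, and the countably many deletions are absorbed because $\lambda_i>\aleph_0$.
\end{enumerate}
The key estimates check out: $|A_j|\le\lambda_{j-1}<\lambda_j$, and there are at most $\lambda_{j-1}$ possible traces, so regularity of $\lambda_j$ gives a common trace. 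Your text leaves ambiguous whether traces and deletions refer to the original or the already-pruned $H_i$, but the argument works under either reading.

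The citation buys brevity and more generality. Hajnal's theorem allows $|F_\gamma|<\lambda$ for any fixed $\lambda<|\Gamma|$, which the paper does not need.

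Your route is self-contained and shows exactly where uncountability enters. The bounded-order step works on any infinite set. Uncountability is used only to pass from unbounded to bounded finite orders, and the mapping $F_n=\{0,\dots,n-1\}$ on $\N$ shows this cannot be avoided. It also shows that the paper's main application needs only your first two steps. There $|\Gamma|=2^{\aleph_0}$, which has uncountable cofinality by K\"onig's theorem. The cofinality-$\omega$ bookkeeping is needed only for the full ``every set $\Gamma$'' statement of \Cref{th:main3}.
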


We are now ready to obtain the analogue, in the present uncountable setting, of the diagonal reductions proved in \Cref{sec:diagonal-and-proofs}. The key point is that the finite interference lemma gives a one-sided reduction for each column, while Hajnal's free set theorem removes the finitely many exceptional coordinates simultaneously on a large subset of $\Gamma$.

\begin{lemma}[Diagonal reduction]\label{lem:uncountable-diagonal-reduction}
Let $\Gamma$ be an uncountable set, and let $T\colon \ell_1(\Gamma,L_1[0,1])\to \ell_1(\Gamma,L_1[0,1])$ be an operator. Then, for every $\varepsilon>0$, there are a set $H\subseteq\Gamma$ with $|H|=|\Gamma|$, measurable sets $B_\gamma\subseteq[0,1]$ of positive measure for $\gamma\in H$, contractions
\begin{equation*}
    V\colon \left(\bigoplus_{\gamma\in H}L_1(B_\gamma)\right)_{\ell_1}\to \ell_1(H,L_1[0,1])
\end{equation*}
and
\begin{equation*}
    U\colon \ell_1(H,L_1[0,1])\to \left(\bigoplus_{\gamma\in H}L_1(B_\gamma)\right)_{\ell_1}
\end{equation*}
such that
\begin{equation*}
    UV=I_{\bigl(\bigoplus_{\gamma\in H}L_1(B_\gamma)\bigr)_{\ell_1}},
\end{equation*}
and a diagonal operator $D$ on $\bigl(\bigoplus_{\gamma\in H}L_1(B_\gamma)\bigr)_{\ell_1}$ such that
\begin{equation*}
    \norm{U P_H T J_H V-D}\leq\varepsilon.
\end{equation*}
\end{lemma}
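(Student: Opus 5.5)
For each $\gamma\in\Gamma$ I will apply the finite interference lemma (\Cref{lem:finite-capture}) to the column $S_\gamma=T J_\gamma\colon L_1[0,1]\to\ell_1(\Gamma,L_1[0,1])$ with $\eta=\varepsilon$ (or $\varepsilon/2$). This gives a finite set $F'_\gamma\subseteq\Gamma$ and a positive-measure set $B_\gamma\subseteq[0,1]$ with
\begin{equation*}
    \norm{P_{\Gamma\setminus F'_\gamma}TJ_\gamma\jmath_{B_\gamma}}<\varepsilon.
\end{equation*}
Put $F_\gamma=F'_\gamma\setminus\{\gamma\}$, which is a finite subset of $\Gamma\setminus\{\gamma\}$. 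Hajnal's theorem in the form of \Cref{lem:free} then yields $H\subseteq\Gamma$ with $|H|=|\Gamma|$ and $F_\gamma\cap H=\varnothing$ for all $\gamma\in H$. Consequently, for $\gamma\in H$, every $\beta\in H\setminus\{\gamma\}$ lies in $\Gamma\setminus F'_\gamma$, and so
\begin{equation*}
    \norm{P_{H\setminus\{\gamma\}}TJ_\gamma\jmath_{B_\gamma}}<\varepsilon.
\end{equation*}
This controls the whole off-diagonal part of each column at once. No two-sided selection is needed, since in an $\ell_1$-sum the norm of an operator is the supremum over columns.

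Next I define the contractions. Let $V$ send $(f_\gamma)_{\gamma\in H}$ to $\sum_{\gamma}J_\gamma\jmath_{B_\gamma}f_\gamma$; this is an isometry into $\ell_1(H,L_1[0,1])$. Let $U$ send $(g_\gamma)_{\gamma\in H}$ to $(M_{B_\gamma}g_\gamma)_{\gamma\in H}$; this is a contraction, and clearly $UV=I$. Take $D=\diag(M_{B_\gamma}P_\gamma TJ_\gamma\jmath_{B_\gamma}:\gamma\in H)$. It is bounded by $\norm{T}$ and is precisely the diagonal part of $UP_HTJ_HV$.

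For the estimate, consider $x=(f_\gamma)$ finitely supported. Then
\begin{equation*}
    (UP_HTJ_HV-D)x=\sum_\gamma U P_{H\setminus\{\gamma\}}TJ_\gamma\jmath_{B_\gamma}f_\gamma,
\end{equation*}
where $P_{H\setminus\{\gamma\}}$ is viewed as landing in $\ell_1(H,\cdot)$ with the $\gamma$-coordinate set to zero. Its norm is at most $\sum_\gamma\varepsilon\norm{f_\gamma}=\varepsilon\norm{x}$. Density then gives $\norm{UP_HTJ_HV-D}\leq\varepsilon$, which is the required non-strict inequality.

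The substantive input is the finite interference lemma, which is already proved. The only delicate points are, first, choosing the $F_\gamma$ so that they exclude $\gamma$ itself, as \Cref{lem:free} requires, and second, checking that the column-wise estimate really bounds the full off-diagonal operator. The second point holds because the domain carries the $\ell_1$-sum norm and the columns have been restricted to $L_1(B_\gamma)$. I do not expect a serious obstacle beyond applying Hajnal's theorem correctly for arbitrary uncountable $\Gamma$.
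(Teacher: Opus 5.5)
Your proposal is correct and follows essentially the same route as the paper. Both arguments apply finite interference to each column, use Hajnal's free set theorem to obtain $H$, take the same pair $U,V$ and diagonal $D$, and estimate column by column in the $\ell_1$-norm. The only cosmetic difference is that you apply \Cref{lem:finite-capture} to the full column $TJ_\gamma$ and then delete $\gamma$ from the exceptional set, whereas the paper applies it directly to $P_{\Gamma\setminus\{\gamma\}}TJ_\gamma$; both yield $\norm{P_{H\setminus\{\gamma\}}TJ_\gamma\jmath_{B_\gamma}}<\varepsilon$ for every $\gamma\in H$.
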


\begin{proof}
For each $\gamma\in\Gamma$, define
\begin{equation*}
    S_\gamma=P_{\Gamma\setminus\{\gamma\}}TJ_\gamma\colon L_1[0,1]\to\ell_1(\Gamma\setminus\{\gamma\},L_1[0,1]).
\end{equation*}
Apply \Cref{lem:finite-capture} to $S_\gamma$, with $\eta = \varepsilon$. We obtain a finite set $F_\gamma\subseteq\Gamma\setminus\{\gamma\}$ and a measurable set $B_\gamma\subseteq[0,1]$ of positive measure such that
\begin{equation*}
    \norm{P_{\Gamma\setminus(\{\gamma\}\cup F_\gamma)}TJ_\gamma \jmath_{B_\gamma}}<\varepsilon.
\end{equation*}
By \Cref{lem:free}, there is a set $H\subseteq\Gamma$ with $|H|=|\Gamma|$ such that, for all $\gamma \in H$, we have
\begin{equation*}
    F_\gamma\cap H=\varnothing.
\end{equation*}
Hence, for every $\gamma\in H$,
\begin{equation*}
    \norm{P_{H\setminus\{\gamma\}}TJ_\gamma \jmath_{B_\gamma}}<\varepsilon.
\end{equation*}

Put
\begin{equation*}
    Y=\left(\bigoplus_{\gamma\in H}L_1(B_\gamma)\right)_{\ell_1}.
\end{equation*}
Define $V\colon Y\to\ell_1(H,L_1[0,1])$ and $U\colon\ell_1(H,L_1[0,1])\to Y$ coordinatewise, for each $\gamma\in H$, by
\begin{equation*}
    (Vy)_\gamma=\jmath_{B_\gamma}y_\gamma,\qquad (Ux)_\gamma=M_{B_\gamma}x_\gamma.
\end{equation*}
Then $U$ and $V$ are contractions, and $UV=I_Y$.

Let $D\colon Y\to Y$ be the diagonal operator given, for each $\gamma\in H$, by
\begin{equation*}
    (Dy)_\gamma=M_{B_\gamma}P_\gamma TJ_\gamma \jmath_{B_\gamma}y_\gamma.
\end{equation*}
Estimating by columns and using $F_\beta\cap H=\varnothing$, we obtain, for $y=(y_\beta)_{\beta\in H}\in Y$,
\begin{equation*}
    \norm{(UP_HTJ_HV-D)y}\leq\sum_{\beta\in H}\norm{P_{H\setminus\{\beta\}}TJ_\beta \jmath_{B_\beta}y_\beta}\leq\varepsilon\norm{y}.
\end{equation*}
Therefore
\begin{equation*}
    \norm{U P_H T J_H V-D}\leq\varepsilon.
\end{equation*}
\end{proof}

Once the operator has been reduced to diagonal form up to a small perturbation, the factorisation argument is the same as in the proof of the main transfer theorems. We first record the corresponding diagonal dichotomy for the localised $\ell_1$-sum.

\begin{proposition}[Diagonal dichotomy]\label{prop:uncountable-diagonal-dichotomy}
Let $C\geq1$, let $H$ be an uncountable set, and for each $\alpha\in H$ let $B_\alpha\subseteq[0,1]$ be measurable with positive measure. Suppose that $L_1[0,1]$ has the $C$-PFP. Put
\begin{equation*}
    Y=\left(\bigoplus_{\alpha\in H}L_1(B_\alpha)\right)_{\ell_1}.
\end{equation*}
Then, for every diagonal operator $D\colon Y\to Y$, either $D$ or $I_Y-D$ factors the identity on $Y$ with constant $C$.
\end{proposition}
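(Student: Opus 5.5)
The plan is to mimic the proof of \Cref{prop:diagonal-dichotomy}, using a cardinality pigeonhole in place of passing to an infinite subset of $\N$. Write $D=\diag(D_\alpha:\alpha\in H)$ with $D_\alpha\in\mathscr{B}(L_1(B_\alpha))$; by boundedness of $D$, $\sup_\alpha\norm{D_\alpha}\le\norm{D}<\infty$. Each $L_1(B_\alpha)$ is isometrically isomorphic to $L_1[0,1]$, because $B_\alpha$ is a nonatomic measure space of positive finite measure and we may rescale the measure by the normalised measure-preserving identification. The $C$-PFP transfers through this isometry $\sigma_\alpha\colon L_1[0,1]\to L_1(B_\alpha)$ with the same constant. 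Indeed, we apply the $C$-PFP to $\sigma_\alpha^{-1}D_\alpha\sigma_\alpha$ and conjugate back. Hence, for each $\alpha$, either $I_{L_1(B_\alpha)}$ factors through $D_\alpha$ with constant $C$, or it factors through $I_{L_1(B_\alpha)}-D_\alpha$ with constant $C$.

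Next, we partition $H=H_1\cup H_2$ according to which alternative holds, choosing one arbitrarily if both hold. At least one piece has cardinality $|H|$, since $|H|$ is infinite and $|H_1|+|H_2|=\max(|H_1|,|H_2|)$. Call this piece $H'$; say it is $H_1$, the other case being symmetric with $I_Y-D$ replacing $D$. For $\alpha\in H'$, pick $\Phi_\alpha,\Psi_\alpha$ with $\Phi_\alpha D_\alpha\Psi_\alpha=I_{L_1(B_\alpha)}$, and rescale so that $\norm{\Phi_\alpha}\le C$ and $\norm{\Psi_\alpha}\le1$. These define bounded diagonal operators $\Phi,\Psi$ on $Y'=(\bigoplus_{\alpha\in H'}L_1(B_\alpha))_{\ell_1}$, and $\Phi P_{H'}DJ_{H'}\Psi=I_{Y'}$, where $P_{H'}$ and $J_{H'}$ are the coordinate projection and inclusion, both contractive. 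In particular, the identity of $Y'$ factors through $D$ with constant $C$.

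The remaining step, and the only real point beyond the countable case, is to get the identity of $Y$ itself rather than that of $Y'$. For this, choose a bijection $\pi\colon H\to H'$ and isometries $\tau_\alpha\colon L_1(B_\alpha)\to L_1(B_{\pi(\alpha)})$, which exist since all these spaces are isometric to $L_1[0,1]$. These induce an isometric isomorphism $W\colon Y\to Y'$, $(Wy)_{\pi(\alpha)}=\tau_\alpha y_\alpha$. Then
\begin{equation*}
    (W^{-1}\Phi P_{H'})\,D\,(J_{H'}\Psi W)=I_Y,
\end{equation*}
with $\norm{W^{-1}\Phi P_{H'}}\,\norm{J_{H'}\Psi W}\le C$. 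This gives the factorisation with constant $C$. I expect no serious obstacle. One only has to check that the isometries $L_1(B)\cong L_1[0,1]$ preserve factorisation constants, which is immediate, and that the cardinality pigeonhole is valid for infinite $H$.
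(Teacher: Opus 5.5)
Your proof is correct and follows the same route as the paper, which simply refers back to the proof of \Cref{prop:diagonal-dichotomy} together with the isometries $L_1(B_\alpha)\cong L_1[0,1]$. You also make explicit two points the paper leaves implicit: the pigeonhole must produce a subset $H'$ with $|H'|=|H|$, not merely an infinite one, and the reindexing isometry $W\colon Y\to Y'$ is what converts the factorisation of $I_{Y'}$ into one of $I_Y$ with the same constant $C$.
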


\begin{proof}
The proof is identical to that of \Cref{prop:diagonal-dichotomy}, using that each $L_1(B_\alpha)$ is isometric to $L_1[0,1]$.
\end{proof}

We are finally ready for the proof of \Cref{th:main3}.

\begin{proof}[Proof of \Cref{th:main3}]
If $\Gamma$ is finite or countably infinite, the result follows from \Cref{cor:L1-has-UPFP}, since $\ell_1(\Gamma,L_1[0,1])$ is isomorphic to $L_1[0,1]$. We may therefore suppose that $\Gamma$ is uncountable.

Let $C\geq1$ be such that $L_1[0,1]$ has the $C$-PFP, and choose $\delta>0$ such that $C\delta<1/2$. Let $T\colon \ell_1(\Gamma,L_1[0,1])\to \ell_1(\Gamma,L_1[0,1])$ be an operator.

Applying \Cref{lem:uncountable-diagonal-reduction} with $\varepsilon = \delta/2$, we obtain a set $H\subseteq\Gamma$ with $|H|=|\Gamma|$, measurable sets $B_\gamma\subseteq[0,1]$ of positive measure for $\gamma \in H$, contractions
\begin{equation*}
    V\colon Y\to \ell_1(H,L_1[0,1]),\qquad U\colon \ell_1(H,L_1[0,1])\to Y,
\end{equation*}
where
\begin{equation*}
    Y=\left(\bigoplus_{\gamma\in H}L_1(B_\gamma)\right)_{\ell_1},
\end{equation*}
such that $UV=I_Y$, and a diagonal operator $D\colon Y\to Y$ satisfying
\begin{equation*}
    \norm{U P_H T J_H V-D} \leq \delta/2 < \delta.
\end{equation*}
By \Cref{prop:uncountable-diagonal-dichotomy}, either $D$ or $I_Y-D$ factors the identity on $Y$ with constant $C$.

Suppose first that $D$ factors the identity on $Y$ with constant $C$. Since
\begin{equation*}
    C\norm{U P_H T J_H V-D}<1/2,
\end{equation*}
\Cref{lmm:factorisation-perturbation} implies that $U P_H T J_H V$ factors the identity on $Y$ with constant $2C$. Choose an isometric isomorphism
\begin{equation*}
    W\colon \ell_1(\Gamma,L_1[0,1])\to Y,
\end{equation*}
which exists because $|H|=|\Gamma|$ and each $L_1(B_\gamma)$ is isometric to $L_1[0,1]$. Since $W$ and $W^{-1}$ are isometries, it follows that $T$ factors the identity on $\ell_1(\Gamma,L_1[0,1])$ with constant $2C$.

Suppose instead that $I_Y-D$ factors the identity on $Y$ with constant $C$. Since $UV=I_Y$, we have
\begin{equation*}
    I_Y-U P_H T J_H V=U P_H(I_{\ell_1(\Gamma,L_1[0,1])}-T)J_H V.
\end{equation*}
Moreover,
\begin{equation*}
    \norm{U P_H(I_{\ell_1(\Gamma,L_1[0,1])}-T)J_H V-(I_Y-D)}<\delta.
\end{equation*}
Thus, arguing as before we conclude that $I_{\ell_1(\Gamma,L_1[0,1])} - T$ factors the identity with constant $2C$, which proves that $\ell_1(\Gamma, L_1[0,1])$ has the UPFP.

Lastly, by a classical result \cite[Proposition~5.6]{LT2}, the space $C[0,1]^*$ is isomorphic to $\ell_1(2^{\aleph_0},L_1[0,1])$. Hence, by the previous argument, $C[0,1]^*$ has the UPFP, and therefore $C[0,1]^*$ is primary.
\end{proof}
\bigskip
\section{Final discussion and open questions}\label{sec:remarks-and-questions}

The spaces $\ell_1$, $c_0$ and $\ell_\infty$, especially the latter, have historically been among the more delicate settings for factorisation and primariness arguments, often requiring arguments different from those used for $\ell_p$, $1<p<\infty$, or even for sums with respect to arbitrary symmetric bases. Somewhat surprisingly, the methods presented here can only be directly applied in these cases. The reason is that the geometry of $\ell_1$ and of the supremum norm gives enough control over coordinate restrictions to obtain both upper- and lower-triangular reductions.

For $\ell_p$-sums with $1<p<\infty$, the situation is less clear. One can often obtain one-sided reductions, either upper or lower triangular, under natural hypotheses, but obtaining both simultaneously seems to require substantially stronger assumptions. This obstruction is not absolute: in \cite{Acuaviva2026PrimarinesslpCK}, a two-sided reduction was obtained for $\ell_p$-sums of certain $C(K)$ spaces. There, the upper-triangular reduction used the $c_0$-structure of the spaces, while the lower-triangular reduction relied on the special structure of $C(K)$, in particular on reproducibility properties of $K$ which force an $\ell_1$-type behaviour.

It is natural to ask whether there are general hypotheses under which analogous two-sided reductions can be proved for $\ell_p(X)$, $1<p<\infty$.

\begin{question}
    Let $1<p<\infty$. Are there natural and sufficiently broad conditions on a Banach space $X$ under which the UPFP of $X$ passes to $\ell_p(X)$?
\end{question}

\begin{remark}
    In the uncountable setting, the need for a full two-sided reduction can sometimes be avoided. Indeed, once a one-sided finite-interference estimate is available, Hajnal's free set theorem can remove the finitely many exceptional coordinates on a large subset of the index set. Thus the cotype and self-similarity mechanisms used in the countable reductions above are expected to give corresponding UPFP results for uncountable sums $\ell_p(\Gamma,X)$, $1<p<\infty$, under the analogous hypotheses.
\end{remark}

The PFP and UPFP originate in the study of primariness, so it is natural to ask whether these properties, or primariness itself, are preserved by vector-valued sequence-space constructions.

\begin{question}
    Is there a Banach space $X$ with the PFP, respectively the UPFP, such that $\ell_p(X)$, for some $1\leq p\leq\infty$, or $c_0(X)$ fails the PFP, respectively the UPFP?
\end{question}

\begin{question}
    Is there a primary Banach space $X$ such that $\ell_p(X)$, for some $1\leq p\leq\infty$, or $c_0(X)$ is not primary?
\end{question}

A counterexample to either question would likely have to be rather pathological, while a positive answer seems far beyond the reach of the present methods.

\bigskip

\noindent\textbf{Acknowledgements.} This paper forms part of the author’s PhD research at Lancaster University, conducted under the supervision of Professor N. J. Laustsen. The author is deeply grateful to Professor Laustsen for his insightful comments and valuable suggestions, which significantly improved the manuscript's presentation. He acknowledges with thanks the funding from the EPSRC (grant number EP/W524438/1) that has supported his studies. \medskip

For the purpose of open access, the author has applied a Creative Commons Attribution (CC BY) licence to any Author Accepted Manuscript version arising. \medskip

\noindent\textbf{Data availability.} No data was used for the research described in the article.


\end{document}